\documentclass[a4paper,11pt,reqno]{amsart}
\usepackage{amsfonts}
\usepackage{amsmath}
\usepackage{logicproof}
\usepackage{amssymb, amsthm}
\usepackage{tabularx}
\usepackage{caption}
\usepackage{booktabs}
\usepackage{multirow,dsfont}
\usepackage{array}
\usepackage{floatrow}
\usepackage{float}
\usepackage{floatpag}
\usepackage{centernot}
\usepackage{enumitem}
\newcolumntype{L}[1]{>{\raggedright\let\newline\\\arraybackslash\hspace{0pt}}m{#1}}
\newcolumntype{C}[1]{>{\centering\let\newline\\\arraybackslash\hspace{0pt}}m{#1}}
\newcolumntype{R}[1]{>{\raggedleft\let\newline\\\arraybackslash\hspace{0pt}}m{#1}}
\usepackage[symbol]{footmisc}
\usepackage[labelfont=bf,format=plain,justification=raggedright,singlelinecheck=false]{caption}
\usepackage{mathrsfs}
\usepackage[]{mdframed}
\usepackage[colorlinks]{hyperref}
\usepackage{tcolorbox}
\usepackage{adjustbox}
\tcbuselibrary{theorems}
\newtcbtheorem[number within=section]{mytheo}{Note}
{colback=white!5,colframe=black!35!black,fonttitle=\bfseries}{th}

\setlength{\textwidth}{15cm}
\setlength{\textheight}{22.7cm}
\setlength{\topmargin}{0mm}
\setlength{\oddsidemargin}{3mm}
\setlength{\evensidemargin}{3mm}
\setlength{\footskip}{1cm}

\numberwithin{equation}{section}

\usepackage{graphicx}
\usepackage{textgreek}

 {
      \theoremstyle{plain}
      \newtheorem{assumption}{Assumption}
  }
\newtheorem{theorem}{Theorem}[section]

\newtheorem{remark}[theorem]{Remark}

\newtheorem{lemma}[theorem]{Lemma}

\begin{document}
	
	\title[General Regularization for Functional Linear Regression]{Optimal rates for functional linear regression with General regularization}
\author[N. Gupta]{Naveen Gupta}
\address[N. Gupta]{Department of Mathematics, Indian Institute of Technology Delhi, India}
\email{ngupta.maths@gmail.com}
\author{S. Sivananthan}
\address[S. Sivananthan]{Department of Mathematics, Indian Institute of Technology Delhi, India}
\email{siva@maths.iitd.ac.in}
\author[B. K. Sriperumbudur]{Bharath K. Sriperumbudur}
\address[B. K. Sriperumbudur]{Department of Statistics, Pennsylvania State University, USA}
\email{bks18@psu.edu}

\begin{abstract}
Functional linear regression is one of the fundamental and well-studied methods in functional data analysis.  
   In this work, we investigate the functional linear regression model within the context of reproducing kernel Hilbert space by employing general spectral regularization to approximate 
   the slope function with certain smoothness assumptions.  
   We establish optimal convergence rates for estimation and prediction errors associated with the proposed method under H\"{o}lder type source condition, which generalizes and sharpens all the known results in the literature.
\end{abstract}

\keywords{Functional linear regression, reproducing kernel Hilbert space, regularization, minimax rates, integral operator, covariance operator}
	\maketitle

\section{introduction}\label{sec:intro}
Functional data analysis (FDA) deals with analyzing and extracting information from curves, known as functional data, that include growth curves, weather data, health data, etc. The functional linear regression (FLR) model introduced by 
Ramsay and Dalzell \cite{ramsay1991some} has emerged as one of the mainstays for analyzing functional data \cite{ramsay2002afda,ramsaywhendataarefunctions,kokoszka2017,reiss2017,morris2015functional,wang2016functional}.
Formally, the FLR model is stated as 
\begin{equation}
\label{model}
    Y = \int_{S} X(t) \beta^*(t)\, dt + \epsilon,
\end{equation}
where $Y(\omega)$ is a real-valued random variable, $(X(\omega,t); t\in S )$ is a continuous time process, $\epsilon$ is a zero mean random noise, independent of $X$, with finite variance $\sigma^2$ and $\beta^*$ is an unknown slope function.
Throughout the paper, we assume that $X$ and $\beta^*$ are in $L^2(S)$, $\mathbb{E}[\|X\|_{L^2}^2]$ is finite, and $S$ is a compact subset of $\mathbb{R}^d$. 
From \eqref{model}, it is easy to note that for any $s\in S$, $\mathbb{E}[YX(s)]=\mathbb{E}[\langle X,\beta^*\rangle_{L^2} X(s)]+\mathbb{E}[\epsilon X(s)]$ which implies $\mathbb{E}[XY](s)=(C\beta^*)(s)$, where $C:=\mathbb{E}[X\otimes X]$ is the covariance operator. Therefore, the slope function $\beta^*$ satisfies the operator equation $$C\beta^*=\mathbb{E}[XY],$$ which can be alternately expressed as  
\begin{equation}
\label{true solution}
    \beta^* := \arg\min_{\beta \in L^2(S)}\mathbb{E}[Y-\langle X,\beta \rangle_{L^2} ]^2.
\end{equation}
The goal is to construct an estimator $\hat{\beta}$ to estimate the unknown slope function $\beta^*$ using observed empirical data $\{(X_1,Y_1),(X_2,Y_2),\ldots, (X_n,Y_n)\}$. Here, $X_i$'s are i.i.d.~copies of the random process $X(\omega,\cdot)$, and $Y_{i}$'s are scalar responses. A general study by taking $Y_{i}$'s to be functional responses has been carried out in \cite{tong2022functionalresponses,mollenhauer2022learning,benatia2017,cuevas2002}.

One of the initial methodologies developed for estimating the slope function is functional principal component analysis (FPCA) \cite{cardot1999, muller2005generalized, cai2006prediction,li2007rates, Zhu2014}, which relies on representing the estimator of $\beta^*$ with respect to a basis, for example, a B-spline basis as in \cite{cardot2003spline}, or more popularly the eigenfunctions of the covariance operator.
Under a smoothness condition on the slope function, \cite{hall2007methodology} provided an optimal convergence rate for the FPCA method using covariance eigenfunctions. Recently, Holzleitner and Pereverzyev \cite{polynomial2023regularization} explored the FPCA method under general regularization---the aforementioned works use spectral cut-off as the regularizer---, for the functional polynomial regression model with general source condition. We refer the reader to \cite{wang2016functional, multivariateFPCA2018} and references therein for a detailed account of the FPCA approach.

A different approach to address the functional linear regression (FLR) model is inspired by the approach of kernel methods for solving inverse problems. In learning theory, it is well established that kernel methods represent predictor functions using data-driven kernel functions, yielding favorable generalization performance. A comprehensive analysis of kernel methods in the context of the kernel ridge regression problem can be found in \cite{smale2002foundation,sergeionregularization,cuckerzhou2007learningtheory,AI2022sergei,lin2020distributed_stochastic,guo2017distributed}. The application of kernel methods for kernel ridge regression in a general Hilbert space has been investigated in \cite{lin2017multipass,lin2020optimalspectral,lin2020stochastic}, encompassing classical learning theory problems and bearing a close connection to the FLR model. The key idea in these works is that since the FLR model satisfies the operator equation $C\beta^*=\mathbb{E}[XY]$, an estimator of $\beta^*$ based on samples $(X_i,Y_i)^n_{i=1}$ can be constructed as $\check{\beta}=\frac{1}{n}\sum^n_{i=1}g_\lambda(\hat{C}_n)X_iY_i$, where $\hat{C}_n=\frac{1}{n}\sum^n_{i=1}X_i\otimes_{L^2(S)}X_i:L^2(S)\rightarrow L^2(S)$ is the empirical estimator of $C$ and $g_\lambda$ is the spectral regularizer. Let $\mathscr{H}(L^2(S))$ be a reproducing kernel Hilbert space (RKHS) defined on $L^2(S)$ with $k(x,y)=\langle x,y\rangle_{L^2(S)},\,x,y\in L^2(S)$, as the reproducing kernel, i.e., a linear kernel on $L^2(S)$. Since $\mathscr{H}(L^2(S))$ is isometrically isomorphic to $L^2(S)$, it is easy to see that this approach corresponds to learning a function in $\mathscr{H}(L^2(S))$. Therefore, exploiting the general kernel regression theory for learning in RKHS,  \cite{lin2017multipass,lin2020optimalspectral,lin2020stochastic} obtained convergence rates for $\check{\beta}$ and established their minimax optimality under the standard source condition involving the integral operator, which in this case is nothing but $C$ since $k$ is a linear kernel. 
It has to be noted that this approach also closely aligns with the classical FPCA method, where the unknown slope function is expressed via the eigenfunctions of $C$.

On the other hand, in the FDA community, the operator equation $C\beta^*=\mathbb{E}[XY]$ has been approximated in an alternate way (compared to the above) to obtain an estimator of $\beta^*$. Instead of estimating $\beta^*$ through a function in $\mathscr{H}(L^2(S))$ as discussed above, the FDA literature deals with estimating $\beta^*\in L^2(S)$ through a function in $\mathcal{H}\subset L^2(S)\equiv \mathscr{H}(L^2(S))$, where $\mathcal{H}$ is an RKHS defined on $S$ through the reproducing kernel, $K$ that is free for the user to choose.  
The details of the construction of the estimator $\hat{\beta}$ are provided in Section~\ref{preliminaries}. It has to be noted that $\mathscr{H}(L^2(S))$ is determined by the covariance operator, $C$ while $\mathcal{H}$ is usually independent of $C$ and is free to be chosen by the user. Therefore, the analysis of $\hat{\beta}$ depends on both $C$ and the integral operator $T:L^2(S)\rightarrow L^2(S),\,f\mapsto \int_S K(\cdot,x)f(x)\,dx$, and their interaction. This means the convergence rates of $\hat{\beta}$ to $\beta^*$ could exhibit different behavior depending on whether $C$ and $T$ commute or not, in contrast to $\check{\beta}$. 
Hence, the analysis in this approach is not a direct consequence of the classical learning theory analysis of kernel-based nonparametric regression in RKHS. We refer the reader to Section~\ref{Sec:similar} for a detailed comparison of these approaches.

The goal of this paper is to investigate the RKHS-based approach for the FLR model, obtain optimality results for both commutative and non-commutative settings, close the gaps in the literature thereby obtaining closure on this approach, and also explore the connection to the learning theory-based RKHS approach. Before discussing our contributions, we first establish the context by discussing the results in the FDA literature for this approach.
The above-mentioned alternate RKHS approach for FLR was first proposed and investigated by Yuan and Cai \cite{ARKHSFORFLR}, wherein they developed an estimator of $\beta^*$ based on Tikhonov regularization and established its optimality under the commutativity of the integral operator, $T$ associated with the reproducing kernel, $K$ and the covariance operator, $C$, assuming $\beta^*\in\mathcal{H}$. 
Under the same assumption of $\beta^*\in\mathcal{H}$, \cite{tonyyuan2012minimax,HTONGFLR} established optimal convergence rates when the integral and covariance operators do not commute. On the other hand, Zhang et al. \cite{ZhangFaster2020} considered a source condition of $\beta^*\in\mathcal{R}(T^{\frac{1}{2}}(T^{\frac{1}{2}}CT^{\frac{1}{2}})^s)$ and derived optimal convergence rates for the estimation and prediction errors associated with the estimator for $s\in(0,1]$ and $s\in(0,\frac{1}{2}]$, respectively, where $\mathcal{R}(A)$ denotes the range space of the operator $A$. 
Balasubramanian et al. \cite{balasubramanian2024functional} considered a non-linear extension to the FLR model, called the functional single-index model $(Y = g( \langle X, \beta^*  \rangle_{L^2}) + \epsilon )$ where $g$ is a non-linear function, and showed that if $X$ is a Gaussian process with covariance operator $C$, then under appropriate regularity condition on $g$, the slope function $\beta^*$ of the single-index model satisfies the operator equation, $\mathbb{E}[XY]=C\beta^*$, which is also satisfied by the slope function of the FLR model. Therefore, they analyzed the functional single-index model by analyzing the FLR model, wherein they considered a least squares approach with Tikhonov regularization (similar to \cite{ARKHSFORFLR}) to obtain an estimator of $\beta^*$. Under the source condition of $\beta^*\in \mathcal{R}(T^\alpha)\subset L^2(S)\backslash \mathcal{H},\,\alpha\in(0,\frac{1}{2})$---note that this is weaker than that used in \cite{ARKHSFORFLR} which uses $\alpha=\frac{1}{2}$, i.e., $\beta^*\in\mathcal{R}(T^{\frac{1}{2}})=\mathcal{H}$---,  \cite{balasubramanian2024functional} established convergence rates for estimation and prediction errors in the commutative setup, extending the results of \cite{ARKHSFORFLR}. \cite{balasubramanian2024functional} also obtained convergence rates for the estimation and prediction errors in the non-commutative setting under the assumption that $\beta^*\in \mathcal{R}(T^{\frac{1}{2}}(T^{\frac{1}{2}}CT^{\frac{1}{2}})^s),\,0\le s\le1$, which partially extends the results of \cite{tonyyuan2012minimax,HTONGFLR,ZhangFaster2020}. 

In the context of the above-mentioned results, many open questions need to be addressed: (i) the rates for a large range of smoothness, i.e., $\alpha>\frac{1}{2}$ (in the commutative setting) and $s>1$ (in the non-commutative setting) is not known, (ii) the optimality of the rates is not known in the commutative setting except for $\alpha=\frac{1}{2}$, and in the non-commutative setting for $s>1$, and (iii) the impact of qualification of a general regularization family on the convergence rate is not known since all the above-mentioned works employ Tikhonov regularization.  
The main contribution of this paper is to address these limitations to provide closure on the FDA-related RKHS approach for FLR while throwing light on the relationship between this approach and the learning theory-based approach. To this end, we undertake a comprehensive examination of this approach by using a general regularization scheme (subsuming Tikhonov regularization) that caters to a wide range of smoothness and obtains optimal convergence rates for the estimation and prediction errors in the commutative and non-commutative settings. The details of these contributions are elaborated below.

\subsection{Contributions}
    \emph{(i)} We propose an estimator of $\beta^*$ based on generalized regularization that subsumes Tikhonov regularization (which was already considered in the literature) so as to understand the impact of the source condition and the qualification (of the regularization) on the convergence rates.\vspace{1.5mm}

    \emph{(ii)} Under the source condition, $\beta^* \in \mathcal{R}(T^{\alpha})$, $\alpha >0$, in Theorems \ref{comm.estimation} and \ref{predictionbasic}, we present the convergence rates for the estimation and prediction errors, respectively, in the commutative setting (i.e., $T$ and $C$ commute). These results match the optimal results of \cite{ARKHSFORFLR} when $\alpha=\frac{1}{2}$ and improve upon the rates of \cite{balasubramanian2024functional} for $0<\alpha<\frac{1}{2}$ while providing convergence rates for $\alpha>\frac{1}{2}$ but up to a smoothness index determined by the qualification of the general regularization. If the qualification is infinity, then Theorems \ref{comm.estimation} and \ref{predictionbasic} provide convergence rates for all orders of smoothness, i.e., $\alpha>0$. We would like to highlight that until this work, the convergence rates are known only for $0<\alpha\le\frac{1}{2}$ and the optimality of these rates are known only for $\alpha=\frac{1}{2}$. In this work, we not only resolve the issue of catering to $\alpha>\frac{1}{2}$ but also establish the optimality of the rates in the commutative setting for $\alpha>0$ by deriving lower bounds in Theorem~\ref{lowercomm}. A summary of our results in relation to those in the literature is captured in Table~\ref{table:1}.\vspace{1.5mm}

\begin{center}
    \begin{tabular}[t]{|C{2.9cm}|C{2.9cm}|C{4.0cm}|C{3.6cm}|}
        \hline
           &  &  \multirow{2}{*}{} & \multirow{2}{*}{} \\
          Results $(\text{Regularization})$ & Assumption & Estimation error $\|\beta^*-\hat{\beta}\|_{L^2}$ & Prediction error $\|C^{\frac{1}{2}}(\beta^*-\hat{\beta})\|^2_{L^2}$  \\
          &  &  \multirow{2}{*}{} & \multirow{2}{*}{} \\
        \hline
         \multirow{2}{*}{Cai and Yuan\cite{ARKHSFORFLR} } &\multirow{2}{*}{$\beta^* \in \mathcal{R}(T^{\frac{1}{2}})$} & \multirow{2}{*}{$\asymp n^{-\frac{t}{2(1+t+c)}}$} & \multirow{2}{*}{$\asymp n^{-\frac{t+c}{t+c+1}}$} \\
         $(\text{Tikhonov})$ & & &  \\
          \hline
          \multirow{2}{*}{Balasubramanian } & \multirow{2}{*}{$\beta^* \in \mathcal{R}(T^{\alpha} )$} & \multirow{2}{*}{$\lesssim n^{-\frac{\alpha t}{1+c+2t(1-\alpha)}}$} & \multirow{2}{*}{$ \lesssim n^{-\frac{2 \alpha t + c}{1+c + 2t(1-\alpha)}} $} \\
          et al. \cite{balasubramanian2024functional} $(\text{Tikhonov})$ &$(0 < \alpha \leq \frac{1}{2})$ & &   \\
          \hline
          \multirow{2}{*}{This paper  }&\multirow{2}{*}{$\beta^* \in \mathcal{R}(T^{\alpha})$} &\multirow{2}{*}{$ \lesssim n^{-\frac{r t}{1+c+2r t}}$} &\multirow{2}{*}{$ \lesssim n^{-\frac{2 r t + c}{1+c+2r t}}$}\\
          Theorems~\ref{comm.estimation}, \ref{predictionbasic} $(\text{General})$ &   & $r=\alpha \wedge (\nu + \frac{c}{t}(\nu-\frac{1}{2}))$
          & $r=\alpha \wedge (\nu+\frac{1}{2}+ \nu \frac{c}{t})$\\
           \hline
           \multirow{2}{*}{ }&\multirow{2}{*}{} &\multirow{2}{*}{} &\multirow{2}{*}{}\\
         This paper & $\beta^* \in \mathcal{R}(T^{\alpha})$   & $\gtrsim n^{-\frac{\alpha t}{1+c+2\alpha t}}$  & $\gtrsim n^{-\frac{2 \alpha t + c}{1+c+2\alpha t}}$  \\
           Theorem \ref{lowercomm} &  $(0< \alpha )$ &  &  \\
          \hline
    \end{tabular}
    \centering
    \captionof{table}{Convergence rates for FLR in the RKHS framework when the kernel integral operator $T$ and the covariance operator $C$ commute. Here $\nu$ denotes the qualification of the regularization and $a\wedge b:= \min\{a,b\}.$}\vspace{0.2mm}
    \label{table:1}
\end{center}
     \emph{(iii)} We provide convergence rates for the estimation and prediction errors in the non-commutative setting (i.e., without assuming the commutativity of $T$ and $C$) in Theorems~\ref{NCestimation} and \ref{predictionNC}, respectively,  under the source condition $\beta^* \in \mathcal{R}(T^{\frac{1}{2}}(T^{\frac{1}{2}}CT^{\frac{1}{2}})^s)$ where $s>0$. These results generalize those of \cite{balasubramanian2024functional,tonyyuan2012minimax,ZhangFaster2020} by investigating rates for smoothness index, $s> 1$ since the prior results are known only for $0\le s\le 1$ for the estimation error and $0 \le s\leq \frac{1}{2}$ for the prediction error. Moreover, the optimality is known only for the prediction error for $0 \le s\leq \frac{1}{2}$. We address this issue by showing the rates of Theorems~\ref{NCestimation} and \ref{predictionNC} to be optimal by deriving lower bounds in Theorem~\ref{lowerNC}. A summary of our contributions in the non-commutative setting in contrast to the existing results in the literature is provided in Table~\ref{table:2}.\vspace{1mm}\\
     \indent \emph{(iv)} A key highlight of our work is that the proposed estimator is minimax optimal both in the commutative and non-commutative settings without using the knowledge of commutativity or non-commutativity in its construction. Moreover, in terms of the source condition, the commutative setting allows us to handle the misspecified case of the true regressor being in $L^2(S)\backslash \mathcal{H}$.
     \vspace{1mm}\\
    \indent \emph{(v)} Since the slope function of a single-index model can be estimated using the FLR estimator if $X$ is Gaussian (as shown in \cite{balasubramanian2024functional}), our results discussed in \emph{(i)}--\emph{(iii)} cater to the single-index model as well, thereby sharpening and generalizing the existing results in the literature (see Remark~\ref{rem}).\vspace{1mm}\\
    \indent \emph{(vi)} When $T$ and $C$ commute, we show that the rates derived in Theorem \ref{comm.estimation} and Theorem \ref{predictionbasic} under H\"{o}lder type source condition match the convergence rates for the estimation and the prediction error achieved in \cite{lin2020optimalspectral} for the FLR model (under the learning theory framework), i.e., both the FDA and learning theory approaches are similar except for the assumptions (see Remark~\ref{rem:compare}). However, in terms of the rates, the non-commutative scenario explored in our analysis is in general not comparable to the learning theory approach (see Remark~\ref{rem:compare1}). \vspace{-1.5mm}
    
\begin{center}
    \begin{tabular}[t]{|C{3.2cm}|C{3.8cm}|C{2.8cm}|C{3.8cm}|}
        \hline
         &  & \multirow{2}{*}{}  & \multirow{2}{*}{} \\
          Results $(\text{Regularization})$& Assumption& Estimation error $\|\beta^*-\hat{\beta}\|$ & Prediction error $\|C^{\frac{1}{2}}(\beta^*-\hat{\beta})\|^2_{L^2}$\\
         & &&  \\
        \hline
         \multirow{2}{*}{Yuan and Cai \cite{tonyyuan2012minimax}} &\multirow{2}{*}{$\beta^* \in \mathcal{R}(T^{\frac{1}{2}})$} & \multirow{2}{*}{N.A.} & \multirow{2}{*}{$\asymp n^{-\frac{b}{b+1}}$} \\
         $(\text{Tikhonov})$ & & &  \\
          \hline
          \multirow{2}{*}{Tong and Ng\cite{HTONGFLR}}& \multirow{2}{*}{$\beta^* \in \mathcal{R}(T^{\frac{1}{2}})$} & \multirow{2}{*}{N.A.} & \multirow{2}{*}{$\lesssim n^{-\frac{b}{b+1}}$}\\
           $(\text{Tikhonov})$& & & without Assumption \ref{as:5}\\
          \hline
          \multirow{3}{*}{Zhang et al.\cite{ZhangFaster2020}} & \multirow{3}{*}{$\beta^* \in \mathcal{R}(T^{\frac{1}{2}}(T^{\frac{1}{2}}CT^{\frac{1}{2}})^s )$}& \multirow{3}{*}{$\asymp n^{-\frac{sb}{1+b+2sb}}$} & \multirow{3}{*}{$\asymp n^{-\frac{(b+2sb)}{1+b+2sb}}$}  \\
           & & &  \\
           $(\text{Tikhonov})$ &$(0 < s \leq 1)$ &(RKHS norm) & $(0 \leq s \leq \frac{1}{2})$  \\
          \hline
          \multirow{2}{*}{Balasubramanian } & \multirow{2}{*}{$\beta^* \in \mathcal{R}(T^{\frac{1}{2}}(T^{\frac{1}{2}}CT^{\frac{1}{2}})^s )$} & \multirow{2}{*}{$\lesssim n^{-\frac{sb}{1+b+2sb}}$} & \multirow{2}{*}{$\lesssim n^{-\frac{b}{1+b}}~;\beta^* \in \mathcal{R}(T^{\frac{1}{2}}) $} \\
          et al.\cite{balasubramanian2024functional} &&&\\
         $(\text{Tikhonov})$ &$(0 < s \leq 1)$ &($L^2$ norm) &   \\
          \hline
          \multirow{3}{*}{ }&\multirow{3}{*}{} &\multirow{3}{*}{} &\multirow{3}{*}{}\\
          This paper Theorems \ref{NCestimation}, \ref{predictionNC} & $\beta^* \in \mathcal{R}(T^{\frac{1}{2}}(T^{\frac{1}{2}}CT^{\frac{1}{2}})^s )$ $(0 < s \leq \nu)$  & $\lesssim n^{-\frac{rb}{1+b+2rb}}$ (RKHS norm) $r= s\wedge \nu$ & $\lesssim n^{-\frac{(b+2rb)}{1+b+2rb}}$ $r= s\wedge (\nu-\frac{1}{2})$\\
          $(\text{General})$&&&\\
          \hline
        \multirow{3}{*}{ }&\multirow{3}{*}{} &\multirow{3}{*}{} &\multirow{3}{*}{}\\
        This paper Theorem \ref{lowerNC}& $\beta^* \in \mathcal{R}(T^{\frac{1}{2}}(T^{\frac{1}{2}}CT^{\frac{1}{2}})^s )$ $(s>0)$ & $\gtrsim n^{-\frac{sb}{1+b+2sb}}$ (RKHS norm) & $\gtrsim n^{-\frac{(b+2sb)}{1+b+2sb}}$ \\
          \hline
    \end{tabular}
    \centering
    \captionof{table}{Convergence rates for FLR in the RKHS framework when the kernel integral operator $T$ and the covariance operator $C$ do not commute. Here $\nu$ denotes the qualification of the regularization and $a\wedge b:= \min\{a,b\}.$}
    \label{table:2}
\end{center}     

\subsection{Organization}
The paper is organized as follows. Section~\ref{preliminaries} presents the necessary background for the regularization method and the FLR model in the RKHS setting and provides some elementary results that are necessary for analysis. In Section~\ref{main results}, we discuss our assumptions and provide convergence rates for the estimation and prediction errors in both commutative and non-commutative settings for the general regularization method. We provide matching lower bounds for these errors in Section~\ref{lower bounds}, which establish the optimality of the general regularization method in the FLR setting. In Section~\ref{Sec:similar}, we provide a detailed discussion comparing the RKHS-based approach for the FLR model with its learning theory counterpart and the non-parametric regression in RKHS. The proofs of all the results are presented in Section~\ref{proofs}, while supplementary results needed to prove the main results are relegated to Appendices \ref{supplements}--\ref{equivalentbounds}.
\subsection{Notations} $L^2(S)$ denotes the space of all real-valued square-integrable functions defined on $S$. For $f, g \in L^2(S)$, $L^2$ inner product and norm are defined as $\langle f, g \rangle_{L^2} = \int_{S}f(x)g(x)\, dx$ and $\|f\|^2_{L^2}= \langle f, f \rangle_{L^2}$. The inner product and norm associated with the RKHS, $\mathcal{H}$ are denoted as $\langle \cdot, \cdot \rangle_{\mathcal{H}}$ and $\|\cdot\|_{\mathcal{H}}$, respectively. For an operator $A$, $\mathcal{R}(A)$ denotes the range of operator $A$. For an operator $A : L^2 \to L^2$, the operator norm is defined as 
$\|A\|_{L^2 \to L^2} = \sup \{\|Af\|_{L^2} | f \in L^2, \|f\|_{L^2}=1\}.$
Similarly, for an operator $A : L^2 \to \mathcal{H}$, the operator norm is defined as 
$\|A\|_{L^2 \to \mathcal{H}} = \sup \{\|Af\|_{\mathcal{H}}: f \in L^2, \|f\|_{L^2}=1\}$. For two positive numbers $a$ and $b$, $a \lesssim b$ means $a \leq cb $ for some positive constant $c$.  For positive sequences $(a_{k})_k$ and $(b_{k})_k$, $a_{k} \asymp b_{k}$ means $a_k \lesssim b_k \lesssim a_k$ for all $k$.  For a random variable $W$ with law $P$ and a constant $b$, $W\lesssim_p b$ denotes that for
any $\delta > 0$, there exists a positive constant $c_\delta<\infty$  such that $P(W\le c_\delta b)\ge \delta$. For notational convenience, we define $\Lambda:= T^{\frac{1}{2}}CT^{\frac{1}{2}}$ and $\hat{\Lambda}_{n}:= T^{\frac{1}{2}}\hat{C}_{n}T^{\frac{1}{2}}$, where $\hat{C}_n$ is an empirical estimator of $C$ (see Section~\ref{preliminaries} for the definition of $\hat{C}_{n}$). We define $a\wedge b:=\min\{a,b\}$ for $a,b\in\mathbb{R}$.
\section{preliminaries}
\label{preliminaries}
A Hilbert space $\mathcal{H}$ of real-valued functions on $S$ is called an RKHS, if the point-wise evaluation functional is linear and continuous. There is a one-to-one correspondence between an RKHS  $\mathcal{H}$ and a reproducing kernel $k$ which is a symmetric and positive definite function from $S\times S$ to $\mathbb{R}$ such that $k(s,\cdot) \in \mathcal{H}$ and
 $f(s) = \langle k(s,\cdot),f \rangle_{\mathcal{H}}, \,\, \forall f \in \mathcal{H}.$ We assume that $k$ is measurable and $\sup_{x\in S}k(x,x)\leq \kappa $, where $\kappa $ is a positive constant. Then the elements in the associated RKHS $\mathcal{H}$ are measurable and bounded functions on $S.$
It can be easily seen that $\mathcal{H}$ is continuously embedded in $L^2(S)$ \cite{cuckerzhou2007learningtheory}. We refer the reader to  \cite{aronszajn1950rkhs,SVM2008steinwart,paulsen2016rkhs,AI2022sergei} for more details on RKHS.

We define the integral operator $T:= JJ^* : L^2(S) \to L^2(S)$, where $J: \mathcal{H} \to L^2(S) $ is the inclusion operator defined as $(Jf)(x) = \langle k(x,\cdot) ,f \rangle_{\mathcal{H}},\,\,\forall x\in S$ and $J^*:L^2(S) \to \mathcal{H}$ is the adjoint of the inclusion operator given as
$J^*g= \int_S k(\cdot,t)g(t)\, dt.$ It is easy to verify that $T^{\frac{1}{2}}$ is a partial isometry from $L^2(S)$ to $\mathcal{H}$, consequently $\mathcal{R}(T^{\frac{1}{2}}) = \mathcal{H}$. Moreover, for $\alpha > \frac{1}{2}$,  $\mathcal{R}(T^{\alpha}) \subset \mathcal{H}$. In other words, coefficients of a representation of a function, $f \in \mathcal{R}(T^{\alpha})$ in terms of eigenfunctions of the operator $T$, will have a faster decay for a larger value of $\alpha$.

Motivated by the observation that $\beta^*$ solves \eqref{true solution}, using the given data $\{(X_{i},Y_{i})\}_{i=1}^{n}$, we consider the following estimator of $\beta^*$:
\begin{equation}
\label{minimization equation}
    \begin{split}
        \hat{\beta} & = \arg\min_{\beta \in \mathcal{H}}\frac{1}{n}\sum_{i=1}^n[Y_i-\langle \beta, X_i \rangle_{L^2} ]^2
         = \arg\min_{\beta \in \mathcal{H}}\frac{1}{n}\sum_{i=1}^n[Y_i-\langle J\beta, X_i \rangle_{L^2} ]^2\\
        & = \arg\min_{\beta \in \mathcal{H}}\frac{1}{n}\sum_{i=1}^n[Y_i-\langle \beta, J^*X_i \rangle_{L^2} ]^2.\\
    \end{split}
\end{equation}
Let us define
$$\hat{C}_{n} := \frac{1}{n}\sum_{i=1}^{n} X_{i} \otimes X_{i} \quad \text{ and } \quad \hat{R} := \frac{1}{n} \sum_{i=1}^{n} Y_{i}X_{i}. $$
It can be easily verified that the solution of the equation $(\ref{minimization equation})$ can be given by solving the operator equation
\begin{equation}
    \label{estimator}
   J^*\hat{C}_{n}J \hat{\beta} = J^*\hat{R}.
\end{equation}
Equation $(\ref{estimator})$ represents a discretized version of the equation  $J^*CJ\beta^* = J^*\mathbb{E}[XY]$, which is an ill-conditioned problem. Regularization techniques have been well explored in the learning theory framework to solve such ill-posed problems \cite{sergeionregularization}. A detailed study on the Tikhonov regularization technique to solve $(\ref{estimator})$ has been carried out in \cite{tonyyuan2012minimax,ZhangFaster2020,balasubramanian2024functional,HTONGFLR,TONGHUBER}. In this paper, we use the general regularization techniques from the theory of ill-posed inverse problems to solve $(\ref{estimator})$, yielding the regularized estimator
\begin{equation*}
    \hat{\beta}_\lambda = g_{\lambda}(J^*\hat{C}_{n}J)J^*\hat{R},
\end{equation*}
where $\lambda>0$ is the regularization parameter. For simplicity, we remove the subscript $\lambda$ and write $\hat{\beta}_\lambda$ as $\hat{\beta}$. Here $g_{\lambda}:[0,\eta ] \to \mathbb{R},\, 0<\lambda \leq \eta$, is the regularization family satisfying the following conditions:
\begin{itemize}
    \item There exists a constant $A>0$ such that
    \begin{equation}
    \label{reg1}
        \sup_{0<\sigma \leq \eta} |\sigma g_{\lambda}(\sigma )| \leq A.
    \end{equation}
    \item There exists a constant $B>0$ such that
    \begin{equation}
    \label{reg2}
        \sup_{0<\sigma \leq \eta} |g_{\lambda}(\sigma )| \leq \frac{B}{\lambda}.
    \end{equation}
    \item There exists a constant $D >0 $ such that
    \begin{equation}
        \label{reg3}
        \sup_{0<\sigma \leq \eta} |1- g_{\lambda}(\sigma )\sigma| = \sup_{0<\sigma \leq \eta} |r_{\lambda}(\sigma)| \leq D.
    \end{equation}
    \item The maximal $p$ such that 
    \begin{equation}
    \label{qualification}
    \sup_{0<\sigma \leq \eta} |1- g_{\lambda}(\sigma )\sigma|\sigma^{p} = \sup_{0<\sigma \leq \eta}|r_{\lambda}(\sigma)| \sigma^{p} \leq \omega_{p} \lambda^{p},
    \end{equation}
    is called the qualification of the regularization family $g_{\lambda}$, where the constant $\omega_{p}$ does not depend on $\lambda$.
\end{itemize}
Examples of regularization families include Tikhonov ($g_\lambda(\sigma)=(\sigma+\lambda)^{-1}$), spectral cut-off ($g_\lambda(\sigma)=\sigma^{-1}\mathds{1}_{\sigma\ge \lambda}$), Showalter ($g_\lambda(\sigma)=\sigma^{-1}(1-e^{-\sigma/\lambda})\mathds{1}_{\sigma\ne 0}+\lambda^{-1}\mathds{1}_{\sigma=0}$), and Landweber iteration ($g_t(\sigma)=\sum^{t-1}_{i=1}(1-\sigma)^i$ where $\lambda$ is identified as $t^{-1}$, $t\in\mathbb{N}$), with qualification $1$ for Tikhonov and $\infty$ for the rest, where $\mathds{1}_{\Omega}(x)=1$ if $x\in \Omega$, and $0$, otherwise. We refer the reader to \cite{sergei2013,englmartinbook} for more details about the regularization method.

\section{Main Results}
\label{main results}
\noindent
In this section, we present the convergence rates of the general regularization method in the FLR model for both estimation and prediction errors. First, we begin with a list of assumptions required for the analysis.
\begin{assumption}\label{as:1}
  There exists a $h \in L^2(S)$ such that $\beta^* = T^{\alpha}h$, where $\alpha>0$, i.e., $\beta^*\in \mathcal{R}(T^\alpha)$.
\end{assumption}
\begin{assumption}\label{as:2}
    There exists a $h \in L^2(S)$ such that $\beta^* = T^{\frac{1}{2}}(T^{\frac{1}{2}}CT^{\frac{1}{2}})^s h$, where $s>0$, i.e., $\beta^*\in\mathcal{R}(T^{\frac{1}{2}}(T^{\frac{1}{2}}CT^{\frac{1}{2}})^s).$
\end{assumption}
Assumptions \ref{as:1} and \ref{as:2}, referred to as the source condition, impose the smoothness condition on $\beta^*$ in commutative (when $T$ and $C$ commute) and non-commutative (when $T$ and $C$ do not commute) settings, respectively. Since $\mathcal{R}(T^{\frac{1}{2}}) = \mathcal{H}\subset \mathcal{R}(T^\alpha),\,0<\alpha<\frac{1}{2}$ \cite{cuckerzhou2007learningtheory}, it follows that the latter imposes stricter conditions than the former. The weaker assumption considered for the commutative case allows us to explore the possibility of $\beta^*\in L^2(S)\backslash\mathcal{H}$. This assumption has been explored in \cite{balasubramanian2024functional} to derive sharper bounds in the commutative setting, which we improve in this paper by obtaining optimal convergence rates. Assumption~\ref{as:2}, which is more stringent than Assumption~\ref{as:1} has been employed by   \cite{ARKHSFORFLR,tonyyuan2012minimax} to derive optimal convergence rates when $s=0$ and by \cite{ZhangFaster2020, balasubramanian2024functional} to obtain convergence rates for $0 \leq s \leq \frac{1}{2}$ with the Tikhonov regularization. In this paper, we derive optimal convergence rates using the general regularization method under Assumption~\ref{as:2} for all positive values of $s$. We refer the reader to \cite[Section 3]{balasubramanian2024functional} for more insights into Assumption~\ref{as:2}.
We would like to highlight that while both the commutative and non-commutative cases can be analyzed in a unified manner under Assumption~\ref{as:2}, \cite{balasubramanian2024functional} demonstrated that sharper bounds can be obtained for the commutative case even under a weaker assumption (i.e., Assumption~\ref{as:1}). Hence, we analyze our estimator under two separate settings of commutative and non-commutative, using the appropriate assumptions.

\begin{assumption}\label{as:3}
    For some $t, c >1$,
\begin{equation*}
    i^{-t}\lesssim \mu_{i} \lesssim i^{-t} \quad \text{ and } \quad i^{-c}\lesssim \xi_{i} \lesssim i^{-c} \quad \forall  i \in \mathbb{N},
\end{equation*}
where $(\mu_{i}, \phi_{i})_{i \in \mathbb{N}}, (\xi_{i}, \phi_{i})_{i \in \mathbb{N}}$ are simple eigenvalue-eigenfunction pairs of operators $T$ and $C$, respectively.
\end{assumption}
\begin{assumption}\label{as:4}
     For some $b>1$,
\begin{equation*}
    i^{-b}\lesssim \tau_{i} \lesssim i^{-b} \quad \forall  i \in \mathbb{N},
\end{equation*}
where $(\tau_{i},e_{i})_{i\in \mathbb{N}}$ is the eigenvalue-eigenfunction pair of operator $T^{\frac{1}{2}}CT^{\frac{1}{2}}$.
\end{assumption}

Assumptions \ref{as:3} and \ref{as:4} capture the decay rate of eigenvalues of operators, $T$, $C$ and $T^{\frac{1}{2}}CT^{\frac{1}{2}}$, which are employed to obtain convergence rates in the commutative and non-commutative cases, respectively. Note that when $T$ and $C$ commute, then $b=t+c$.

\begin{assumption}\label{as:5}
    There exists a constant $d_1>0$ such that
\begin{equation*}
    \mathbb{E}[\langle X, f\rangle_{L^2}^4] \leq d_1[\mathbb{E}\langle X, f\rangle_{L^2}^2]^2,\,\,\, \forall\,f \in L^2(S).
\end{equation*}
\end{assumption}
In Assumption \ref{as:5}, it is posited that the fourth moment is constrained not to exceed the square of the second moment. An instance of data that adheres to this inequality is the one that follows a Gaussian distribution.
\subsection{Estimation error} 
We provide error bounds for estimating the slope function, $\beta^*$ under commutative and non-commutative
settings in Theorems \ref{comm.estimation} (proved in Sections~\ref{pcomm-est1} and \ref{pcomm-est2}) and \ref{NCestimation} (proved in Section~\ref{pcomm-nc}) under the source conditions mentioned in Assumptions~\ref{as:1} and \ref{as:2}, respectively.
 
\begin{theorem}[Estimation error--Commutative]
\label{comm.estimation}
Suppose $T$ and $C$ commute, and Assumptions~\ref{as:1}, \ref{as:3} and \ref{as:5} hold.
Let $\nu$ be the qualification of the regularization family and $\nu \geq \frac{1}{2}$. 
Then, for 
$ \lambda = 
    n^{-\frac{t+c}{1+c+2tr}}$, we have
\begin{enumerate}[label=(\alph*)]
    \item\label{a} $\|\beta^*-\hat{\beta}\|_{L^2} \lesssim_{p}
         n^{-\frac{r t}{1+c+2tr}},  \quad \text{ where } \quad  r = \min \{\alpha, \nu+ \frac{c}{t}(\nu -\frac{1}{2})\};$ 
    \item\label{b} $\|\beta^*-\hat{\beta}\|_{\mathcal{H}} \lesssim_{p}
        n^{-\frac{t(r-\frac{1}{2})}{1+c+2tr}}, \quad \mbox{ where} \quad r = \min \{\alpha, \nu+ \frac{c}{t}\nu +\frac{1}{2}\} \text{ for } \alpha \geq \frac{1}{2}$. 
\end{enumerate}
\end{theorem}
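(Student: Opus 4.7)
The plan is the classical bias--variance decomposition for regularized inverse problems. I introduce the noise-free regularized solution $\beta_\lambda := g_\lambda(\tilde T) J^*C\beta^* \in \mathcal H$, with $\tilde T := J^*CJ$, and split $\hat\beta - \beta^* = (\hat\beta - \beta_\lambda) + (\beta_\lambda - \beta^*)$, treating each piece in $L^2$ for part~\ref{a} and in $\mathcal H$ for part~\ref{b}. The stated choice $\lambda = n^{-(t+c)/(1+c+2rt)}$ will balance the two contributions.

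For the approximation error I would invoke the operator identity $J g_\lambda(J^*CJ) J^* = T\, g_\lambda(CT)$, valid by functional calculus because $J(J^*CJ)^n J^* = T(CT)^n$ for every $n$. In the commutative setting this gives $\beta^* - J\beta_\lambda = r_\lambda(TC)\beta^*$, where $TC=CT$ is self-adjoint. Since $T$ and $C$ share a joint eigenbasis $(\phi_i)$ with eigenvalues $\mu_i$ and $\xi_i$, and the source condition $\beta^* = T^\alpha h$ of Assumption~\ref{as:1} expands diagonally, the norms reduce to $\|\beta^*-J\beta_\lambda\|_{L^2}^2 = \sum_i r_\lambda(\mu_i\xi_i)^2\mu_i^{2\alpha} h_i^2$, and to the analogous sum in $\mathcal H$ with an extra factor $\mu_i^{-1}$ inside. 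Applying the qualification \eqref{qualification} at the exponent $p = rt/(t+c)$, together with the polynomial decay from Assumption~\ref{as:3}, yields bias contributions of order $\lambda^{rt/(t+c)}$ and $\lambda^{(r-1/2)t/(t+c)}$ respectively, with saturation governed by how large $p$ can be pushed while remaining at most $\nu$.

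For the sample error I would decompose $\hat\beta - \beta_\lambda = g_\lambda(\tilde C_n)\bigl[J^*\hat R - \tilde C_n\beta_\lambda\bigr] + \bigl[g_\lambda(\tilde C_n) - g_\lambda(\tilde T)\bigr]J^*C\beta^*$, with $\tilde C_n := J^*\hat C_n J$, and control each summand via operator Bernstein inequalities for the self-adjoint perturbations $\tilde C_n - \tilde T$ and the centred average $J^*\hat R - J^*C\beta^*$, in combination with the resolvent-comparison lemmas relegated to Appendix~\ref{supplements}. The fourth-moment hypothesis (Assumption~\ref{as:5}) supplies the variance bound, while the uniform kernel bound $\sup_x k(x,x)\le\kappa$ provides the almost-sure control required by Bernstein. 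Reassembling the bounds in the joint eigenbasis produces a sample error that matches the bias at the stated rate once $\lambda = n^{-(t+c)/(1+c+2rt)}$ is substituted.

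The main obstacle is sharpening the qualification saturation caps to the minima appearing in the statement. The naive bias-only argument caps $r$ at $\nu(t+c)/t$ in part~\ref{a} and at $\nu(t+c)/t+\tfrac12$ in part~\ref{b}; the latter is already the final cap, but the cap in part~\ref{a} must be reduced by $c/(2t)$ to $\nu + (c/t)(\nu-\tfrac12)$. This reduction originates in the cross term $\bigl[g_\lambda(\tilde C_n)-g_\lambda(\tilde T)\bigr]J^*C\beta^*$, whose bound requires invoking the qualification on the source condition after composition with a half power of $\tilde T$, effectively trading a half power of $\lambda$ for a proportional loss of usable qualification. Tracking this interplay between the bias expansion and the resolvent perturbation estimates in the presence of a general regularization family (rather than the Tikhonov-only setting of prior work) is the technically delicate part.
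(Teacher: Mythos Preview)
Your overall bias--variance strategy is in the same spirit as the paper, but the execution diverges in ways that leave real gaps. First, the decomposition you wrote for the sample error is not an identity: the right-hand side equals $\hat\beta-\beta_\lambda$ only if $\tilde C_n\beta_\lambda = J^*C\beta^*$, which is false. More importantly, the paper never introduces a population pivot $\beta_\lambda$; it works directly with $T^{1/2}r_\lambda(\hat\Lambda_n)T^{\alpha-1/2}h$ (Case~$\alpha\ge\tfrac12$) and, for $\alpha<\tfrac12$, inserts the empirical Tikhonov resolvent $(\hat\Lambda_n+\lambda I)^{-1}$ as an intermediary. The reason is precisely the step you flag as ``technically delicate'': there is no resolvent identity for $g_\lambda(\tilde C_n)-g_\lambda(\tilde T)$ when $g_\lambda$ is a general filter. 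The paper sidesteps this by writing $r_\lambda(\hat\Lambda_n) = r_\lambda(\hat\Lambda_n)(\hat\Lambda_n+\lambda I)^{\nu}\cdot(\hat\Lambda_n+\lambda I)^{-\nu}$, using the qualification \eqref{qualification} on the first factor, and then transferring the remaining \emph{integer} powers of the empirical resolvent to the population via the telescoping identity of Lemma~\ref{reducing}. You have not proposed any mechanism of this kind, and without one the cross term cannot be controlled for general $g_\lambda$.

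Two further issues. Your appeal to Bernstein is not supported by the assumptions: only a fourth-moment bound (Assumption~\ref{as:5}) is available, not almost-sure boundedness of $X$ or of $J^*X$, and the kernel bound $\sup_x k(x,x)\le\kappa$ does not supply it. The paper uses only Markov/Chebyshev (Lemmas~\ref{Empirical bound}, \ref{cmdifference}, \ref{ncmdifference}, \ref{Covariance estimation}). Finally, your diagnosis of the cap reduction in part~\ref{a} is off. It does not arise from the operator-perturbation cross term; it arises because measuring in $L^2$ forces one to peel off the outer factor $T^{1/2}$, which does not commute with the \emph{empirical} $\hat\Lambda_n$. One writes $\|T^{1/2}(\Lambda+\lambda I)^{-1/2}\|\lesssim\lambda^{-c/(2(t+c))}$, spends half a power of the resolvent to do so, and is left with only $(\Lambda+\lambda I)^{-(\nu-1/2)}$ to absorb $T^{\alpha-1/2}$; this is what produces $r=\min\{\alpha,\nu+(c/t)(\nu-\tfrac12)\}$. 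In part~\ref{b} the $\mathcal H$-norm is measured after applying the isometry $T^{1/2}:L^2\to\mathcal H$, so no such extraction is needed and the full $\nu$ is retained, giving the larger cap $\nu+(c/t)\nu+\tfrac12$.
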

\noindent
\begin{remark}
(i) The assumption $\beta^* \in \mathcal{R}(T^{\alpha})$ enforces a smoothness condition that influences the properties of the functional space to which the slope function belongs. For $\alpha \geq\frac{1}{2}$, $\mathcal{R}(T^{\alpha})\subset\mathcal{H}$, while for $0 <\alpha <\frac{1}{2}$, $\mathcal{R}(T^{\alpha}) \subset L^2(S)\setminus \mathcal{H}$. \vspace{1.5mm}

(ii) The results on $L^2$ estimation error in the literature mostly deal with $\alpha=\frac{1}{2}$ \cite{ARKHSFORFLR} and $0 < \alpha \leq \frac{1}{2}$ \cite{balasubramanian2024functional}. When $\alpha =\frac{1}{2}$, our $L^2$-error rates align with the minimax rate established by \cite{ARKHSFORFLR} (also see Theorem~\ref{lowercomm}), while for $0 < \alpha < \frac{1}{2}$, our rates improve those provided in \cite{balasubramanian2024functional} 
from $n^{-\frac{\alpha t}{1+c+2t(1-\alpha)}}$ to $n^{-\frac{\alpha t}{1+c+2t\alpha}}$. In fact, in Theorem~\ref{lowercomm}, we show this improved rate to be minimax optimal. Moreover, the results presented in Theorem~\ref{comm.estimation} cater to smoothness index $\alpha>\frac{1}{2}$, which means faster convergence rates are obtained for smoother slope function but with the rate saturating at a level that depends on the qualification of the regularization method. \vspace{1.5mm}

(iii) As can be seen from Theorem \ref{comm.estimation}, we can allow $\alpha$ to go beyond the qualification $\nu$, and still the algorithm can achieve faster convergence rates. In particular, while using Tikhonov regularization for the commutative setting, we can get faster convergence rates for our algorithm even if $\alpha >1$. This happens since the regularization is applied to the operator $\Lambda$ while the source condition on $\beta^*$ is enforced through the operator $T$.
\end{remark}

\noindent
The following result (proved in Section~\ref{pcomm-est2}) presents the estimation error in the RKHS norm when $T$ and $C$ do not commute.
\begin{theorem}[Estimation error--Non-commutative]
\label{NCestimation}
Suppose Assumptions~\ref{as:2}, \ref{as:4} and \ref{as:5} hold.
Let $\nu$ be the qualification of the regularization family and $\nu \geq \frac{1}{2}$.
Then, for $ \lambda = n^{-\frac{b}{1+b+2rb}}$, we have
\begin{equation*}
    \|\beta^*-\hat{\beta}\|_{\mathcal{H}} \lesssim_p n^{-\frac{br}{1+b+2rb}}, \quad \text{ where } \quad r = \min \{s, \nu\}.
\end{equation*}
\end{theorem}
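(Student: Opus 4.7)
The plan is to run a bias--variance decomposition of $\hat\beta-\beta^\ast$ in the functional calculus of $\hat\Lambda_n$ and $\Lambda$, exploiting the fact (quoted in Section~\ref{preliminaries}) that $T^{1/2}$ is a partial isometry from $L^{2}(S)$ onto $\mathcal{H}$, so that $\mathcal{H}$--norm bounds on $\hat\beta-\beta^\ast$ can be transferred to $L^{2}$--norm bounds on corresponding pre-images. Using $Y_i=\langle X_i,\beta^\ast\rangle_{L^2}+\epsilon_i$, I would start from
\begin{equation*}
\hat\beta-\beta^\ast \;=\; g_\lambda(J^\ast\hat C_nJ)\,J^\ast\bigl(\hat R-\hat C_nJ\beta^\ast\bigr)\;-\;r_\lambda(J^\ast\hat C_nJ)\beta^\ast,
\end{equation*}
where $r_\lambda(\sigma)=1-\sigma g_\lambda(\sigma)$ and $\hat R-\hat C_nJ\beta^\ast=n^{-1}\sum_i\epsilon_iX_i$, and treat the approximation (bias) and sample (variance) summands separately.

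For the bias, substituting the source condition $\beta^\ast=T^{1/2}\Lambda^{s}h$ and pushing the $\mathcal{H}$--norm through $T^{1/2}$ reduces matters to estimating $\|r_\lambda(\hat\Lambda_n)\Lambda^{s}h\|_{L^2}$. Inserting $(\hat\Lambda_n+\lambda I)^{r}(\hat\Lambda_n+\lambda I)^{-r}$ with $r=\min\{s,\nu\}$ yields the three-factor split
\begin{equation*}
\bigl\|r_\lambda(\hat\Lambda_n)(\hat\Lambda_n+\lambda I)^{r}\bigr\|\cdot\bigl\|(\hat\Lambda_n+\lambda I)^{-r}(\Lambda+\lambda I)^{r}\bigr\|\cdot\bigl\|(\Lambda+\lambda I)^{-r}\Lambda^{s}\bigr\|\cdot\|h\|_{L^2}.
\end{equation*}
The qualification \eqref{qualification} bounds the first factor by $\lesssim\lambda^{r}$, the third factor is trivially $\lesssim 1$, and the middle factor is $O(1)$ on the high-probability event $\|(\Lambda+\lambda I)^{-1/2}(\hat\Lambda_n-\Lambda)(\Lambda+\lambda I)^{-1/2}\|\le\tfrac12$ via a Cordes/Heinz-type operator-monotonicity inequality for fractional powers. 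For the variance summand I would use an analogous split producing $\|g_\lambda(\hat\Lambda_n)(\hat\Lambda_n+\lambda I)^{1/2}\|\lesssim\lambda^{-1/2}$ from \eqref{reg2}--\eqref{reg3}, an $O(1)$ cross-term on the same good event, and a Bernstein inequality for Hilbert-space valued sums applied to $\|(\Lambda+\lambda I)^{-1/2}T^{1/2}n^{-1}\sum_i\epsilon_iX_i\|_{L^2}$, in which Assumption~\ref{as:4} gives the effective-dimension estimate $\mathcal{N}(\lambda)=\operatorname{tr}(\Lambda(\Lambda+\lambda I)^{-1})\lesssim\lambda^{-1/b}$ and hence an overall variance $\lesssim\sqrt{\lambda^{-1-1/b}/n}$ in probability.

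Combining the two parts yields bias $\lesssim\lambda^{r}$ and variance $\lesssim\sqrt{\lambda^{-1-1/b}/n}$; equating them forces $\lambda\asymp n^{-b/(1+b+2rb)}$ and delivers the claimed rate $n^{-br/(1+b+2rb)}$. The main technical obstacle is the non-commutative cross-term $\|(\hat\Lambda_n+\lambda I)^{-r}(\Lambda+\lambda I)^{r}\|$ in the regime $r>1$, which is precisely the range beyond what Tikhonov regularization (with $\nu=1$) could reach: the textbook Heinz/Cordes bound only covers exponents in $[0,1]$, so for general qualification $\nu$ one has to bootstrap via iteration of one-step perturbation estimates (or via a resolvent/contour integral representation of the fractional powers) together with the weighted concentration of $\hat\Lambda_n-\Lambda$ guaranteed by Assumption~\ref{as:5}. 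Handling this uniformly in $r\le\nu$ is what is needed to unlock the smoothness regime $s>1$ that earlier works could not address.
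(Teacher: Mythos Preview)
Your high-level structure (bias--variance split, pushing the $\mathcal H$--norm through the partial isometry $T^{1/2}$, qualification bound on the bias, effective-dimension bound $\mathcal N(\lambda)\lesssim\lambda^{-1/b}$ on the variance) matches the paper exactly, and you correctly single out the cross-term $\|(\hat\Lambda_n+\lambda I)^{-r}(\Lambda+\lambda I)^{r}\|$ for $r>1$ as the only non-routine step.

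Where your route diverges from the paper is in how this cross-term is dispatched. You propose to show it is $O(1)$ as a standalone operator norm, via contour-integral representations of fractional powers or an iterative bootstrap. The paper never attempts that. Instead it works with exponent $\nu$ (not $r$), writes $\nu=\nu'+(\nu-\nu')$ with $\nu'=\lfloor\nu\rfloor$, and uses an add--subtract:
\[
(\hat\Lambda_n+\lambda I)^{-\nu}\Lambda^{s}h
\;\longrightarrow\;
\bigl[(\Lambda+\lambda I)^{-(\nu-\nu')}(\hat\Lambda_n+\lambda I)^{-\nu'}-(\Lambda+\lambda I)^{-\nu}\bigr]\Lambda^{s}h
\;+\;(\Lambda+\lambda I)^{-\nu}\Lambda^{s}h.
\]
The fractional piece $(\hat\Lambda_n+\lambda I)^{-(\nu-\nu')}(\Lambda+\lambda I)^{\nu-\nu'}$ has exponent in $[0,1)$ and is handled by a single Cordes application (Lemma~\ref{Covariance estimation}). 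The integer piece $(\hat\Lambda_n+\lambda I)^{-\nu'}-(\Lambda+\lambda I)^{-\nu'}$ is expanded by the telescoping identity of Lemma~\ref{reducing} into a sum of terms each containing exactly one factor $\Lambda-\hat\Lambda_n$, which is then controlled by the weighted concentration of Lemma~\ref{ncmdifference}. The point is that the $\lambda^{\nu}$ coming from the qualification bound is \emph{not} spent on a three-factor split; it is kept and used to cancel the $\lambda^{-\nu'}$ produced by the telescoped inverses, leaving only $n^{-1/2}\lambda^{-1/(2b)}$ for that piece. The remaining ``clean'' term $\lambda^{\nu}\|(\Lambda+\lambda I)^{-\nu}\Lambda^{s}h\|$ is bounded by $\lambda^{r}$ via Lemma~\ref{supbound}, which is where $r=\min\{s,\nu\}$ finally enters.

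So your proposal is correct in spirit, but the paper's organization is simpler: it avoids any need to prove $\|(\hat\Lambda_n+\lambda I)^{-r}(\Lambda+\lambda I)^{r}\|=O(1)$ for $r>1$ (which is delicate and not obviously true in operator norm without extra work), by instead letting the qualification factor absorb the blow-up of the integer-power resolvent difference. A minor further difference: the paper controls the noise term via Chebyshev (Lemma~\ref{Empirical bound}) rather than Bernstein, which suffices for ``$\lesssim_p$'' statements.
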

\noindent
The rate in Theorem~\ref{NCestimation} not only matches with the existing rates of \cite{balasubramanian2024functional,ZhangFaster2020} when $0< s \leq 1$, but also addresses the case of smoothness index $s$ beyond 1 (but with the rate saturating at $s=\nu$, the qualification of the regularization family). Moreover, in Theorem~\ref{lowerNC}, we show the bound of Theorem~\ref{NCestimation} to be minimax optimal.

\subsection{Prediction error} 
We present the prediction error bounds in commutative and non-commutative settings in Theorems~\ref{predictionbasic} (proved in Section~\ref{pcomm-pred1}) and \ref{predictionNC} (proved in Section~\ref{pcomm-pred2}), respectively.

\begin{theorem}[Prediction error--Commutative]
    \label{predictionbasic}
    Suppose $T$ and $C$ commute, and Assumptions~\ref{as:1}, \ref{as:3} and \ref{as:5} hold. Let $\nu$ be the qualification of the regularization family and $\nu \geq \frac{1}{2}$.
Then, for $ \lambda = n^{-\frac{t+c}{1+c+2tr}}$, we have
\begin{equation*}
    \|C^{\frac{1}{2}}(\beta^*-\hat{\beta})\|_{L^2}^2 \lesssim_p
       n^{-\frac{2r t+c}{1+c+2tr}} , \quad \mbox{ where } \quad r = \min\left\{\alpha, \nu + \frac{1}{2} + \nu \frac{c}{t} \right\}.
\end{equation*}
\end{theorem}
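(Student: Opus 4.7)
The plan is to split the prediction error into a deterministic approximation (bias) term and a data-dependent sample (variance) term, exploit the commutativity of $T$ and $C$ via joint spectral decomposition, and balance the two contributions through the prescribed choice of $\lambda$. Writing $Y_i = \langle X_i, \beta^*\rangle_{L^2} + \epsilon_i$, so that $\hat{R} = \hat{C}_n \beta^* + \bar\epsilon$ with $\bar\epsilon := \frac{1}{n}\sum_i \epsilon_i X_i$, and applying the commutator identity $J g_\lambda(J^* A J) = g_\lambda(J J^* A)J = g_\lambda(T A) J$ (valid for polynomials in $A$ and extended to continuous $g_\lambda$ via functional calculus), the estimator becomes $J\hat\beta = g_\lambda(T\hat C_n) T \hat R$, yielding
\begin{align*}
J\hat\beta - \beta^* = -r_\lambda(T\hat C_n)\beta^* + g_\lambda(T\hat C_n) T\bar\epsilon.
\end{align*}
A further splitting $r_\lambda(T\hat C_n)\beta^* = r_\lambda(TC)\beta^* + [r_\lambda(T\hat C_n) - r_\lambda(TC)]\beta^*$ isolates the deterministic bias $-r_\lambda(TC)\beta^*$; the remaining terms are stochastic.

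For the deterministic bias, commutativity provides a joint eigenbasis $\{\phi_i\}$ with $T\phi_i=\mu_i\phi_i$ and $C\phi_i=\xi_i\phi_i$; using $\beta^*=T^\alpha h$ and $h=\sum_i h_i\phi_i$,
\begin{align*}
\|C^{1/2}r_\lambda(TC)\beta^*\|_{L^2}^2 = \sum_i \xi_i\mu_i^{2\alpha}|r_\lambda(\mu_i\xi_i)|^2|h_i|^2.
\end{align*}
Writing $\xi_i\mu_i^{2\alpha} = \sigma_i^{2p}\cdot\mu_i^{2\alpha-2p}\xi_i^{1-2p}$ with $\sigma_i:=\mu_i\xi_i$, applying the qualification $|r_\lambda(\sigma)|\sigma^p\leq\omega_p\lambda^p$ for $p\leq\nu$, and using Assumption~\ref{as:3} on the eigenvalue asymptotics to control the residual factor (whose behaviour in $i$ is governed by the sign of $2p(t+c)-2\alpha t - c$), a case analysis on $\alpha$ versus $\nu$ combined with a finer split of the sum at $\sigma_i\asymp\lambda$ yields
\[
\|C^{1/2}r_\lambda(TC)\beta^*\|_{L^2}^2 \lesssim \lambda^{(c+2rt)/(t+c)}, \qquad r=\min\{\alpha,\,\nu+\tfrac12+\nu c/t\}.
\]

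For the stochastic contribution, the noise piece admits the direct estimate
\begin{align*}
\mathbb{E}\|C^{1/2}g_\lambda(TC)T\bar\epsilon\|_{L^2}^2 = \frac{\sigma^2}{n}\sum_i \sigma_i^2 g_\lambda(\sigma_i)^2 \lesssim \frac{\sigma^2}{n}\mathcal{N}(\lambda),
\end{align*}
where $\mathcal{N}(\lambda):=\operatorname{tr}(\Lambda(\Lambda+\lambda I)^{-1})\lesssim \lambda^{-1/(t+c)}$ in view of Assumption~\ref{as:3} (with $b=t+c$ in the commutative case). The operator-perturbation piece $C^{1/2}[r_\lambda(T\hat C_n)-r_\lambda(TC)]\beta^*$ is handled by Bernstein-type concentration for $\hat C_n - C$ under Assumption~\ref{as:5}, combined with standard perturbation bounds such as $\|(\Lambda+\lambda I)^{-1/2}(\hat\Lambda_n-\Lambda)\|_{L^2\to L^2}\lesssim_p\sqrt{\mathcal{N}(\lambda)/n}$; these together give $\lesssim_p \mathcal{N}(\lambda)/n$ up to lower-order remainders.

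Finally, with $\lambda = n^{-(t+c)/(1+c+2rt)}$, the bias bound $\lambda^{(c+2rt)/(t+c)} = n^{-(c+2rt)/(1+c+2rt)}$ exactly matches the variance bound $\mathcal{N}(\lambda)/n = \lambda^{-1/(t+c)}/n = n^{-(c+2rt)/(1+c+2rt)}$, producing the claimed rate. The main obstacle will be the refined bias analysis in the saturation regime: the naive supremum-over-$i$ bound becomes slack once $\alpha$ exceeds $\nu+c(\nu-\tfrac12)/t$, and one must instead split the sum at $\sigma_i\asymp\lambda$ and carefully balance the small-$i$ head (where $\xi_i\mu_i^{2\alpha}$ is of order one but the residual factor is unfavourable) against the large-$i$ tail (where the full $\lambda^{2\nu}$ decay from qualification is available) in order to attain the claimed saturation exponent $r=\nu+\tfrac12+\nu c/t$.
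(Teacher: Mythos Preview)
Your strategy---split into a deterministic bias $r_\lambda(TC)\beta^*$, a perturbation $[r_\lambda(T\hat C_n)-r_\lambda(TC)]\beta^*$, and a noise term---is different from the paper's. The paper keeps the empirical operator $\hat\Lambda_n=T^{1/2}\hat C_n T^{1/2}$ inside $g_\lambda$ throughout and never isolates a purely deterministic bias; instead it bounds the noise via $\|(\Lambda+\lambda I)^{-1/2}T^{1/2}(\hat R-\hat C_n\beta^*)\|$ (Lemma~\ref{Empirical bound}) together with operator-norm swaps $\|(\Lambda+\lambda I)^{1/2}(\hat\Lambda_n+\lambda I)^{-1/2}\|\lesssim_p 1$ (Lemma~\ref{Covariance estimation}), and it treats the remaining ``bias'' term $\|C^{1/2}(T^{1/2}g_\lambda(\hat\Lambda_n)T^{1/2}\hat C_n\beta^*-\beta^*)\|$---which still contains $\hat C_n$---by a case split: for $\alpha\le\tfrac12$ via a Tikhonov intermediary, and for $\alpha\ge\tfrac12$ by writing $\beta^*=T^{1/2}T^{\alpha-1/2}h$ and reducing to $\|\Lambda^{1/2}r_\lambda(\hat\Lambda_n)T^{\alpha-1/2}h\|$. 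Your route has the potential advantage of treating all $\alpha>0$ uniformly in the bias via the joint spectral sum, but the paper's route avoids two problems that your proposal does not resolve.

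First, the identity $Jg_\lambda(J^*AJ)=g_\lambda(TA)J$ is applied with $A=\hat C_n$, and $T\hat C_n$ is \emph{not} self-adjoint: commutativity of $T$ and $C$ does not force $T$ and $\hat C_n$ to commute. Functional calculus for $g_\lambda(T\hat C_n)$ therefore needs justification beyond ``continuous extension''. The paper avoids this entirely by using Lemma~\ref{J T}, which lands on the self-adjoint $T^{1/2}\hat C_n T^{1/2}$. Second, and more seriously, your stochastic terms are not controlled. In the noise piece you silently replaced $g_\lambda(T\hat C_n)$ by $g_\lambda(TC)$ to get the clean expectation $\tfrac{\sigma^2}{n}\sum_i\sigma_i^2 g_\lambda(\sigma_i)^2$; the actual estimator carries $\hat C_n$, and that swap must itself be bounded. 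The perturbation piece $C^{1/2}[r_\lambda(T\hat C_n)-r_\lambda(TC)]\beta^*$ is dismissed with ``Bernstein-type concentration \ldots\ standard perturbation bounds'', but for a \emph{general} regularization family (not Tikhonov) there is no resolvent identity available, and the paper has to work hard here: it inserts $(\hat\Lambda_n+\lambda I)^{\nu}(\hat\Lambda_n+\lambda I)^{-\nu'}(\Lambda+\lambda I)^{\nu'-\nu}$, exploits the qualification bound on $r_\lambda(\hat\Lambda_n)(\hat\Lambda_n+\lambda I)^{\nu}$, and then controls $(\hat\Lambda_n+\lambda I)^{-\nu'}-(\Lambda+\lambda I)^{-\nu'}$ via the telescoping identity of Lemma~\ref{reducing} and the Hilbert--Schmidt-type estimate of Lemma~\ref{ncmdifference}. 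None of this machinery is present in your outline, and it is exactly what makes the general-$g_\lambda$ case go through.
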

\noindent
The above result matches the results of \cite{balasubramanian2024functional,ARKHSFORFLR}, wherein \cite{ARKHSFORFLR} considers only $\alpha=\frac{1}{2}$, while \cite{balasubramanian2024functional} extended the result of \cite{ARKHSFORFLR} under Assumption \ref{as:1} for $0 < \alpha \leq \frac{1}{2}$. Moreover, Theorem \ref{predictionbasic} obtains a rate for $\alpha$ beyond the above-mentioned ranges, and we show this rate to be minimax optimal in Theorem~\ref{lowercomm}. 
\begin{remark}\label{rem:compare}
In the learning theory approach for FLR \cite{lin2017multipass, lin2020optimalspectral, lin2020distributed_stochastic}, the authors consider the source condition with respect to the covariance operator, specifically $\beta^* \in \mathcal{R}(C^{\gamma})$. Under infinite qualification, the convergence rates achieved for the FLR model in these articles are $\|C^{\frac{1}{2}}(\beta^*-\check{\beta})\|_{L^2}^2 \lesssim_{p} n^{-\frac{2 \gamma c + c}{1 + c + 2 \gamma c}}$ and $\|\beta^* - \check{\beta}\|_{L^2} \lesssim_{p} n^{-\frac{\gamma c}{1+c + 2 \gamma c}}$. We can observe that these rates can be produced by Theorems \ref{comm.estimation} and \ref{predictionbasic}, i.e., under the commutativity of $T$ and $C$, if $\gamma c=\alpha t$. Though both of these methods seem equivalent, we would like to highlight that the boundedness assumption over $\|X\|_{L^2}$ considered in the learning theory approach does not incorporate all stochastic processes while our approach works for a more general class of stochastic processes (See Remark~\ref{comparetobounded}).
\end{remark}

The following result, which considers the prediction error when $T$ and $C$ do not commute, is an extension of the results of \cite{balasubramanian2024functional,tonyyuan2012minimax,HTONGFLR,ZhangFaster2020} in terms of the smoothness condition. \cite{balasubramanian2024functional,tonyyuan2012minimax,HTONGFLR} have considered the  prediction error for non-commutative case with $\beta^*\in \mathcal{H}$. In \cite{ZhangFaster2020}, authors have extended these results with Assumption \ref{as:3} but for a restricted range of $0<s\leq \frac{1}{2}$. We extend all these results for all positive ranges of $s$.
\begin{theorem}[Prediction error--Non-commutative]
\label{predictionNC}
Suppose Assumptions~\ref{as:2}, \ref{as:4} and \ref{as:5} hold. 
Let $\nu$ be the qualification of the regularization family and $\nu \geq \frac{1}{2}$.
Then, for $ \lambda = n^{-\frac{b}{1+b+2rb}}$, we have
\begin{equation*}
    \|C^{\frac{1}{2}}(\beta^*-\hat{\beta})\|_{L^2}^2 \lesssim_p n^{-\frac{b(2r+1)}{1+b+2rb}}, \quad \text{where} \quad  r = \min \left\{s, \nu - \frac{1}{2} \right\}.
\end{equation*}
\end{theorem}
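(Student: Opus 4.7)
The plan is to run a bias--variance decomposition for the prediction error through a ``noiseless'' regularized surrogate, control the bias via the qualification of $g_\lambda$ and the source condition, control the sample error via operator-perturbation tools combined with an effective-dimension bound, and finally optimize over $\lambda$.

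Set $R := \mathbb{E}[XY] = C\beta^*$ and introduce the noiseless regularized object
\[
\tilde{\beta}_\lambda \;:=\; T^{1/2} g_\lambda(\Lambda)\, T^{1/2} R \;=\; T^{1/2} g_\lambda(\Lambda)\,\Lambda^{s+1} h,
\]
where Assumption~\ref{as:2} is used in the second equality. Then
\[
\|C^{1/2}(\beta^*-\hat\beta)\|_{L^2}^2 \;\lesssim\; \|C^{1/2}(\beta^*-\tilde\beta_\lambda)\|_{L^2}^2 + \|C^{1/2}(\tilde\beta_\lambda-\hat\beta)\|_{L^2}^2,
\]
splitting the task into a bias term and a sample-error term.

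For the bias, write $\beta^*-\tilde\beta_\lambda = T^{1/2} r_\lambda(\Lambda)\Lambda^s h$ and use the elementary identity $\|C^{1/2} T^{1/2} f\|_{L^2}^2 = \|\Lambda^{1/2} f\|_{L^2}^2$ to get
\[
\|C^{1/2}(\beta^*-\tilde\beta_\lambda)\|_{L^2}^2 \;=\; \|\Lambda^{s+1/2} r_\lambda(\Lambda)\, h\|_{L^2}^2 \;\leq\; \omega_{s+1/2}^{2}\,\lambda^{2s+1}\|h\|_{L^2}^2,
\]
by the qualification property (\ref{qualification}) applied with index $s+\tfrac12$, valid as long as $s+\tfrac12\leq \nu$; beyond this threshold the exponent saturates at $2(\nu-\tfrac12)+1$, which is precisely why $r=s\wedge(\nu-\tfrac12)$ enters the statement.

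For the sample-error term, insert and subtract $T^{1/2} g_\lambda(\hat\Lambda_n)\,T^{1/2} R$ to write $\tilde\beta_\lambda-\hat\beta$ as a noise piece $T^{1/2} g_\lambda(\hat\Lambda_n)T^{1/2}(R-\hat R)$ plus an operator-perturbation piece $T^{1/2}[g_\lambda(\Lambda)-g_\lambda(\hat\Lambda_n)]T^{1/2} R$. Apply $C^{1/2}$, intersperse factors of $(\hat\Lambda_n+\lambda)^{\pm 1/2}$ and $(\Lambda+\lambda)^{\pm 1/2}$, and reduce the analysis to three ingredients: (a) the random perturbation factor $\|(\hat\Lambda_n+\lambda)^{1/2}(\Lambda+\lambda)^{-1/2}\|$ and its reverse, which are $O_p(1)$ on the regime $n\lambda^{1/b}\to\infty$ by a Bernstein-type bound on $\|\hat\Lambda_n-\Lambda\|$ (enabled by Assumption~\ref{as:5}); (b) the noise functional $\|(\Lambda+\lambda)^{-1/2} T^{1/2}(\hat R-R)\|_{L^2}$, whose variance is controlled by the effective dimension $\mathcal{N}(\lambda):=\mathrm{tr}\bigl(\Lambda(\Lambda+\lambda)^{-1}\bigr)\asymp \lambda^{-1/b}$ under Assumption~\ref{as:4}; and (c) the functional difference $g_\lambda(\hat\Lambda_n)-g_\lambda(\Lambda)$ treated by the resolvent/integral representation of $g_\lambda$ available under the regularization properties (\ref{reg1})--(\ref{qualification}), as in \cite{sergei2013,englmartinbook}. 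Combining these yields a sample error of order $O_p(\mathcal{N}(\lambda)/n)=O_p(\lambda^{-1/b}/n)$.

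Adding the two contributions gives $\lambda^{2r+1}+\lambda^{-1/b}/n$, optimal at $\lambda\asymp n^{-b/(1+b+2rb)}$, which produces the stated rate $n^{-b(2r+1)/(1+b+2rb)}$. The main obstacle is step (c): because $T$ and $C$ do not commute, all smoothness must be transported through $\Lambda$ rather than through $T$ and $C$ separately, and one has to check that the factor $T^{1/2}$ in Assumption~\ref{as:2} supplies the extra half-power of smoothness needed to absorb the $C^{1/2}$-weighting while still leaving the inner power $\Lambda^s$ to interact cleanly with the qualification of $g_\lambda$. This is the mechanism that shifts the saturation threshold from $\nu$ (as in the estimation error of Theorem~\ref{NCestimation}) down to $\nu-\tfrac12$ here. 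Everything else reduces to standard concentration under Assumption~\ref{as:5} together with the eigen-decay of $\Lambda$ from Assumption~\ref{as:4}.
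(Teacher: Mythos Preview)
Your bias computation and the explanation of why $r=s\wedge(\nu-\tfrac12)$ emerges are correct and match the paper's mechanism. The decomposition, however, differs from the paper's: you pass through the \emph{population} surrogate $\tilde\beta_\lambda=T^{1/2}g_\lambda(\Lambda)T^{1/2}R$, whereas the paper splits via the \emph{empirical} object $T^{1/2}g_\lambda(\hat\Lambda_n)T^{1/2}\hat C_n\beta^*$. This choice makes the paper's ``noise'' term depend only on $\hat R-\hat C_n\beta^*=n^{-1}\sum_i\epsilon_iX_i$ (pure regression noise, handled by Lemma~\ref{Empirical bound}) and collapses the ``approximation'' term to $\|\Lambda^{1/2}r_\lambda(\hat\Lambda_n)\Lambda^sh\|_{L^2}$. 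The latter is then treated, as in Theorem~\ref{NCestimation}, by inserting powers $(\hat\Lambda_n+\lambda I)^{\pm\nu}$ and $(\Lambda+\lambda I)^{\pm\nu}$, invoking the qualification bound, and reducing the empirical--population mismatch to \emph{resolvent}-power differences via Lemmas~\ref{reducing} and~\ref{ncmdifference}. At no point does the paper compare $g_\lambda$ at two different operators.

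Your step~(c) is the gap: properties (\ref{reg1})--(\ref{qualification}) by themselves do not furnish a resolvent/integral representation or an operator-Lipschitz bound for $g_\lambda(\hat\Lambda_n)-g_\lambda(\Lambda)$; for instance, spectral cut-off $g_\lambda(\sigma)=\sigma^{-1}\mathds{1}_{\sigma\ge\lambda}$ is discontinuous, so no such bound exists. If you try to rescue it by writing $T^{1/2}R=\Lambda^{s+1}h$ and using $g_\lambda(A)A=I-r_\lambda(A)$, the perturbation piece unwinds to $r_\lambda(\hat\Lambda_n)\Lambda^sh-r_\lambda(\Lambda)\Lambda^sh+g_\lambda(\hat\Lambda_n)(\hat\Lambda_n-\Lambda)\Lambda^sh$, i.e., you are back to bounding $\|\Lambda^{1/2}r_\lambda(\hat\Lambda_n)\Lambda^sh\|_{L^2}$, which is exactly the paper's approximation term and requires its resolvent-power machinery rather than a generic functional-calculus argument. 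As a smaller point, your ``noise piece'' $T^{1/2}(R-\hat R)$ is not pure noise: $R-\hat R=(C-\hat C_n)\beta^*-n^{-1}\sum_i\epsilon_iX_i$, so its second moment is not simply $\sigma^2\mathcal{N}(\lambda)/n$ and the covariance-fluctuation part must be handled separately using the source condition.
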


\begin{remark}\label{rem:compare1}
Based on Remark~\ref{rem:compare}, it is clear that the learning theory approach and the commutative setting of our approach are equivalent (except for the assumptions). Therefore, any comparison of the learning theory approach to the non-commutative setting can be made by comparing the commutative and non-commutative settings. As it can be seen the source conditions in these settings are completely different and they match as $s\rightarrow 0$ and $\alpha=\frac{1}{2}$, in which case the rates match. Otherwise, these settings are difficult to compare because of the interaction between $T$ and $T^{1/2}CT^{1/2}$ in the non-commutative setting.
\end{remark}

\begin{remark}\label{comparetobounded}
Note that all the results so far have been obtained under the fourth moment assumption as captured in Assumption \ref{as:5}. The key ingredients in the proofs of these results that exploit Assumption \ref{as:5} are
Lemmas~\ref{cmdifference} and \ref{ncmdifference}. In the learning theory literature, a bounded assumption $\sup_{\omega \in \Omega}\|X(\cdot, \omega)\|_{L^2} < \infty$ is used in place of Assumption \ref{as:5} to obtain the optimal rates. In Appendix~\ref{equivalentbounds}, we derive equivalent versions of Lemma \ref{cmdifference} and  \ref{ncmdifference} (denoted as Lemmas \ref{cmdifnew} and \ref{ncmdiffnew}) under the boundedness assumption, using which it can be shown that the same rates as we obtained under Assumption \ref{as:5} hold even under the boundedness assumption. 
The comparison between the two assumptions highlights their different applicability to stochastic processes. Assumption~\ref{as:5} is well-suited for non-degenerate Gaussian processes due to their well-controlled moments. On the other hand, the assumption of boundedness in \( \|X\|^2 \) will hold for bounded processes but will fail to hold for a non-degenerate Gaussian process.
\end{remark}

\begin{remark}\label{rem}
For an index function $g: \mathbb{R} \mapsto \mathbb{R}$, the single index model is defined as
 \begin{equation*}
        Y = g\left(\int_{S} X(t) \beta^*(t) dt\right) + \epsilon.
    \end{equation*}
A detailed study of estimating $\beta^*$ in a single-index model has been carried out in \cite{muller2005generalized,james2002generalized,chen2011single,shang2015single,wang2011single}. In \cite{balasubramanian2024functional}, the authors showed that if $X$ is sampled from a Gaussian process, then under some regularity conditions on $g$, $\beta^*$ in the single-index model satisfies the operator equation of the FLR model, i.e., $\mathbb{E}[XY]=C\beta^*$. This means the results obtained in this paper will address the question of estimating $\beta^*$ in a single-index model if $X$ is sampled from a Gaussian process, thereby sharpening and generalizing the results of \cite{balasubramanian2024functional}.
\end{remark}

In the following, we provide examples of kernels and covariance functions that commute, while the non-commutative setup was already discussed in \cite[Section 4]{balasubramanian2024functional}. Suppose $K(x,y)=\sum_i \lambda_i \phi_i(x)\phi_i(y)$ and $C(x,y)=\sum_i \mu_i \psi_i(x)\psi_i(y)$ where $(\phi_i)_i$ and $(\psi_j)_j$ are orthonormal systems in $L^2(S).$ Then $(\lambda_i,\phi_i)_i$ and $(\mu_i,\psi_i)_i$ are the eigensystems of $T$ and $C$, respectively. It is easy to verify that $T$ and $C$ commute if $\phi_i=\psi_i$ for all $i$. For example, choosing $\phi_i(s)=\psi_i(s)=\sin\left(\left(i-\frac{1}{2}\right)\pi s\right), s\in [0,1]$, we have
\begin{align*}C(s,t) &= \sum_{n \geq 1} \frac{2}{(n-\frac{1}{2})^2 \pi^2} \sin\left(\left(n-\frac{1}{2}\right)\pi s\right) \sin\left(\left(n-\frac{1}{2}\right)\pi t\right) \\
&= E_{1}\left(\frac{s+t}{2}\right) - E_{1}\left(\frac{|s-t|}{2}\right) = s\wedge t,\end{align*}
which is the Brownian motion covariance function 
and 
\begin{align*}K(s,t) &= \sum_{n \geq 1} \frac{2}{(n-\frac{1}{2})^4 \pi^4} \sin\left(\left(n-\frac{1}{2}\right)\pi s\right) \sin\left(\left(n-\frac{1}{2}\right)\pi t\right) \\ &= \frac{2}{3}\left(E_{3}\left(\frac{|s-t|}{2}\right) - E_{3}\left(\frac{s+t}{2}\right)\right)
 = \frac{1}{12}|s-t|^3- \frac{1}{12}(s+t)^3 + st,\end{align*}
where 
$$E_{2n-1}(x) = \frac{(-1)^n 4 (2n-1)!}{\pi^{2n}} \sum_{k = 0}^{\infty} \frac{\cos((2k+1)\pi x)}{(2k+1)^{2n}}, ~ \forall n \in \mathbb{N} \text{ and } x\in[0,1],$$
is the Euler polynomial of degree $2n-1$ (see \cite[Section 1.2.1] {euler1996representation}).

\cite[Section 4]{balasubramanian2024functional} discusses examples of kernels and covariance functions, where $\phi_i$'s are considered as Fourier basis and $\psi_i$'s are considered as Haar basis on $[0,1]$, resulting in a non-commutative system.

\section{Lower Bounds}
\label{lower bounds}
In this section, we establish lower bounds for each of the cases discussed in the previous section and show them to match the upper bounds, thereby confirming the optimality of the proposed regularized estimator of $\beta^*$. The idea of the proof is inspired by the work of Cai and Yuan \cite{tonyyuan2012minimax}.   
To derive the lower bounds, we need to discuss the divergence between two probability measures $P_1$ and $P_2$ defined on a measurable space $(\mathcal{X}, \mathcal{A})$.  In our analysis, we use \textbf{Kullback-Leibler divergence} which is defined as : 
\begin{equation*}
    \mathcal{K}(P_1, P_2) :=
    \begin{cases}
         \displaystyle \int\limits_\mathcal{X} \log\left(\frac{dP_1}{dP_2}\right) dP_1 & \mbox{ if } P_1 \ll P_2;\\
        +\infty & \mbox{  } \text{ otherwise},
        \end{cases}
\end{equation*}
where $P_1 \ll P_2$ denotes $P_1$ is absolutely continuous with respect to $P_2$.

The key concept to derive the lower bounds follows from \cite[Chapter 2]{tsyback2009lb}, where the major task is to find $N+1$ elements $\{\theta_{0}', \ldots, \theta_{N}'\}$ from a hypothesis space such that the distance between any two of them is at least $2r$ for some constant $r>0$. Furthermore, associated with  each $\theta_{j}'$, we have a distribution $P_{\theta_{j}'}$, and the mean of the Kullback-Leibler divergence between  distributions $P_{\theta_{j}'}$ and $P_{\theta_{0}'}$ should grow at most logarithmically. Consequently, we get that for large $N$, with high probability, the distance between  $\theta_{j}'$ and
any estimator $\hat{\theta}$ will be at least $r$. For easy reference, we state the key theorem \cite[Theorem 2.5]{tsyback2009lb} below.

Let  $\Theta'$ be a non-empty set. Suppose $d : \Theta' \times \Theta' \to [0, \infty)$ be a semi-distance function and $\mathcal{P} = \{P_{\theta} : \theta \in \Theta'\}$ be the set of probability measures on $(\mathcal{X}, \mathcal{A})$  indexed by $\Theta'$.
\begin{theorem}\cite[Theorem 2.5]{tsyback2009lb}
\label{tsyback theorem}
    Assume that $N >2$ and suppose that the hypothesis space $\Theta'$ contains elements $\theta_{0}', \theta_{1}', \ldots, \theta_{N}'$ such that:
    \begin{enumerate}
        \item $d(\theta_{j}', \theta_{k}' ) \geq 2r >0 \quad \forall~ 0\leq j< k \leq N$;
        \item $P_{\theta_{j}'} \ll P_{\theta_{0}'} \quad \forall j =1,2,\ldots,N,$ and
        $$\frac{1}{N} \sum_{j=1}^{N}\mathcal{K}(P_{\theta_{j}'},P_{\theta_{0}'} ) \leq u \log N$$
        for some $0 < u <\frac{1}{8}$. Then
    \end{enumerate}
    $$\inf_{\hat{\theta}}\sup_{\theta\in \Theta'} P_{\theta}(d(\theta,\hat{\theta})\geq r ) \geq \frac{\sqrt{N}}{1+\sqrt{N}}\left(1-2u-\sqrt{\frac{2u}{\log N}}\right),$$
    where the infimum is taken over all estimators  $\hat{\theta}$ of $\theta\in\Theta'$.
\end{theorem}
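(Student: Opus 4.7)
The plan is to reduce the minimax estimation problem to a multiple hypothesis testing problem and then apply an information-theoretic lower bound of Fano type, specialized to the likelihood-ratio test. The key insight is that the $2r$-separation of the hypotheses converts distance errors into classification errors, after which KL control on the laws can be exploited.

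The first step is the reduction to testing. For any estimator $\hat{\theta}$, define the minimum-distance test
\[
\psi^*(\hat{\theta}) \in \arg\min_{0\le j \le N} d(\hat{\theta},\theta_j').
\]
If the true parameter is $\theta_j'$ and $\psi^*(\hat{\theta}) = k \ne j$, then by the triangle inequality
\[
2r \le d(\theta_j',\theta_k') \le d(\hat{\theta},\theta_j') + d(\hat{\theta},\theta_k') \le 2\,d(\hat{\theta},\theta_j'),
\]
using the defining property of $\psi^*$. Hence $\{\psi^*\ne j\}\subset\{d(\hat{\theta},\theta_j')\ge r\}$, and consequently
\[
\inf_{\hat{\theta}}\sup_{\theta\in\Theta'} P_{\theta}(d(\hat{\theta},\theta)\ge r) \;\ge\; \inf_{\psi}\max_{0\le j\le N} P_{\theta_j'}(\psi\ne j),
\]
where the inner infimum is over all tests $\psi$ taking values in $\{0,\dots,N\}$.

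The second step is to lower-bound the minimax testing error. Here I would follow Tsybakov's argument: for each $j\ge 1$ consider the likelihood ratio $L_j := dP_{\theta_j'}/dP_{\theta_0'}$ and the event $\mathcal{B}_j := \{L_j \le 1/\sqrt{N}\}$. On $\mathcal{B}_j^c$ the ratio is not too small, while $P_{\theta_0'}(\mathcal{B}_j^c) \le 1/\sqrt{N}$ by Markov, which forces a test choosing $0$ too often to err on $\theta_j'$. Quantitatively, for any test $\psi$,
\[
P_{\theta_j'}(\psi\ne j) \;\ge\; \tfrac{1}{\sqrt{N}}\bigl[P_{\theta_0'}(\psi \ne j) - P_{\theta_0'}(\mathcal{B}_j)\bigr],
\]
and summing over $j$ and combining with $P_{\theta_0'}(\psi \ne 0)$ produces the prefactor $\sqrt{N}/(1+\sqrt{N})$. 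The KL hypothesis then controls the average of $P_{\theta_0'}(\mathcal{B}_j)$: because $\mathcal{B}_j = \{\log L_j \le -\tfrac12\log N\}$, an application of the data-processing/variational characterisation of KL (or the Donsker–Varadhan bound) converts $\frac{1}{N}\sum_{j=1}^N\mathcal{K}(P_{\theta_j'},P_{\theta_0'})\le u\log N$ into a bound of order $2u + \sqrt{2u/\log N}$ on the relevant average.

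Combining the two ingredients yields the stated lower bound
\[
\frac{\sqrt{N}}{1+\sqrt{N}}\Bigl(1-2u-\sqrt{\tfrac{2u}{\log N}}\Bigr).
\]
The routine pieces are the triangle-inequality reduction and the Markov step. The main obstacle, and the place requiring genuine care, is the last conversion: one must pass from a bound on the averaged KL divergence to a bound on the averaged probabilities of the small-likelihood-ratio events $\mathcal{B}_j$, and this is exactly where the constant $\tfrac18$ for $u$ enters. I would handle this either via Pinsker together with a second-moment control, or, cleaner, by the variational inequality $P_{\theta_0'}(A)\ge \exp\{-(\mathcal{K}+\log 2)/P_{\theta_j'}(A)\}$ applied to $A = \mathcal{B}_j^c$ and then averaged with Jensen over $j$; the resulting algebra produces the explicit remainder $\sqrt{2u/\log N}$.
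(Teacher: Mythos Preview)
The paper does not prove this statement at all; it is quoted verbatim from Tsybakov's monograph (the citation \cite[Theorem~2.5]{tsyback2009lb}) and used as a black box to obtain the lower bounds in Theorems~\ref{lowercomm} and~\ref{lowerNC}. There is therefore no ``paper's own proof'' to compare against.

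Your outline follows the standard Tsybakov route---reduction to a multiple testing problem via the minimum-distance rule, followed by a likelihood-ratio argument with a threshold event---and is broadly on track. One concrete slip: the claim ``$P_{\theta_0'}(\mathcal{B}_j^c)\le 1/\sqrt{N}$ by Markov'' is false as written, since $E_{P_{\theta_0'}}[L_j]=1$ and Markov then gives only the trivial bound $\sqrt{N}$. What actually drives the argument is the inequality $P_{\theta_j'}(\psi\ne j)\ge N^{-1/2}\bigl[P_{\theta_0'}(\psi\ne j)-P_{\theta_0'}(\mathcal{B}_j)\bigr]$ (which you state correctly), together with the averaged control of $P_{\theta_0'}(\mathcal{B}_j)$ coming from the KL hypothesis. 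The Markov remark is a stray aside and should be deleted; with that correction, and with the final KL-to-probability step carried out carefully (this is where the constraint $u<1/8$ enters), your sketch reproduces Tsybakov's proof.
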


The next lemma, which ensures the existence of $N+1$ elements in $\{0,1\}^{M}$ with Hamming distance at least $\frac{M}{8}$, will be helpful to construct $N+1$ elements in a hypothesis space with the desired distance.
\begin{lemma}[Varshamov-Gilbert bound \cite{tsyback2009lb}] 
\label{VGbound}
Let $M \geq 8$. Then there exists a subset $\Theta = \{\theta^{(0)},\ldots,\theta^{(N)}\} \subset \{0,1\}^{M}$ such that $\theta^{(0)}=(0,\cdots,0)$,
\begin{equation*}
    H(\theta,\theta^{'}) > \frac{M}{8}, \quad \forall ~~ \theta \neq \theta^{'} \in \Theta,
\end{equation*}
where $\displaystyle H(\theta, \theta^{'}) = \sum_{i=1}^{M}(\theta_{i}-\theta^{'}_{i})^2 $ is the Hamming distance and $N \geq 2^{\frac{M}{8}}$.
\end{lemma}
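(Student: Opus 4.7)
The plan is to prove the existence of $\Theta$ via the probabilistic method, which is the standard route to Varshamov--Gilbert. I fix $\theta^{(0)} = (0,\ldots,0)$ and sample $\theta^{(1)},\ldots,\theta^{(N)}$ independently and uniformly from $\{0,1\}^M$. I then show that, for a suitable choice of $N$ growing like $2^{M/8}$, the probability that every one of the $\binom{N+1}{2}$ pairwise Hamming distances strictly exceeds $M/8$ is positive; any realization in this event yields the desired deterministic configuration.

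The key distributional observation I would use is that for any $i\neq j$ (both at least $1$) the coordinate indicators $\mathbf{1}[\theta^{(i)}_\ell \neq \theta^{(j)}_\ell]$ are i.i.d.\ Bernoulli$(1/2)$, so $H(\theta^{(i)},\theta^{(j)})\sim \mathrm{Binomial}(M,1/2)$; for pairs involving $\theta^{(0)}=0$ this reduces to the Hamming weight of a uniform vector, which has the same law. Hoeffding's inequality then yields the single-pair tail bound
\begin{equation*}
\mathbb{P}\bigl(H(\theta^{(i)},\theta^{(j)}) \leq M/8\bigr) \;=\; \mathbb{P}\!\left(\frac{H}{M}-\frac{1}{2} \leq -\frac{3}{8}\right) \;\leq\; \exp\!\bigl(-9M/32\bigr),
\end{equation*}
after which a union bound over the at most $(N+1)^2/2$ pairs gives
\begin{equation*}
\mathbb{P}\bigl(\exists\, i\neq j:\, H(\theta^{(i)},\theta^{(j)}) \leq M/8\bigr) \;\leq\; \tfrac{(N+1)^2}{2}\exp\!\bigl(-9M/32\bigr).
\end{equation*}

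Finally, I would choose $N+1 \leq 2\cdot 2^{M/8}$, so that $(N+1)^2/2 \leq 2\exp(M\ln 2/4)$, reducing the upper bound to $2\exp\!\bigl(M(\tfrac{\ln 2}{4}-\tfrac{9}{32})\bigr)$; since $\tfrac{\ln 2}{4}\approx 0.173 < \tfrac{9}{32}\approx 0.281$, this is strictly less than $1$ for every $M\geq 8$, and consequently a deterministic set $\Theta = \{\theta^{(0)},\ldots,\theta^{(N)}\}$ with the stated separation property exists. The point that requires the most care is this final constant tuning: the Hoeffding exponent must comfortably dominate the combinatorial growth rate from the $\binom{N+1}{2}$ factor, and one must check that the inequality is already non-trivial at the endpoint $M=8$ (it is, and it only improves with $M$). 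As a fallback if the Hoeffding slack is too tight, one could instead run the standard greedy/volume argument: iteratively pick a new vector outside the Hamming balls of radius $M/8$ around the previously chosen ones, using the entropy volume bound $\sum_{k\leq M/8}\binom{M}{k}\leq 2^{M H_2(1/8)}$ with $H_2(1/8)\approx 0.544 < 7/8$ to guarantee at least $2^{M/8}+1$ successful choices. Either route produces the advertised packing.
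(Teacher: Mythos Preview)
The paper does not prove this lemma at all; it is stated as a classical result and cited from Tsybakov's book without proof, so there is no ``paper's own proof'' to compare against. Your probabilistic-method argument (Hoeffding for a single pair, then a union bound over $\binom{N+1}{2}$ pairs, with the constant check $\ln 2/4 < 9/32$) is correct and is one of the two standard proofs of the Varshamov--Gilbert bound; the greedy/volume alternative you sketch as a fallback is the other, and is in fact the version Tsybakov presents. Either is fine here.
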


\noindent
The following result (proved in Section~\ref{pcomm-low}) provides lower bounds associated with Theorems~\ref{comm.estimation} and \ref{predictionbasic}, i.e., estimation and prediction errors, respectively in the commutative setting. 

\begin{theorem}[Lower bound--Commutative]
\label{lowercomm}
    Suppose Assumptions \ref{as:1} and \ref{as:3} hold. Then for $\alpha \geq \frac{1}{2}$, we have
    \begin{equation*}
        \lim_{a \to 0} \lim_{n \to \infty} \inf_{\hat{\beta}} \sup_{\beta^* \in \mathcal{R}(T^{\alpha})} \mathbb{P}\left\{\|\hat{\beta}-\beta^*\|_{\mathcal{H}} \geq a n^{-\frac{t(\alpha-\frac{1}{2})}{1+c+2 \alpha t}}\right\} =1.
    \end{equation*}
Further for $\alpha >0$, we have
\begin{equation*}
        \lim_{a \to 0} \lim_{n \to \infty} \inf_{\hat{\beta}} \sup_{\beta^* \in \mathcal{R}(T^{\alpha})} \mathbb{P}\left\{\|\hat{\beta}-\beta^*\|_{L^2} \geq a n^{-\frac{\alpha t}{1+c+2 \alpha t}}\right\} =1
    \end{equation*}
and
\begin{equation*}
    \lim_{a \to 0} \lim_{n \to \infty} \inf_{\hat{\beta}} \sup_{\beta^* \in \mathcal{R}(T^{\alpha})} \mathbb{P}\left\{\mathbb{E}\langle\hat{\beta}-\beta^*, X\rangle_{L^2}^2 \geq a n^{-\frac{2\alpha t +c}{1+c+2 \alpha t}}\right\} =1,
\end{equation*}
where the infimum is taken over all possible estimators.
\end{theorem}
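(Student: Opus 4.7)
The plan is to apply Tsybakov's reduction (Theorem~\ref{tsyback theorem}) with a family of hypotheses built from high-frequency eigenfunctions of $T$, as is standard for nonparametric lower bounds \cite{tonyyuan2012minimax,tsyback2009lb}. Because $T$ and $C$ commute, they share an orthonormal eigenbasis $\{\phi_i\}$ of $L^2(S)$ with eigenvalues $\mu_i \asymp i^{-t}$ and $\xi_i \asymp i^{-c}$ from Assumption~\ref{as:3}. For the statistical experiment I take $X$ to be a zero-mean Gaussian process with covariance $C$, independent noise $\epsilon \sim N(0,\sigma^2)$, and $Y = \langle X,\beta^*\rangle_{L^2}+\epsilon$; the Gaussian design satisfies Assumption~\ref{as:5} automatically, and the pair $(X,Y)$ is jointly Gaussian so the KL divergence between two such laws admits a closed form.

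First, fix $M\ge 8$ and apply the Varshamov--Gilbert bound (Lemma~\ref{VGbound}) to obtain $\Theta=\{\theta^{(0)},\ldots,\theta^{(N)}\}\subset\{0,1\}^M$ with $\theta^{(0)}=0$, pairwise Hamming distance $\ge M/8$, and $N\ge 2^{M/8}$. For each $\theta\in\Theta$, set
\begin{equation*}
  \beta_\theta^{*} \;=\; \epsilon_M\sum_{j=1}^{M}\theta_j\,\mu_{M+j}^{\alpha}\,\phi_{M+j},
\end{equation*}
so that $\beta_\theta^{*}=T^{\alpha}h_\theta$ with $h_\theta=\epsilon_M\sum_j\theta_j\phi_{M+j}\in L^2(S)$; hence $\beta_\theta^{*}\in\mathcal{R}(T^\alpha)$, and choosing $\epsilon_M^2\asymp M^{-1}$ keeps $\|h_\theta\|_{L^2}$ uniformly bounded. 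Using $\mu_{M+j}\asymp M^{-t}$, $\xi_{M+j}\asymp M^{-c}$, and the Hamming lower bound on pairwise differences, one computes for $\theta\ne\theta'$,
\begin{equation*}
  \|\beta_\theta^{*}-\beta_{\theta'}^{*}\|_{L^2}^{2} \asymp M^{-2\alpha t},\quad
  \|C^{\frac{1}{2}}(\beta_\theta^{*}-\beta_{\theta'}^{*})\|_{L^2}^{2}\asymp M^{-2\alpha t-c},\quad
  \|\beta_\theta^{*}-\beta_{\theta'}^{*}\|_{\mathcal{H}}^{2}\asymp M^{-(2\alpha-1)t},
\end{equation*}
where the $\mathcal{H}$-norm identity uses $\langle\phi_i,\phi_j\rangle_{\mathcal{H}}=\mu_i^{-1}\delta_{ij}$ and requires $\alpha\ge 1/2$ so that $\beta_\theta^{*}\in\mathcal{H}$.

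For the KL step, since $P_X$ is common to all hypotheses and $Y|X$ is Gaussian with matched variances, the chain rule gives $\mathcal{K}(P^{n}_{\theta},P^{n}_{0})=\frac{n}{2\sigma^2}\|C^{1/2}\beta_\theta^{*}\|_{L^2}^{2}\lesssim n\,M^{-2\alpha t-c}$ uniformly in $\theta$. Tsybakov's condition $\overline{\mathcal{K}}\le u\log N$ together with $\log N\ge (M\log 2)/8$ forces $n M^{-2\alpha t-c}\lesssim M$, which I saturate (up to constants) by taking $M=\lfloor c_1 n^{1/(1+c+2\alpha t)}\rfloor$, with $c_1$ chosen small enough that $u<1/8$. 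Plugging this $M$ into the three separation formulas yields separations of exact orders $n^{-\alpha t/(1+c+2\alpha t)}$ in $L^2$, $n^{-(2\alpha t+c)/(1+c+2\alpha t)}$ in prediction, and $n^{-t(\alpha-\frac{1}{2})/(1+c+2\alpha t)}$ in $\mathcal{H}$. Applying Theorem~\ref{tsyback theorem} with the corresponding semidistance then produces each of the three asserted $\liminf$-type statements, since $\sqrt{N}/(1+\sqrt{N})\to 1$ and $\sqrt{2u/\log N}\to 0$ as $n\to\infty$.

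The main obstacle is really bookkeeping: a single hypothesis family must simultaneously optimize three different error metrics against one shared KL budget, and one needs the balancing choice of $M$ to be optimal for all three. This works here precisely because the KL bound and each of the three separation norms scale as clean powers of $M$, so the same $M\asymp n^{1/(1+c+2\alpha t)}$ saturates the constraint for every norm. A secondary technicality is that the $\mathcal{H}$-norm bound formally requires $\beta_\theta^{*}\in\mathcal{H}$, which is why it is restricted to $\alpha\ge 1/2$; for $0<\alpha<1/2$ the test functions need not belong to $\mathcal{H}$ and no $\mathcal{H}$-norm lower bound is claimed.
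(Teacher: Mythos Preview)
Your proposal is correct and follows essentially the same approach as the paper: both build hypotheses from the block of eigenfunctions $\phi_{M+1},\ldots,\phi_{2M}$ via $\beta_\theta^{*}=M^{-1/2}\sum_k\theta_k T^{\alpha}\phi_{M+k}$ (your $\epsilon_M\mu_{M+j}^{\alpha}$ is exactly $M^{-1/2}T^{\alpha}$ applied to $\phi_{M+j}$), compute the KL divergence via the Gaussian conditional $Y\mid X$, balance $nM^{-2\alpha t-c}\asymp M$ to get $M\asymp n^{1/(1+c+2\alpha t)}$, and then read off the three separation rates from the Varshamov--Gilbert packing. The only cosmetic difference is that you additionally take $X$ Gaussian (the paper only needs $\epsilon$ Gaussian, since the KL formula follows from the shared marginal of $X$ and Gaussian conditionals), but this is harmless for a lower bound.
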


\noindent
The following result (proved in Section~\ref{pcomm-lower2}) provides lower bounds on the estimation and prediction errors in the non-commutative setting. Combined with Theorems \ref{NCestimation} and \ref{predictionNC}, Theorem \ref{lowerNC} ensures that the convergence rates achieved in the non-commutative setting are optimal.
\begin{theorem}[Lower bound--Non-commutative]
\label{lowerNC}
Suppose Assumptions \ref{as:2} and \ref{as:4} hold. Then for any $s>0$, we have
\begin{equation*}
        \lim_{a \to 0} \lim_{n \to \infty} \inf_{\hat{\beta}} \sup_{\beta^* \in \mathcal{R}(T^{\frac{1}{2}}\Lambda^s)} \mathbb{P}\left\{\|\hat{\beta}-\beta^*\|_{\mathcal{H}} \geq a n^{-\frac{sb}{1+b+2 sb}}\right\} =1
    \end{equation*}
and 
\begin{equation*}
    \lim_{a \to 0} \lim_{n \to \infty} \inf_{\hat{\beta}} \sup_{\beta^* \in \mathcal{R}(T^{\frac{1}{2}}\Lambda^s)} \mathbb{P}\left\{\mathbb{E}\langle\hat{\beta}-\beta^*, X\rangle_{L^2}^2 \geq a n^{-\frac{b(2s+1)}{1+b+2 sb}}\right\} =1,
\end{equation*}
where the infimum is taken over all possible estimators.
\end{theorem}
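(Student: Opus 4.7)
}
The plan is to apply Theorem~\ref{tsyback theorem} (Tsybakov) in combination with the Varshamov–Gilbert bound (Lemma~\ref{VGbound}), following the strategy of \cite{tonyyuan2012minimax}. The natural basis to work with in the non-commutative setting is the eigensystem $\{(\tau_i, e_i)\}$ of $\Lambda = T^{\frac12}CT^{\frac12}$, because both quantities to be lower-bounded can be expressed diagonally in this basis. Namely, writing a candidate slope as $\beta = T^{\frac12}\Lambda^{s} h$ with $h = \sum_i c_i e_i \in L^2(S)$, one has, using that $T^{\frac12}$ is a partial isometry from $L^2(S)$ into $\mathcal{H}$ and the identity $\|C^{\frac12}T^{\frac12}f\|_{L^2}^2 = \langle \Lambda f,f\rangle_{L^2}$,
\begin{equation*}
\|\beta\|_{\mathcal{H}}^2 \;=\; \|\Lambda^{s} h\|_{L^2}^2 \;=\; \sum_i c_i^2 \tau_i^{2s}, \qquad \|C^{\frac12}\beta\|_{L^2}^2 \;=\; \|\Lambda^{s+\frac12} h\|_{L^2}^2 \;=\; \sum_i c_i^2 \tau_i^{2s+1}.
\end{equation*}

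Next, I would fix an integer $M\ge 8$ and, by Lemma~\ref{VGbound}, select $\Theta=\{\theta^{(0)},\dots,\theta^{(N)}\}\subset\{0,1\}^M$ with $N\ge 2^{M/8}$ and pairwise Hamming distance $>M/8$. For a scalar $\gamma>0$ to be chosen, set
\begin{equation*}
h^{(\theta)} \;=\; \gamma \sum_{i=M+1}^{2M} \theta_i\, e_i, \qquad \beta^{*(\theta)} \;=\; T^{\frac12}\Lambda^{s} h^{(\theta)}.
\end{equation*}
Using Assumption~\ref{as:4}, which gives $\tau_i \asymp i^{-b}$, and the Hamming separation, the two separations come out as
\begin{equation*}
\|\beta^{*(\theta)}-\beta^{*(\theta')}\|_{\mathcal{H}}^2 \;\gtrsim\; \gamma^2 M \cdot M^{-2sb}, \qquad \|C^{\frac12}(\beta^{*(\theta)}-\beta^{*(\theta')})\|_{L^2}^2 \;\gtrsim\; \gamma^2 M \cdot M^{-(2s+1)b},
\end{equation*}
while $\|h^{(\theta)}\|_{L^2}^2 \lesssim \gamma^2 M$, so uniform membership in the source class requires $\gamma^2 M \lesssim 1$.

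For the KL control I would take the design to be Gaussian (consistent with Assumption~\ref{as:5}) with covariance operator $C$ and independent Gaussian noise of variance $\sigma^2$. The tensorized KL divergence between the laws of $\{(X_i,Y_i)\}_{i=1}^n$ then reduces to
\begin{equation*}
\mathcal{K}(P^{\otimes n}_{\beta^{*(\theta)}},P^{\otimes n}_{\beta^{*(\theta^{0})}}) \;=\; \frac{n}{2\sigma^2}\,\|C^{\frac12}(\beta^{*(\theta)}-\beta^{*(\theta^{0})})\|_{L^2}^2 \;\lesssim\; \frac{n\gamma^2 M \cdot M^{-(2s+1)b}}{\sigma^2},
\end{equation*}
which must be bounded by $u \log N \asymp u M$ for some $u\in(0,\tfrac18)$; this gives the constraint $n\gamma^2 \lesssim M^{(2s+1)b}$. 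Optimizing the separation subject to both $\gamma^2 M \lesssim 1$ and $\gamma^2 \lesssim M^{(2s+1)b}/n$, the active constraint at the balance point is the KL one, and taking $\gamma^2 \asymp M^{(2s+1)b}/n$ together with $M \asymp n^{1/(1+b+2sb)}$ (which also satisfies $\gamma^2 M \lesssim 1$) yields
\begin{equation*}
\|\beta^{*(\theta)}-\beta^{*(\theta')}\|_{\mathcal{H}}^2 \;\gtrsim\; n^{-\frac{2sb}{1+b+2sb}}, \qquad \|C^{\frac12}(\beta^{*(\theta)}-\beta^{*(\theta')})\|_{L^2}^2 \;\gtrsim\; n^{-\frac{b(2s+1)}{1+b+2sb}}.
\end{equation*}
Applying Theorem~\ref{tsyback theorem} with $d$ being the RKHS distance (resp.\ the $C^{\frac12}$-seminorm distance) then delivers the two displayed lower bounds.

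The main technical obstacle I expect is keeping the construction genuinely inside $\mathcal{R}(T^{\frac12}\Lambda^s)$ with a uniform bound on $\|h^{(\theta)}\|_{L^2}$ while simultaneously saturating the KL budget, which is why the packing is placed in the frequency block $\{M+1,\dots,2M\}$ rather than in $\{1,\dots,M\}$: this localization makes the eigenvalues $\tau_i$ comparable across the block and avoids logarithmic losses in the sums, and it is what lets the two constraints ($\gamma^2 M\lesssim 1$ from the source condition and $n\gamma^2\lesssim M^{(2s+1)b}$ from the KL budget) be balanced cleanly at $M\asymp n^{1/(1+b+2sb)}$. A secondary point requiring care is justifying the Gaussian KL formula for the joint law of $(X,Y)$; since the adversary gets to choose the worst-case distribution, restricting to a Gaussian design is legitimate, and the conditional-Gaussian computation then gives the clean variance formula used above.
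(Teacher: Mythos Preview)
Your proposal is correct and follows essentially the same route as the paper: both use the eigensystem of $\Lambda$, place the packing in the frequency block $\{M+1,\dots,2M\}$, invoke Varshamov--Gilbert, control the KL via Gaussian noise and the identity $\mathcal{K}=\tfrac{n}{2\sigma^2}\|C^{1/2}(\cdot)\|_{L^2}^2$, and balance at $M\asymp n^{1/(1+b+2sb)}$. The only cosmetic difference is that the paper fixes the amplitude to $M^{-1/2}$ from the outset (saturating the source constraint), whereas you carry $\gamma$ as a free parameter and recover $\gamma^2\asymp M^{-1}$ at the balance point; also, Gaussianity of $X$ is not needed for the KL formula (only $\epsilon\sim N(0,\sigma^2)$ is used), though assuming it does no harm.
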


\textcolor{black}{
\section{Comparison to the Learning Theory Approach} \label{Sec:similar}
In this section, we provide a detailed discussion of the similarities and differences between $\mathbf{(I)}$ the learning theory approach for FLR, $\mathbf{(II)}$ the RKHS-based approach for FLR (assuming $\beta^*\in \mathcal{H})$, and $\mathbf{(III)}$ nonparametric regression in RKHS. }

\textcolor{black}{Consider the setting of $\mathbf{(II)}$ which satisfies the normal equation $J^*\mathbb{E}[X\otimes_{L^2}X]J\beta^*=J^*\mathbb{E}[XY]$ where $\beta^*$ is the minimizer of the following problem:
\begin{equation}\label{Eq:II}
\beta^*=\arg\inf_{\beta\in\mathcal{H}} \mathbb{E}(Y-\langle X,\beta\rangle_{L^2(S)})^2.
\end{equation}
Note that replacing the minimization over $\mathcal{H}$ by $L^2(S)$ precisely yields the normal equation $\mathbb{E}[X\otimes_{L^2}X]\beta^*=\mathbb{E}[XY]$ which corresponds to that of $\mathbf{(I)}$. Since $\beta\in\mathcal{H}$ is equivalent to $\beta\in \mathcal{R}(T^{1/2})$, there exists $\zeta\in L^2(S)$ such that $T^{1/2}\zeta=J\beta$ so that \eqref{Eq:II} can be alternately written as
\begin{equation*}
\zeta^*=\arg\inf_{\zeta\in L^2(S)} \mathbb{E}(Y-\langle X,T^{1/2}\zeta\rangle_{L^2(S)})^2=\arg\inf_{\zeta\in L^2(S)} \mathbb{E}(Y-\langle T^{1/2}X,\zeta\rangle_{L^2(S)})^2,
\end{equation*}
where $J\beta^*=T^{1/2}\zeta^*$. This is equivalent to solving an FLR problem of type $\mathbf{(I)}$ with $X$ being replaced by $T^{1/2}X$ so that the resulting normal equation is given by $\mathbb{E}[T^{1/2}X\otimes_{L^2}T^{1/2}X]\zeta^*=\mathbb{E}[T^{1/2}XY]$, i.e., $T^{1/2}CT^{1/2}\zeta^*=T^{1/2}\mathbb{E}[XY]$.} 

\textcolor{black}{Based on these normal equations, we can define the estimators of $\beta^*$ in the settings of $\mathbf{(I)}$ and $\mathbf{(II)}$ as 
$g_\lambda(\hat{C})\hat{\mathbb{E}}[XY]$ and $T^{1/2}g_\lambda(T^{1/2}\hat{C}T^{1/2})T^{1/2}\hat{\mathbb{E}}[XY]$, respectively where $\hat{C}$ is the empirical estimator of $C$ and $\hat{\mathbb{E}}$ is the empirical expectation based on $n$ samples. Based on the general convergence theory developed for $\mathbf{(III)}$---of which $\mathbf{(I)}$ is a special case, and $\mathbf{(II)}$ being formally similar to $\mathbf{(I)}$---it follows that convergence rates for these estimators depend on the source conditions, $\beta^*\in\mathcal{R}(C^\gamma),\,\gamma>0$ and $\zeta^*\in \mathcal{R}((T^{1/2}CT^{1/2})^s),\,s>0$, i.e., $\beta^*\in \mathcal{R}(T^{1/2}(T^{1/2}CT^{1/2})^s),\,s>0$, respectively, and the eigenvalue-decay rate of $C$ and $T^{1/2}CT^{1/2}$, respectively. This observation about $\mathbf{(II)}$ matches with Assumptions~\ref{as:2} and \ref{as:4} using which we derived convergence rates of our estimator in the non-commutative setting.} 

\textcolor{black}{Given the discussion so far, it is reasonable to expect that the convergence rate analysis of $\mathbf{(I)}$ and $\mathbf{(II)}$ would follow the same steps, i.e., the proof of non-commutative case would match that of the learning theory approach, which in turn matches with that of $\mathbf{(III)}$ since as we highlighted in Section~\ref{sec:intro} that $\mathbf{(I)}$ is equivalent to non-parametric regression in RKHS with a linear kernel on $L^2(S)$, which is a special case of $\mathbf{(III)}$. However, there are some key differences that we highlight below.
\begin{itemize}
\item Though the proof strategy of the non-commutative case can be similar to that of $\mathbf{(I)}$ and $\mathbf{(III)}$, the corresponding rates are not comparable as highlighted in Remark~\ref{rem:compare1} because of the interaction between $T$ and $C$, which does not occur in $\mathbf{(I)}$.\vspace{1mm}
\item While the same proof strategy of the non-commutative case can be used in the commutative setting (i.e., $T$ and $C$ commute), we would like to highlight that the resulting rates will not be minimax optimal. Our modified proof strategy deeply exploits the commutative assumption to achieve optimal rates of convergence. But interestingly, we show in Remark~\ref{rem:compare} that the convergence rates of $\mathbf{(I)}$ and that of the commutative setting of $\mathbf{(II)}$ are equivalent. In addition, employing the proof strategy of the non-commutative case to the commutative case can only handle the source condition $\beta^*\in \mathcal{R}(T^{1/2})$, i.e., with $s=0$ while the commutative analysis deals with general source condition of $\beta^*\in\mathcal{R}(T^{\alpha}),\,\alpha>0$.\vspace{1mm}
\item Another key difference between our approach and $\mathbf{(I)}$ is Assumption~\ref{as:5}, i.e., the assumption about the stochastic process generating the input data, which is discussed in detail in Remark~\ref{comparetobounded}.
As mentioned in Remark~\ref{comparetobounded}, $\mathbf{(I)}$ assumes boundedness of the stochastic process, an assumption that is not valid for the non-degenerate Gaussian processes that are popularly considered in the statistical community. On the other hand, our analysis covers both cases when the process is (i) bounded as mentioned in Remark \ref{comparetobounded}, or (ii) having controlled moments such as non-degenerate Gaussian process via Assumption~\ref{as:5}.
 \vspace{1mm}
\item \textcolor{black}{Since $\mathbf{(I)}$ is a special case of $\mathbf{(III)}$, the lower bounds of $\mathbf{(I)}$ were provided in the literature by directly specializing the corresponding lower bounds for $\mathbf{(III)}$. In contrast, the lower bounds of $\mathbf{(II)}$ do not follow directly from $\mathbf{(III)}$ because of the interaction between $T$ and $C$. Hence, we highlight that the lower bound results provided in Theorems~\ref{lowercomm} and \ref{lowerNC} are new.
}
\end{itemize}
}

\section{Proofs}
\label{proofs}
The proofs of the main results of Sections \ref{main results} and \ref{lower bounds} are provided in this section.

\subsection{Proof of part \ref{a} in Theorem \ref{comm.estimation}} \label{pcomm-est1}
The proof involves decomposing $\|\hat{\beta}-\beta^*\|_{L^2}$ into several terms and bounding each of the terms separately. We start by considering the error term
	\begin{equation*}
        \begin{split}
	        \| \hat{\beta} - \beta^* \|_{L^2}  = & \| J \hat{\beta} -\beta^* \|_{L^2} 
            = \|Jg_{\lambda}(J^*\hat{C}_{n}J)J^*\hat{R}-\beta^*\|_{L^2} \\
	        = & \| T^{\frac{1}{2}}g_{\lambda}(\hat{\Lambda}_{n})T^{\frac{1}{2}}\hat{R}  -\beta^* \|_{L^2} \quad (\text{by Lemma~\ref{J T}}) \\
          \leq & \underbrace{\| T^{\frac{1}{2}}g_{\lambda}(\hat{\Lambda}_{n})(T^{\frac{1}{2}}\hat{R} - T^{\frac{1}{2}}\hat{C}_{n}\beta^*)\|_{L^2}}_{\textit{Term-1}} + \underbrace{\|T^{\frac{1}{2}}g_{\lambda}(\hat{\Lambda}_{n})T^{\frac{1}{2}}\hat{C}_{n}\beta^*-\beta^*\|_{L^2}}_{\textit{Term-2}}. 
         \end{split}
	\end{equation*}
 \textit{Bounding Term-1:}
 \begin{equation*}
 \begin{split}
     & \| T^{\frac{1}{2}}g_{\lambda}(\hat{\Lambda}_{n})(T^{\frac{1}{2}}\hat{R} - T^{\frac{1}{2}}\hat{C}_{n}\beta^*)\|_{L^2} \\
      \leq & \| T^{\frac{1}{2}}g_{\lambda}(\hat{\Lambda}_{n})(\hat{\Lambda}_{n}+\lambda I)^{\frac{1}{2}}\|_{L^2 \to L^2}\|(\hat{\Lambda}_{n}+\lambda I)^{-\frac{1}{2}}(\Lambda+\lambda I)^{\frac{1}{2}}\|_{L^2 \to L^2}\\
     & \times\|(\Lambda+\lambda I)^{-\frac{1}{2}}(T^{\frac{1}{2}}\hat{R} -T^{\frac{1}{2}}\hat{C}_{n}\beta^*)\|_{L^2}\\
     \lesssim & \| T^{\frac{1}{2}}(\Lambda+\lambda I)^{-\frac{1}{2}}\|_{L^2 \to L^2}\|(\Lambda+\lambda I)^{\frac{1}{2}}(\hat{\Lambda}_{n}+\lambda I)^{-\frac{1}{2}}\|_{L^2 \to L^2}\\
     & \times \|(\hat{\Lambda}_{n}+\lambda I)^{\frac{1}{2}}g_{\lambda}(\hat{\Lambda}_{n})(\hat{\Lambda}_{n}+\lambda I)^{\frac{1}{2}}\|_{L^2 \to L^2} \|(\Lambda+\lambda I)^{-\frac{1}{2}}(T^{\frac{1}{2}}\hat{R} -T^{\frac{1}{2}}\hat{C}_{n}\beta^*)\|_{L^2}\\
     \stackrel{(*)}{\lesssim_{p}} &  \| T^{\frac{1}{2}}(\Lambda+\lambda I)^{-\frac{1}{2}}\|_{L^2 \to L^2} \|(\Lambda+\lambda I)^{-\frac{1}{2}}(T^{\frac{1}{2}}\hat{R} -T^{\frac{1}{2}}\hat{C}_{n}\beta^*)\|_{L^2},
 \end{split}
 \end{equation*}
 where we used Lemma \ref{Covariance estimation} and the regularization properties $(\ref{reg1}), (\ref{reg2})$ in $(*)$. 
 Next, we apply Lemma \ref{Empirical bound} to conclude that
 \begin{equation}
     \begin{split}
           \| T^{\frac{1}{2}}g_{\lambda}(\hat{\Lambda}_{n})(T^{\frac{1}{2}}\hat{R} - T^{\frac{1}{2}}\hat{C}_{n}\beta^*)\|_{L^2}
          \lesssim_{p} & \sqrt{\frac{ \sigma^2 \mathcal{N}(\lambda)}{n}} \| T^{\frac{1}{2}}(\Lambda+\lambda I)^{-\frac{1}{2}}\|_{L^2 \to L^2} \nonumber\\
          \leq & \lambda^{-\frac{c}{2(t+c)}} \sqrt{\frac{ \sigma^2 \mathcal{N}(\lambda)}{n}},\nonumber
     \end{split}
 \end{equation}
where the last inequality follows from the fact that
 \begin{equation}
 \label{testimate}
 \begin{split}
     \| T^{\frac{1}{2}}(\Lambda+\lambda I)^{-\frac{1}{2}}\|_{L^2 \to L^2} & = \left(\sup_{i} \left|\frac{\mu_{i}}{\mu_{i} \xi_{i}+\lambda}\right|\right)^{\frac{1}{2}} \quad (\text{$T$ and $C$ commute})\\
     & \lesssim \left(\sup_{i} \left|\frac{i^{-t}}{i^{-(t+c)}+\lambda}\right|\right)^{\frac{1}{2}}\quad (\text{by Assumption }\ref{as:3}) \\
     & \lesssim \lambda^{\frac{t-(t+c)}{2(t+c)}} = \lambda^{-\frac{c}{2(t+c)}} \quad (\text{by Lemma~\ref{supbound}}).
 \end{split}
 \end{equation}
 \textit{Bounding Term-2:}
 As the bound on this term will depend on the source condition, the estimation has been considered under two cases over the range of $\alpha$.\\
\textbf{Case-1}: Suppose $\alpha\in(0,\frac{1}{2}]$. Consider
 \begin{equation*}
     \begin{split}
         & \|T^{\frac{1}{2}}g_{\lambda}(\hat{\Lambda}_{n})T^{\frac{1}{2}}\hat{C}_{n}\beta^*-\beta^*\|_{L^2}\\
         \leq & \| T^{\frac{1}{2}}(g_{\lambda}(\hat{\Lambda}_{n})-(\hat{\Lambda}_{n}+ \lambda I)^{-1})T^{\frac{1}{2}}\hat{C}_{n}\beta^*\|_{L^2} + \|T^{\frac{1}{2}}(\hat{\Lambda}_{n} + \lambda I)^{-1}T^{\frac{1}{2}}\hat{C}_{n}\beta^* -\beta^* \|_{L^2}\\
         \leq &  \underbrace{\|T^{\frac{1}{2}}((\hat{\Lambda}_{n}+ \lambda I)^{-1}T^{\frac{1}{2}}\hat{C}_{n}\beta^* - (\Lambda+ \lambda I)^{-1}T^{\frac{1}{2}}C\beta^*)\|_{L^2}}_{\textit{Term-2a}}\\
          & + \underbrace{\| T^{\frac{1}{2}}(g_{\lambda}(\hat{\Lambda}_{n})-(\hat{\Lambda}_{n}+ \lambda I)^{-1})T^{\frac{1}{2}}\hat{C}_{n}\beta^*\|_{L^2}}_{\textit{Term-2b}} + \underbrace{\|T^{\frac{1}{2}}(\Lambda + \lambda I)^{-1}T^{\frac{1}{2}}C\beta^* -\beta^*\|_{L^2}}_{\textit{Term-2c}}. 
     \end{split}
 \end{equation*}

\noindent
\textit{Bounding Term-2a:}
Define $\Tilde{\beta}:= (\Lambda + \lambda I)^{-1}T^{\frac{1}{2}}C\beta^* $ and $\Upsilon := \frac{t(1-2 \alpha)}{2(t+c)}$. Then
\begin{equation*}
    \begin{split}
        & \|T^{\frac{1}{2}}((\hat{\Lambda}_{n} + \lambda I)^{-1}T^{\frac{1}{2}}\hat{C}_{n}\beta^* -(\Lambda + \lambda I)^{-1}T^{\frac{1}{2}}C\beta^*)\|_{L^2} \\
        = & \|T^{\frac{1}{2}}(\hat{\Lambda}_{n} + \lambda I)^{-1}(T^{\frac{1}{2}}\hat{C}_{n}\beta^* - \hat{\Lambda}_{n}\Tilde{\beta}- \lambda \Tilde{\beta} )\|_{L^2} \\
        = & \|T^{\frac{1}{2}}(\hat{\Lambda}_{n} + \lambda I)^{-1}(T^{\frac{1}{2}} (C -\hat{C}_{n})(T^{\frac{1}{2}}\Tilde{\beta} -  \beta^*))\|_{L^2} \quad (\text{since } \lambda \tilde{\beta} = T^{\frac{1}{2}}C\beta^* - \Lambda \tilde{\beta} )\\
        \leq & \|T^{\frac{1}{2}}(\hat{\Lambda}_{n} + \lambda I)^{-\frac{1}{2}}\|_{L^2 \to L^2}\|(\hat{\Lambda}_{n} + \lambda I)^{-\frac{1}{2}}T^{\frac{1}{2}} (C -\hat{C}_{n})(T^{\frac{1}{2}}\Tilde{\beta} -  \beta^*)\|_{L^2}.
        \end{split}
        \end{equation*}
        From equation $(\ref{testimate})$ and the observation that $T^{\frac{1}{2}}\Tilde{\beta} - \beta^* = - \lambda (\Lambda + \lambda I)^{-1}\beta^*$, we have
        \begin{equation}
        \label{2b}
        \begin{split}
        & \|T^{\frac{1}{2}}((\hat{\Lambda}_{n} + \lambda I)^{-1}T^{\frac{1}{2}}\hat{C}_{n}\beta^* -(\Lambda + \lambda I)^{-1}T^{\frac{1}{2}}C\beta^*)\|_{L^2} \\
        \lesssim & \lambda \lambda^{-\frac{c}{2(t+c)}} \|(\hat{\Lambda}_{n} + \lambda I)^{-\frac{1}{2}}T^{\frac{1}{2}}(C -\hat{C}_{n})(\Lambda+ \lambda I)^{-1}\beta^*\|_{L^2}\\
        \lesssim & \sqrt{\lambda}\lambda^{-\frac{c}{2(t+c)}} \|(\Lambda + \lambda I)^{-\frac{1}{2}}T^{\frac{1}{2}}(C -\hat{C}_{n})(\Lambda+ \lambda I)^{-\frac{1}{2}}T^{\alpha}\|_{L^2 \to L^2}\|h\|_{L^2}\\
        \lesssim & \sqrt{\lambda}\lambda^{-\frac{c}{2(t+c)}} \|T^{\frac{1}{2}-\alpha}(\Lambda + \lambda I)^{-\Upsilon}T^{\alpha}(\Lambda + \lambda I)^{-(\frac{1}{2}-\Upsilon)}(C -\hat{C}_{n})(\Lambda+\lambda I)^{-\frac{1}{2}}T^{\alpha}\|_{L^2 \to L^2}\\
        \leq & \sqrt{\lambda}\lambda^{-\frac{c}{2(t+c)}} \lambda^{-\Upsilon} \|T^{\alpha}(\Lambda + \lambda I)^{-(\frac{1}{2}-\Upsilon)}(C-\hat{C}_{n})(\Lambda + \lambda I)^{-(\frac{1}{2}-\Upsilon)}T^{\alpha}\|_{L^2 \to L^2}.
    \end{split}
\end{equation}
By using $p= (\frac{1}{2}-\Upsilon)$ in Lemma \ref{cmdifference}, we get

\begin{equation*}
\label{empirical-real}
\|T^{\frac{1}{2}}(\hat{\Lambda}_{n} + \lambda I)^{-1}T^{\frac{1}{2}}\hat{C}_{n}\beta^* -T^{\frac{1}{2}}(\Lambda + \lambda I)^{-1}T^{\frac{1}{2}}C\beta^*\|_{L^2} \lesssim_{p}
         \frac{\lambda^{-\frac{1- \alpha t}{t+c}}}{\sqrt{n}}.
\end{equation*}

\noindent
\textit{Bound of Term-2b:} Consider
\begin{equation}
    \begin{split}
    \label{power3/2}
        & \| T^{\frac{1}{2}}(g_{\lambda}(\hat{\Lambda}_{n})-(\hat{\Lambda}_{n}+ \lambda I)^{-1})T^{\frac{1}{2}}\hat{C}_{n}\beta^*\|_{L^2}\\
        = & \| T^{\frac{1}{2}}(\hat{\Lambda}_{n}+ \lambda I)^{-1}((\hat{\Lambda}_{n}+ \lambda I)g_{\lambda}(\hat{\Lambda}_{n})-I)T^{\frac{1}{2}}\hat{C}_{n}\beta^*\|_{L^2}\\
        \leq & \| T^{\frac{1}{2}}(\hat{\Lambda}_{n}+ \lambda I)^{-1}((\hat{\Lambda}_{n}g_{\lambda}(\hat{\Lambda}_{n})-I) + \lambda g_{\lambda}(\hat{\Lambda}_{n}) )T^{\frac{1}{2}}\hat{C}_{n}\beta^*\|_{L^2} \\
        \leq & \| T^{\frac{1}{2}}(\Lambda+ \lambda I)^{-\frac{1}{2}}\|_{L^2 \to L^2} \|(\Lambda+ \lambda I)^{\frac{1}{2}}(\hat{\Lambda}_{n}+ \lambda I)^{-\frac{1}{2}}\|_{L^2 \to L^2} \|(\hat{\Lambda}_{n}+ \lambda I)^{-\frac{3}{2}}T^{\frac{1}{2}}\hat{C}_{n}\beta^*\|_{L^2} \\
        & \times \|(\hat{\Lambda}_{n}+ \lambda I)^{-\frac{1}{2}}(r_{\lambda}(\hat{\Lambda}_{n})+\lambda g_{\lambda}(\hat{\Lambda}_{n}))(\hat{\Lambda}_{n}+ \lambda I)^{\frac{3}{2}}\|_{L^2 \to L^2}.
    \end{split}
\end{equation}
Using equation $(\ref{testimate})$, Lemma \ref{Covariance estimation} and the regularization properties $(\ref{reg1}),(\ref{reg2}),(\ref{reg3})$ and $(\ref{qualification})$ in \eqref{power3/2} yields
\begin{equation}
\label{3/2powerbound}
    \begin{split}
    & \| T^{\frac{1}{2}}(g_{\lambda}(\hat{\Lambda}_{n})-(\hat{\Lambda}_{n}+ \lambda I)^{-1})T^{\frac{1}{2}}\hat{C}_{n}\beta^*\|_{L^2}\\
       \lesssim_{p} & \lambda \lambda^{-\frac{c}{2(t+c)}} \|(\hat{\Lambda}_{n}+ \lambda I)^{-\frac{3}{2}}T^{\frac{1}{2}}\hat{C}_{n}\beta^*\|_{L^2} \\
        \leq & \lambda \lambda^{-\frac{c}{2(t+c)}} \|(\hat{\Lambda}_{n}+ \lambda I)^{-\frac{1}{2}}(\Lambda+ \lambda I)^{\frac{1}{2}}\|_{L^2 \to L^2}(\|(\Lambda+ \lambda I)^{-\frac{3}{2}}T^{\frac{1}{2}}C\beta^*\|_{L^2}\\
         &  + \|(\Lambda+ \lambda I)^{-\frac{1}{2}}((\hat{\Lambda}_{n}+ \lambda I)^{-1}T^{\frac{1}{2}}\hat{C}_{n}\beta^*-(\Lambda+ \lambda I)^{-1}T^{\frac{1}{2}}C\beta^*)\|_{L^2}).
    \end{split}
\end{equation}
Similar to $(\ref{testimate})$, we can see that
\begin{equation*}
    \begin{split}
        \|(\Lambda+ \lambda I)^{-\frac{3}{2}}T^{\frac{1}{2}}C\beta^*\|_{L^2}  \leq & \|(\Lambda+ \lambda I)^{-\frac{3}{2}}T^{\frac{1}{2}}CT^{\alpha}\|_{L^2 \to L^2} \|h\|_{L^2}\\
        \lesssim &  \left(\sup_{i} \left|\frac{i^{-\frac{2}{3}(t(\alpha +\frac{1}{2})+c)}}{i^{-(t+c)}+\lambda}\right|\right)^{\frac{3}{2}} 
        \lesssim  \lambda^{\frac{2\alpha t - 2t - c}{2(t+c)}},
    \end{split}
\end{equation*}
thereby yielding
\begin{equation}
\label{2abound}
    \begin{split}
        & \| T^{\frac{1}{2}}(g_{\lambda}(\hat{\Lambda}_{n})-(\hat{\Lambda}_{n}+ \lambda I)^{-1})T^{\frac{1}{2}}\hat{C}_{n}\beta^*\|_{L^2}\\
        \lesssim_{p} & \sqrt{\lambda} \lambda^{-\frac{c}{2(t+c)}} \|(\hat{\Lambda}_{n}+ \lambda I)^{-1}T^{\frac{1}{2}}\hat{C}_{n}\beta^*-(\Lambda+ \lambda I)^{-1}T^{\frac{1}{2}}C\beta^*\|_{L^2} + \lambda^{\frac{\alpha t}{t+c}}.
    \end{split}
\end{equation}
Using similar ideas from $(\ref{2b})$ to bound the norm term in the r.h.s. of \eqref{2abound}, we get
\begin{equation*}
\begin{split}
         & \| T^{\frac{1}{2}}g_{\lambda}(\hat{\Lambda}_{n})T^{\frac{1}{2}}\hat{C}_{n}\beta^*-T^{\frac{1}{2}}(\hat{\Lambda}_{n}+ \lambda I)^{-1}T^{\frac{1}{2}}\hat{C}_{n}\beta^*\|_{L^2} 
         \lesssim_{p}  \textcolor{black}{\frac{\lambda^{-\frac{1-\alpha t}{(t+c)}}}{\sqrt{n}} } +\lambda^{\frac{\alpha t}{t+c}}.
    \end{split}
\end{equation*}
\textit{Bounding Term-2c:} Consider
\begin{equation*}
    \begin{split}
        & \|T^{\frac{1}{2}}(\Lambda + \lambda I)^{-1}T^{\frac{1}{2}}C\beta^* -\beta^* \|_{L^2}  = \|T^{\frac{1}{2}}(\Lambda + \lambda I)^{-1}T^{\frac{1}{2}}CT^{\alpha}h -T^{\alpha}h \|_{L^2} \\
        = & \left[\sum_{i}(\frac{\mu_{i}^{1+\alpha}\xi_{i}}{\mu_{i}\xi_{i}+\lambda}-\mu_{i}^{\alpha})^2\langle\phi_{i},h\rangle^2\right]^{\frac{1}{2}} =  \left[\sum_{i}(\frac{\lambda \mu_{i}^{\alpha}}{\mu_{i}\xi_{i}+\lambda})^2\langle\phi_{i},h\rangle^2\right]^{\frac{1}{2}} \\
        \leq & \lambda \sup_{i} \left[\frac{\mu_{i}^{\alpha}}{\mu_{i}\xi_{i}+\lambda}\right] \|h\|_{L^2}
        \lesssim  \lambda \sup_{i}\left[\frac{i^{-\alpha t}}{i^{-(t+c)}+\lambda}\right] \|h\|_{L^2}
        \lesssim  \lambda^{\frac{\alpha t}{t+c}}.
    \end{split}
\end{equation*}

\textbf{Case-2 }: Suppose $\alpha \geq \frac{1}{2}$. Let $\nu$ be the qualification of the regularization family and $\nu'$ be the greatest integer less than or equal to $\nu$. Consider
\begin{equation*}
    \begin{split}
        & \| T^{\frac{1}{2}}g_{\lambda}(\hat{\Lambda}_{n})T^{\frac{1}{2}}\hat{C}_{n}\beta^*-\beta^*\|_{L^2} 
        =  \| T^{\frac{1}{2}}g_{\lambda}(\hat{\Lambda}_{n})T^{\frac{1}{2}}\hat{C}_{n}T^{\frac{1}{2}}T^{\alpha-\frac{1}{2}}h-T^{\frac{1}{2}}T^{\alpha-\frac{1}{2}}h\|_{L^2} \\
        = & \|T^{\frac{1}{2}}r_{\lambda}(\hat{\Lambda}_{n})T^{\alpha-\frac{1}{2}}h\|_{L^2}\\
        = & \|T^{\frac{1}{2}}r_{\lambda}(\hat{\Lambda}_{n})(\hat{\Lambda}_{n}+ \lambda I)^{\nu}(\hat{\Lambda}_{n}+ \lambda I)^{-(\nu-\nu')}(\Lambda + \lambda I)^{\nu-\nu'} \\
        &  \qquad \qquad \qquad \times  (\Lambda + \lambda I)^{-(\nu-\nu')}(\hat{\Lambda}_{n}+ \lambda I)^{-\nu'}T^{\alpha-\frac{1}{2}}h\|_{L^2}\\
        \leq & \|T^{\frac{1}{2}}r_{\lambda}(\hat{\Lambda}_{n})(\hat{\Lambda}_{n}+ \lambda I)^{\nu}(\hat{\Lambda}_{n}+ \lambda I)^{-(\nu-\nu')}(\Lambda + \lambda I)^{\nu-\nu'}  \\
        &  \times ( (\Lambda + \lambda I)^{-(\nu-\nu')}(\hat{\Lambda}_{n}+ \lambda I)^{-\nu'}T^{\alpha-\frac{1}{2}}h - (\Lambda+ \lambda I)^{-\nu}T^{\alpha-\frac{1}{2}}h + (\Lambda+ \lambda I)^{-\nu}T^{\alpha-\frac{1}{2}}h )\|_{L^2}\\
        \lesssim_{p} &  \underbrace{\lambda^{\nu} \|(\Lambda + \lambda I)^{-(\nu-\nu')}(\hat{\Lambda}_{n}+ \lambda I)^{-\nu'}T^{\alpha-\frac{1}{2}}h - (\Lambda+ \lambda I)^{-\nu}T^{\alpha-\frac{1}{2}}h\|_{L^2}}_{\textit{Term-2d}}\\
        & + \underbrace{\|T^{\frac{1}{2}}r_{\lambda}(\hat{\Lambda}_{n})(\hat{\Lambda}_{n}+ \lambda I)^{\nu}(\hat{\Lambda}_{n}+ \lambda I)^{-(\nu-\nu')}(\Lambda + \lambda I)^{\nu-\nu'}(\Lambda+ \lambda I)^{-\nu}T^{\alpha-\frac{1}{2}}h\|_{L^2}}_{\textit{Term-2e}},
    \end{split}
    \end{equation*}
    where the last inequality is obtained by using 
Lemma \ref{Covariance estimation} and the qualification property $(\ref{qualification})$ of the regularization family.\vspace{2mm}\\
\noindent
\textit{Bounding Term-2d:}
Note that
\begin{equation}
\label{t4}
    \begin{split}
        & \lambda^{\nu} \|(\Lambda + \lambda I)^{-(\nu-\nu')}(\hat{\Lambda}_{n}+ \lambda I)^{-\nu'}T^{\alpha-\frac{1}{2}}h - (\Lambda+ \lambda I)^{-\nu}T^{\alpha-\frac{1}{2}}h \|_{L^2}\\
        \leq & \lambda^{\nu'} \|(\hat{\Lambda}_{n}+ \lambda I)^{-\nu'}T^{\alpha-\frac{1}{2}}h - (\Lambda+ \lambda I)^{-\nu'}T^{\alpha-\frac{1}{2}}h\|_{L^2}.
    \end{split}
\end{equation}
Using Lemma \ref{reducing}, we get
\begin{equation*}
    \begin{split}
        & \|(\hat{\Lambda}_{n}+ \lambda I)^{-\nu'}T^{\alpha-\frac{1}{2}}h - (\Lambda+ \lambda I)^{-\nu'}T^{\alpha-\frac{1}{2}}h\|_{L^2} \\
        \leq & \|(\hat{\Lambda}_{n}+\lambda I)^{-(\nu'  -1  )}[(\hat{\Lambda}_{n}+\lambda I)^{-1}T^{\alpha-\frac{1}{2}}h-(\Lambda+\lambda I)^{-1}T^{\alpha-\frac{1}{2}}h]\|_{L^2}\\
        & + \|\sum_{i=1}^{\nu'-1}(\hat{\Lambda}_{n}+\lambda I )^{-i}(\Lambda-\hat{\Lambda}_{n})(\Lambda+\lambda I )^{-( \nu'  +1-i)}T^{\alpha-\frac{1}{2}}h\|_{L^2}\\
        \lesssim & \frac{1}{\lambda^{\nu'-1}} \|(\hat{\Lambda}_{n}+\lambda I)^{-1}T^{\alpha-\frac{1}{2}}-(\Lambda+\lambda I)^{-1}T^{\alpha-\frac{1}{2}}\|_{L^2 \to L^2}\\
        & + \sum_{i=1}^{\nu'-1}\frac{1}{\lambda^{i-\frac{1}{2}}} \frac{1}{\lambda^{\nu'+1-i-\frac{1}{2}}}\|(\hat{\Lambda}_{n}+\lambda I)^{-\frac{1}{2}}(\Lambda - \hat{\Lambda}_{n})(\Lambda+\lambda I)^{-\frac{1}{2}}T^{\alpha-\frac{1}{2}}\|_{L^2 \to L^2}
        \end{split}
        \end{equation*}
        \begin{equation*}
        \begin{split}
        \lesssim & \frac{1}{\lambda^{\nu'}} \|(\hat{\Lambda}_{n}+\lambda I)^{-\frac{1}{2}}(\Lambda - \hat{\Lambda}_{n})(\Lambda+\lambda I)^{-\frac{1}{2}}\|_{L^2 \to L^2}.
            \end{split}
\end{equation*}
By using $p=\frac{1}{2}$ and $\alpha = \frac{1}{2}$ in Lemma \ref{cmdifference}, we get
\begin{equation*}
\|(\hat{\Lambda}_{n}+ \lambda I)^{-\nu'}T^{\alpha-\frac{1}{2}}h - (\Lambda+ \lambda I)^{-\nu'}T^{\alpha-\frac{1}{2}}h\|_{L^2} 
        \lesssim_{p}  \frac{1}{\lambda^{\nu'}} \textcolor{black}{\frac{\lambda^{-\frac{1}{(t+c)}}}{\sqrt{n}}}.
\end{equation*}
\noindent
Using this bound in $(\ref{t4})$ yields
\begin{equation}
\label{empiricaltocovariance}
       \lambda^{\nu} \|(\Lambda + \lambda I)^{-(\nu-\nu')}(\hat{\Lambda}_{n}+ \lambda I)^{-\nu'}T^{\alpha-\frac{1}{2}}h - (\Lambda+ \lambda I)^{-\nu}T^{\alpha-\frac{1}{2}}h \|_{L^2}
       \lesssim_{p} \textcolor{black}{\frac{\lambda^{-\frac{1}{(t+c)}}}{\sqrt{n}}.}
\end{equation}\\
\noindent
\textit{Bounding Term-2e:}
\begin{equation*}
\begin{split}
    & \|T^{\frac{1}{2}}r_{\lambda}(\hat{\Lambda}_{n})(\hat{\Lambda}_{n}+ \lambda I)^{\nu}(\hat{\Lambda}_{n}+ \lambda I)^{-(\nu-\nu')}(\Lambda + \lambda I)^{\nu-\nu'}(\Lambda+ \lambda I)^{-\nu}T^{\alpha-\frac{1}{2}}h\|_{L^2}\\
    \leq & \|T^{\frac{1}{2}}(\Lambda+ \lambda I)^{-\frac{1}{2}}\|_{L^2 \to L^2} \|(\Lambda+ \lambda I)^{\frac{1}{2}} (\hat{\Lambda}_{n}+ \lambda I)^{-\frac{1}{2}}\|_{L^2 \to L^2}\\
    & \times \|(\hat{\Lambda}_{n}+ \lambda I)^{\frac{1}{2}}r_{\lambda}(\hat{\Lambda}_{n})(\hat{\Lambda}_{n}+ \lambda I)^{\nu-\frac{1}{2}}\|_{L^2 \to L^2}
    \|(\Lambda+ \lambda I)^{-(\nu-\frac{1}{2})}T^{\alpha-\frac{1}{2}}h\|_{L^2}\\
    & \times \|(\hat{\Lambda}_{n}+ \lambda I)^{\frac{1}{2}}(\hat{\Lambda}_{n}+ \lambda I)^{-(\nu-\nu')}(\Lambda + \lambda I)^{\nu-\nu'}(\Lambda + \lambda I)^{-\frac{1}{2}}\|_{L^2 \to L^2}.
\end{split}
\end{equation*}
Using $(\ref{testimate})$, Lemma \ref{Covariance estimation}, Lemma \ref{Empirical bound} and the qualification property $(\ref{qualification})$ of regularization family, we get 

\begin{equation*}
    \begin{split}
        & \|T^{\frac{1}{2}}r_{\lambda}(\hat{\Lambda}_{n})(\hat{\Lambda}_{n}+ \lambda I)^{\nu}(\hat{\Lambda}_{n}+ \lambda I)^{-(\nu-\nu')}(\Lambda + \lambda I)^{\nu-\nu'}(\Lambda+ \lambda I)^{-\nu}T^{\alpha-\frac{1}{2}}h\|_{L^2} \\ 
        &  \qquad \qquad \lesssim_{p} \lambda^{-\frac{c}{2(t+c)}} \lambda^{\nu}  \|(\Lambda+ \lambda I)^{-(\nu-\frac{1}{2})}T^{\alpha-\frac{1}{2}}h\|_{L^2},
    \end{split}
\end{equation*}
where
\begin{equation*}
    \begin{split}
        &\|(\Lambda+ \lambda I)^{-(\nu-\frac{1}{2})}T^{\alpha-\frac{1}{2}}h\|_{L^2}  \leq \|(\Lambda+ \lambda I)^{-(\nu-\frac{1}{2})}T^{\alpha-\frac{1}{2}}\|_{L^2 \to L^2}\|h\|_{L^2}\\
        & \leq \sup_{i}\left|\frac{\mu_{i}^{\alpha-\frac{1}{2}}}{(\mu_{i} \xi_{i}+\lambda)^{(\nu-\frac{1}{2})}}\right| \|h\|_{L^2}
         \lesssim \sup_{i} \left|\frac{i^{-t(\alpha-\frac{1}{2})}}{(i^{-t-c}+\lambda)^{(\nu-\frac{1}{2})}}\right| 
         \lesssim  \lambda^{\frac{r t-\frac{t}{2}-(t+c)(\nu-\frac{1}{2})} {t+c}},
    \end{split}
\end{equation*}
for  $r = \min\{\alpha , \nu + \frac{c}{t} (\nu -\frac{1}{2}) \}$,
where the last inequality follows from Lemma \ref{supbound} with $(a,m,p,q,l) = (\frac{1}{2}, t, \alpha,t+c, \nu-\frac{1}{2})$,
resulting in
\begin{equation*}
    \begin{split}
        & \|T^{\frac{1}{2}}r_{\lambda}(\hat{\Lambda}_{n})(\hat{\Lambda}_{n}+ \lambda I)^{\nu}(\hat{\Lambda}_{n}+ \lambda I)^{-(\nu-\nu')}(\Lambda + \lambda I)^{\nu-\nu'}(\Lambda+ \lambda I)^{-\nu}T^{\alpha-\frac{1}{2}}h\|_{L^2}\\
        & \qquad \qquad \qquad \qquad \qquad \qquad \qquad \qquad \lesssim_{p} \lambda^{\frac{r t}{t+c}}.
    \end{split}
\end{equation*}
Combining all the bounds we get
\begin{equation*}
\label{fb}
    \begin{split}
        \|\beta^*-\hat{\beta}\|_{L^2} \lesssim_{p}
        \begin{cases}
            \lambda^{-\frac{c}{2(t+c)}} \sqrt{\frac{ \sigma^2 \mathcal{N}(\lambda)}{n }} + \lambda^{\frac{\alpha t}{t+c}} + \textcolor{black}{\frac{\lambda^{-\frac{1-\alpha t}{(t+c)}}}{\sqrt{n}} }& \mbox{ if } \quad 0 < \alpha \leq \frac{1}{2} \\
            \lambda^{-\frac{c}{2(t+c)}} \sqrt{\frac{ \sigma^2 \mathcal{N}(\lambda)}{n }} + \lambda^{\frac{\alpha t}{t+c}}+ \textcolor{black}{\frac{\lambda^{-\frac{1}{ (t+c)}}}{\sqrt{n}} }&  \mbox{ if } \quad \frac{1}{2} \leq \alpha \leq \nu + \frac{c}{t} (\nu -\frac{1}{2})\\
            \lambda^{-\frac{c}{2(t+c)}} \sqrt{\frac{ \sigma^2 \mathcal{N}(\lambda)}{n }} + \lambda^{\frac{(\nu + \frac{c}{t} (\nu -\frac{1}{2})) t}{t+c}}+ \textcolor{black}{\frac{\lambda^{-\frac{1}{(t+c)}}}{\sqrt{n}} }&  \mbox{ if } \quad \alpha \geq \nu + \frac{c}{t} (\nu -\frac{1}{2})
        \end{cases}, 
    \end{split}
\end{equation*}
where the effective dimension $\mathcal{N}(\lambda)$ can be bounded as
\begin{equation*}
    \begin{split}
        \mathcal{N}(\lambda ) & = \text{trace}(\Lambda (\Lambda+\lambda I)^{-1})
         = \sum_{i}\frac{\mu_{i}\xi_{i}}{\mu_{i}\xi_{i}+\lambda} \lesssim \sum_{i} \frac{i^{-(t+c)}}{i^{-(t+c)}+\lambda} \lesssim \lambda^{-\frac{1}{t+c}}.
    \end{split}
\end{equation*}
Therefore,
\begin{align*}
    \|\hat{\beta}-\beta^*\|_{L^2} &\lesssim_{p} 
    \frac{\lambda^{-\frac{(c+1)}{2(t+c)}}}{\sqrt{n}}+ \lambda^{\frac{r t}{t+c}}  
    \lesssim n^{-\frac{r t }{1+c+2 r t}} 
\end{align*}
for $\lambda = n^{-\frac{t+c}{1+c+2 r t}}$ and the result follows.

\subsection{Proof of part \ref{b} in Theorem \ref{comm.estimation}}\label{pcomm-est2}
    We start by considering the error term in the RKHS norm and applying Lemma \ref{J T}. 
    \begin{equation*}
    \begin{split}
        \|\hat{\beta}-\beta^*\|_{\mathcal{H}}  = & \|J \hat{\beta}-\beta^*\|_{\mathcal{H}}= \|Jg_{\lambda}(J^*\hat{C}_{n}J)J^*\hat{R}-\beta^*\|_{\mathcal{H}}\\
         = & \|T^{\frac{1}{2}}g_{\lambda}(\hat{\Lambda}_{n})T^{\frac{1}{2}}\hat{R}-\beta^*\|_{\mathcal{H}}
         =  \|T^{\frac{1}{2}}g_{\lambda}(\hat{\Lambda}_{n})T^{\frac{1}{2}}\hat{R}-T^{\alpha}h\|_{\mathcal{H}}\\
         \leq & \|T^{\frac{1}{2}}\|_{L^2 \to \mathcal{H}} \|g_{\lambda}(\hat{\Lambda}_{n})T^{\frac{1}{2}}\hat{R}-T^{\alpha-\frac{1}{2}}h\|_{L^2} \quad (\|T^{\frac{1}{2}}\|_{L^2 \to \mathcal{H}} = 1)\\
         \leq & \|g_{\lambda}(\hat{\Lambda}_{n})(T^{\frac{1}{2}}\hat{R} - T^{\frac{1}{2}}\hat{C}_{n}\beta^*)\|_{L^2} + \|g_{\lambda}(\hat{\Lambda}_{n})T^{\frac{1}{2}}\hat{C}_{n}\beta^*-T^{\alpha-\frac{1}{2}}h\|_{L^2}\\
         \leq & ( \|g_{\lambda}(\hat{\Lambda}_{n})(\hat{\Lambda}_{n}+\lambda I)^{\frac{1}{2}}\|_{L^2 \to L^2} \|(\hat{\Lambda}_{n}+\lambda I)^{-\frac{1}{2}}(\Lambda+\lambda I)^{\frac{1}{2}}\|_{L^2 \to L^2}\\
        & \times \|(\Lambda+\lambda I)^{-\frac{1}{2}}(T^{\frac{1}{2}}\hat{R} - T^{\frac{1}{2}}\hat{C}_{n}\beta^*)\|_{L^2}) + \|g_{\lambda}(\hat{\Lambda}_{n})T^{\frac{1}{2}}\hat{C}_{n}\beta^*-T^{\alpha-\frac{1}{2}}h\|_{L^2}.
         \end{split}
         \end{equation*}
    From Lemma~\ref{Empirical bound}, Lemma \ref{Covariance estimation} and regularization property $(\ref{reg2})$, we get
         \begin{equation*}
         \|\hat{\beta}-\beta^*\|_{\mathcal{H}}
          \lesssim_{p} \frac{1}{\sqrt{\lambda}} \frac{\lambda^{-\frac{1}{2(t+c)}}}{\sqrt{n}} + \|g_{\lambda}(\hat{\Lambda}_{n})T^{\frac{1}{2}}\hat{C}_{n}\beta^*-T^{\alpha-\frac{1}{2}}h\|_{L^2}.
    \end{equation*}
    Now we bound the second term in the above inequality as
    \begin{equation*}
        \begin{split}
& \|g_{\lambda}(\hat{\Lambda}_{n})T^{\frac{1}{2}}\hat{C}_{n}\beta^*-T^{\alpha-\frac{1}{2}}h\|_{L^2}  = \|r_{\lambda}(\hat{\Lambda}_{n})T^{\alpha-\frac{1}{2}}h\|_{L^2} \\
= &\|r_{\lambda}(\hat{\Lambda}_{n})(\hat{\Lambda}_{n}+\lambda I)^{\nu'}(\hat{\Lambda}_{n}+\lambda I)^{-\nu'}T^{\alpha-\frac{1}{2}}h\|_{L^2}\\
= & \|r_{\lambda}(\hat{\Lambda}_{n})(\hat{\Lambda}_{n}+ \lambda I)^{\nu}(\hat{\Lambda}_{n}+ \lambda I)^{-(\nu-\nu')}(\Lambda + \lambda I)^{\nu-\nu'}  \\
        &  \qquad \qquad \qquad \times  (\Lambda + \lambda I)^{-(\nu-\nu')}(\hat{\Lambda}_{n}+ \lambda I)^{-\nu'}T^{\alpha-\frac{1}{2}}h\|_{L^2}\\
        = & \|r_{\lambda}(\hat{\Lambda}_{n})(\hat{\Lambda}_{n}+ \lambda I)^{\nu}(\hat{\Lambda}_{n}+ \lambda I)^{-(\nu-\nu')}(\Lambda + \lambda I)^{\nu-\nu'}  \\
        &  \times  ( (\Lambda + \lambda I)^{-(\nu-\nu')}(\hat{\Lambda}_{n}+ \lambda I)^{-\nu'}T^{\alpha-\frac{1}{2}}h - (\Lambda+ \lambda I)^{-\nu}T^{\alpha-\frac{1}{2}}h + (\Lambda+ \lambda I)^{-\nu}T^{\alpha-\frac{1}{2}}h)\|_{L^2}\\
        \lesssim_{p} & \lambda^{\nu}\underbrace{\|(\Lambda + \lambda I)^{-(\nu-\nu')}(\hat{\Lambda}_{n}+ \lambda I)^{-\nu'}T^{\alpha-\frac{1}{2}}h - (\Lambda+ \lambda I)^{-\nu}T^{\alpha-\frac{1}{2}}h\|_{L^2}}_{\textit{Term-1}} \\
        & + \lambda^{\nu} \underbrace{\|(\Lambda+ \lambda I)^{-\nu}T^{\alpha-\frac{1}{2}}h\|_{L^2}}_{\textit{Term-2}}.
\end{split}
\end{equation*}
The last inequality follows from Lemma \ref{Covariance estimation} and the qualification property $(\ref{qualification})$ of the regularization family.\\

\noindent
\textit{Bounding Term-1:}
\begin{equation*}
    \begin{split}
        & \|(\Lambda + \lambda I)^{-(\nu-\nu')}(\hat{\Lambda}_{n}+ \lambda I)^{-\nu'}T^{\alpha-\frac{1}{2}}h - (\Lambda+ \lambda I)^{-\nu}T^{\alpha-\frac{1}{2}}h\|_{L^2} \\
        \leq & \frac{1}{\lambda^{(\nu-\nu')}} \|(\hat{\Lambda}_{n}+ \lambda I)^{-\nu'}T^{\alpha-\frac{1}{2}}h - (\Lambda+ \lambda I)^{-\nu'}T^{\alpha-\frac{1}{2}}h\|_{L^2}.
    \end{split}
\end{equation*}
From $(\ref{empiricaltocovariance})$, we get
\begin{equation*}
    \begin{split}
        & \|(\Lambda + \lambda I)^{-(\nu-\nu')}(\hat{\Lambda}_{n}+ \lambda I)^{-\nu'}T^{\alpha-\frac{1}{2}}h - (\Lambda+ \lambda I)^{-\nu}T^{\alpha-\frac{1}{2}}h\|_{L^2} \\ 
        \lesssim_{p} & \frac{1}{\lambda^{(\nu-\nu')}} \frac{1}{\lambda^{\nu'}} \textcolor{black}{\frac{\lambda^{-\frac{1}{(t+c)}}}{\sqrt{n}}} = {\lambda^{-\nu}} \textcolor{black}{\frac{\lambda^{-\frac{1}{(t+c)}}}{\sqrt{n}}.}
    \end{split}
\end{equation*}
\textit{Bounding Term-2:} We apply Lemma \ref{supbound} with $(a,m,p,q,l) = (\frac{1}{2},t,\alpha,t+c,\nu)$ to get
\begin{equation*}
    \begin{split}
        \|(\Lambda+ \lambda I)^{-\nu}T^{\alpha-\frac{1}{2}}h\|_{L^2} & \lesssim \sup_{i} \left|\frac{\mu_{i}^{\alpha-\frac{1}{2}}}{(\mu_{i}\xi_{i}+\lambda)^{\nu}}\right| 
         \lesssim  \sup_{i} \left|\frac{i^{-t(\alpha -\frac{1}{2})}}{(i^{-(t+c)}+\lambda)^{\nu}}\right|
         \lesssim  \lambda^{\frac{t(r-\frac{1}{2})-(t+c)\nu}{t+c}},
    \end{split}
\end{equation*}
where $r = \min \{\alpha, \nu + \frac{c}{t}\nu +\frac{1}{2}\}$.\\

\noindent
Combining the bounds on Term-1 and Term-2, we have
\begin{equation*}
    \begin{split}
        \|g_{\lambda}(\hat{\Lambda}_{n})T^{\frac{1}{2}}\hat{C}_{n}\beta^*-T^{\alpha-\frac{1}{2}}h\|_{L^2}  = \|r_{\lambda}(\hat{\Lambda}_{n})T^{\alpha-\frac{1}{2}}h\|_{L^2} \lesssim_{p} \textcolor{black}{\frac{\lambda^{-\frac{1}{(t+c)}}}{\sqrt{n}} }+ \lambda^{\frac{t(r -\frac{1}{2})}{t+c}}.
    \end{split}
\end{equation*}
resulting in
\begin{equation*}
    \begin{split}
        \|\hat{\beta}-\beta^*\|_{\mathcal{H}} \lesssim_{p} \lambda^{\frac{t(r -\frac{1}{2})}{t+c}} + \frac{1}{\sqrt{\lambda}} \frac{\lambda^{-\frac{1}{2(t+c)}}}{\sqrt{n}}.
    \end{split}
\end{equation*}
The result follows by choosing 
$\lambda = n^{-\frac{t+c}{1+c+ 2 r t}}$.

\subsection{Proof of Theorem \ref{NCestimation}}\label{pcomm-nc}
    Consider  
	\begin{equation*}
        \begin{split}
	       & \| \hat{\beta} - \beta^* \|_{\mathcal{H}}  =  \| J \hat{\beta} -\beta^* \|_{\mathcal{H}} 
            =  \|Jg_{\lambda}(J^*\hat{C}_{n}J)J^*\hat{R}-\beta^*\|_{\mathcal{H}}\\
            = & \| T^{\frac{1}{2}}g_{\lambda}(\hat{\Lambda}_{n})T^{\frac{1}{2}}\hat{R}  -\beta^* \|_{\mathcal{H}} \\
          \leq & \| T^{\frac{1}{2}}g_{\lambda}(\hat{\Lambda}_{n})(T^{\frac{1}{2}}\hat{R} - T^{\frac{1}{2}}\hat{C}_{n}\beta^*)\|_{\mathcal{H}}
         + \| T^{\frac{1}{2}}g_{\lambda}(\hat{\Lambda}_{n})T^{\frac{1}{2}}\hat{C}_{n}\beta^*-\beta^*\|_{\mathcal{H}}.
        \end{split}
	\end{equation*}
The first term on the r.h.s. can be bounded as
\begin{equation*}
    \begin{split}
        & \| T^{\frac{1}{2}}g_{\lambda}(\hat{\Lambda}_{n})(T^{\frac{1}{2}}\hat{R} - T^{\frac{1}{2}}\hat{C}_{n}\beta^*)\|_{\mathcal{H}}\\
        \leq & \|T^{\frac{1}{2}}\|_{L^2 \to \mathcal{H}}\|g_{\lambda}(\hat{\Lambda}_{n})(\hat{\Lambda}_{n}+\lambda I)^{\frac{1}{2}}\|_{L^2 \to L^2} \|(\hat{\Lambda}_{n}+\lambda I)^{-\frac{1}{2}}(\Lambda +\lambda I)^{\frac{1}{2}}\| _{L^2 \to L^2}\\
        & \quad \times \|(\Lambda +\lambda I)^{-\frac{1}{2}}(T^{\frac{1}{2}}\hat{R} - T^{\frac{1}{2}}\hat{C}_{n}\beta^*)\|_{L^2}.
    \end{split}
\end{equation*}
Using regularization properties $(\ref{reg2})$, Lemma \ref{Covariance estimation} and Lemma \ref{Empirical bound}, we get
\begin{equation*}
         \| T^{\frac{1}{2}}g_{\lambda}(\hat{\Lambda}_{n})(T^{\frac{1}{2}}\hat{R} - T^{\frac{1}{2}}\hat{C}_{n}\beta^*)\|_{\mathcal{H}}
        \lesssim_{p}  \frac{1}{\sqrt{\lambda}} \sqrt{\frac{ \sigma^2 \mathcal{N}(\lambda)}{n}}.
\end{equation*}
Now we consider
\begin{equation*}
    \begin{split}
        & \| T^{\frac{1}{2}}g_{\lambda}(\hat{\Lambda}_{n})T^{\frac{1}{2}}\hat{C}_{n}\beta^*-\beta^*\|_{\mathcal{H}} = \| T^{\frac{1}{2}}g_{\lambda}(\hat{\Lambda}_{n})T^{\frac{1}{2}}\hat{C}_{n}T^{\frac{1}{2}} \Lambda^s h  -T^{\frac{1}{2}} \Lambda^s h\|_{\mathcal{H}} \\
        = & \|T^{\frac{1}{2}} (g_{\lambda}(\hat{\Lambda}_{n})\hat{\Lambda}_{n}-I)\Lambda^s h\|_{\mathcal{H}} = \|T^{\frac{1}{2}}r_{\lambda}(\hat{\Lambda}_{n})\Lambda^s h\|_{\mathcal{H}} \\
        = & \|T^{\frac{1}{2}}r_{\lambda}(\hat{\Lambda}_{n})(\hat{\Lambda}_{n}+ \lambda I)^{\nu}(\hat{\Lambda}_{n}+ \lambda I)^{-(\nu-\nu')}(\hat{\Lambda}_{n}+ \lambda I)^{-\nu'}\Lambda^s h\|_{\mathcal{H}}\\
        = & \|T^{\frac{1}{2}}r_{\lambda}(\hat{\Lambda}_{n})(\hat{\Lambda}_{n}+ \lambda I)^{\nu}(\hat{\Lambda}_{n}+ \lambda I)^{-(\nu-\nu')}(\Lambda+ \lambda I)^{(\nu-\nu')}\\
        & \times [(\Lambda+ \lambda I)^{-(\nu-\nu')}(\hat{\Lambda}_{n}+ \lambda I)^{-\nu'}\Lambda^s h- (\Lambda+ \lambda I)^{-\nu}\Lambda^s h+ (\Lambda+ \lambda I)^{-\nu}\Lambda^s h ]\|_{\mathcal{H}}\\
        \lesssim_{p} & \underbrace{\lambda^{\nu} \|(\Lambda+ \lambda I)^{-(\nu-\nu')}(\hat{\Lambda}_{n}+ \lambda I)^{-\nu'}\Lambda^s h - (\Lambda+ \lambda I)^{-\nu}\Lambda^s h\|_{L^2}}_{Term-3} \\
        & \qquad \qquad + \underbrace{\lambda^{\nu}\| (\Lambda+ \lambda I)^{-\nu}\Lambda^s h\|_{L^2}}_{Term-4},
    \end{split}
 \end{equation*}
where we used Lemma \ref{Covariance estimation} and the qualification properties $(\ref{qualification})$ of the regularization family in the last equality.\\

\noindent
\textit{Bounding Term-3:}
\begin{equation*}
    \begin{split}
    & \lambda^{\nu} \|(\Lambda+ \lambda I)^{-(\nu-\nu')}(\hat{\Lambda}_{n}+ \lambda I)^{-\nu'}\Lambda^s h - (\Lambda+ \lambda I)^{-\nu}\Lambda^s h\|_{L^2}\\
        \leq & \lambda^{\nu} \|(\Lambda+ \lambda I)^{-(\nu-\nu')}((\hat{\Lambda}_{n}+ \lambda I)^{-\nu'}\Lambda^s h - (\Lambda+ \lambda I)^{-\nu'}\Lambda^s h)\|_{L^2} \\
        \leq & \lambda^{\nu'} \|(\hat{\Lambda}_{n}+ \lambda I)^{-\nu'}\Lambda^s h - (\Lambda+ \lambda I)^{-\nu'}\Lambda^s h\|_{L^2}.\\
    \end{split}
\end{equation*}
Using Lemma \ref{reducing}, we get
\begin{equation*}
    \begin{split}
        & \|(\hat{\Lambda}_{n}+ \lambda I)^{-\nu'}\Lambda^s h - (\Lambda+ \lambda I)^{-\nu'}\Lambda^s h\|_{L^2}\\
        \leq & \lambda^{-(\nu'-1)} \|(\hat{\Lambda}_{n}+ \lambda I)^{-1}\Lambda^s h - (\Lambda+ \lambda I)^{-1}\Lambda^s h\|_{L^2} \\
        & + \left\|\sum_{i=1}^{\nu'-1}(\hat{\Lambda}_{n}+ \lambda I)^{-i}(\Lambda -\hat{\Lambda}_{n})(\Lambda+ \lambda I)^{-(\nu' + 1 -i )}\Lambda^s h\right\|_{L^2} \\
        \lesssim & \lambda^{-(\nu'-1)} \|(\hat{\Lambda}_{n}+ \lambda I)^{-1} (\Lambda - \hat{\Lambda}_{n})(\Lambda+ \lambda I)^{-1}\Lambda^s \|_{L^2 \to L^2}\\
        & + \sum_{i=1}^{\nu'-1}\frac{1}{\lambda^{i-\frac{1}{2}}}\frac{1}{\lambda^{\nu'+1-i-\frac{1}{2}}}\|(\hat{\Lambda}_{n}+ \lambda I)^{-\frac{1}{2}} (\Lambda - \hat{\Lambda}_{n})(\Lambda+ \lambda I)^{-\frac{1}{2}}\|_{L^2 \to L^2}\\
        \lesssim & \lambda^{-\nu'} \|(\hat{\Lambda}_{n}+ \lambda I)^{-\frac{1}{2}} (\Lambda - \hat{\Lambda}_{n})(\Lambda+ \lambda I)^{-\frac{1}{2}}\|_{L^2 \to L^2}\lesssim_{p} \lambda^{-\nu'} \textcolor{black}{\frac{\lambda^{-\frac{1}{b}}}{\sqrt{n}}, }
    \end{split}
\end{equation*}
where the last inequality follows from Lemma \ref{ncmdifference}.\vspace{1mm}\\
\noindent
\textit{Bounding Term-4:} We apply Lemma \ref{supbound} with $(a,m,p,q,l) = (0,b,s,b,\nu)$ to get
\begin{equation*}
    \begin{split}
       &\lambda^{\nu} \|(\Lambda+\lambda I)^{-\nu}\Lambda^s h\|_{L^2} \leq  \lambda^{\nu} \|(\Lambda+\lambda I)^{-\nu}\Lambda^s\|_{L^2 \to L^2} \|h\|_{L^2} \\
        \lesssim & \lambda^{\nu} \sup_{i} \left|\frac{\tau_{i}^{s}}{(\tau_{i}+\lambda)^{\nu}}\right|
        \lesssim \lambda^{\nu}\sup_{i}\left|\frac{i^{-sb}}{(i^{-b}+\lambda)^{\nu}}\right|
        \leq  \lambda^{\nu} \lambda^{\frac{rb-b \nu}{b}} = \lambda^{r},
    \end{split}
\end{equation*}
where $r = \min \{s, \nu\}$.\\
\indent Combing all the bounds, we get
\begin{equation*}
    \begin{split}
        \|\beta^*-\hat{\beta}\|_{\mathcal{H}} \lesssim_{p} \frac{1}{\sqrt{\lambda}} \sqrt{\frac{ \sigma^2 \mathcal{N}(\lambda)}{n}} + \textcolor{black}{\frac{\lambda^{-\frac{1}{b}}}{\sqrt{n}} }+ \lambda^r,
    \end{split}
\end{equation*}
where
\begin{equation*}
    \begin{split}
        \mathcal{N}(\lambda) = & \text{trace}(\Lambda (\Lambda + \lambda I)^{-1}) 
        = \sum_{i} \left[\frac{\tau_{i}}{\tau_{i}+\lambda}\right] \lesssim \sum_{i} \left[\frac{i^{-b}}{i^{-b}+\lambda}\right] \lesssim \lambda^{-\frac{1}{b}}   
    \end{split}
\end{equation*}
resulting in
\begin{equation*}
    \begin{split}
        \|\hat{\beta} - \beta^* \|_{\mathcal{H}} \lesssim_{p} &  \frac{\lambda^{-(\frac{1}{2b}+\frac{1}{2})}}{\sqrt{n}} + \lambda^r . 
    \end{split}
\end{equation*}
The result follows for 
$\lambda = n^{-\frac{b}{1+b+2rb}}$.

\subsection{Proof of Theorem \ref{predictionbasic}}\label{pcomm-pred1}
    Consider
    \begin{equation*}
        \begin{split}
           \|C^{\frac{1}{2}}(\hat{\beta}-\beta^*)\|_{L^2} = &
           \|C^{\frac{1}{2}}(J \hat{\beta}-\beta^*)\|_{L^2} 
           =  \|C^{\frac{1}{2}}(Jg_{\lambda}(J^*\hat{C}_{n}J)J^*\hat{R}-\beta^*)\|_{L^2}.
        \end{split}
    \end{equation*}
Using Lemma \ref{J T}, we get
\begin{equation*}
    \begin{split}
       & \|C^{\frac{1}{2}}(\hat{\beta}-\beta^*)\|_{L^2} = \|C^{\frac{1}{2}}(T^{\frac{1}{2}}g_{\lambda}(\hat{\Lambda}_{n})T^{\frac{1}{2}}\hat{R}-\beta^*)\|_{L^2} \\
       \leq & \|C^{\frac{1}{2}}(T^{\frac{1}{2}}g_{\lambda}(\hat{\Lambda}_{n})T^{\frac{1}{2}}\hat{R}- T^{\frac{1}{2}}g_{\lambda}(\hat{\Lambda}_{n})T^{\frac{1}{2}}\hat{C}_{n}\beta^*)\|_{L^2}\nonumber\\
       &\qquad\qquad\qquad+\|C^{\frac{1}{2}}(T^{\frac{1}{2}}g_{\lambda}(\hat{\Lambda}_{n})T^{\frac{1}{2}}\hat{C}_{n}\beta^*-\beta^*)\|_{L^2} \\
       = & \|C^{\frac{1}{2}}T^{\frac{1}{2}}g_{\lambda}(\hat{\Lambda}_{n})(T^{\frac{1}{2}}\hat{R}-T^{\frac{1}{2}}\hat{C}_{n}\beta^*)\|_{L^2}+\|C^{\frac{1}{2}}(T^{\frac{1}{2}}g_{\lambda}(\hat{\Lambda}_{n})T^{\frac{1}{2}}\hat{C}_{n}\beta^*-\beta^*)\|_{L^2} \\
       = & \|\Lambda^{\frac{1}{2}}g_{\lambda}(\hat{\Lambda}_{n})(T^{\frac{1}{2}}\hat{R}-T^{\frac{1}{2}}\hat{C}_{n}\beta^*)\|_{L^2}+\|C^{\frac{1}{2}}(T^{\frac{1}{2}}g_{\lambda}(\hat{\Lambda}_{n})T^{\frac{1}{2}}\hat{C}_{n}\beta^*-\beta^*)\|_{L^2} \\
       \leq & \|\Lambda^{\frac{1}{2}} (\Lambda + \lambda I)^{-\frac{1}{2}}\|_{L^2 \to L^2} \|(\Lambda + \lambda I)^{\frac{1}{2}} (\hat{\Lambda}_{n} + \lambda I)^{-\frac{1}{2}}\|_{L^2 \to L^2} \\
       &\times \|(\hat{\Lambda}_{n} + \lambda I)^{\frac{1}{2}}g_{\lambda}(\hat{\Lambda}_{n})(\hat{\Lambda}_{n} + \lambda I)^{\frac{1}{2}}\|_{L^2 \to L^2}\\
       & \times \|(\hat{\Lambda}_{n} + \lambda I)^{-\frac{1}{2}}(\Lambda + \lambda I)^{\frac{1}{2}}\|_{L^2 \to L^2} \|(\Lambda + \lambda I)^{-\frac{1}{2}}(T^{\frac{1}{2}}\hat{R}-T^{\frac{1}{2}}\hat{C}_{n}\beta^*)\|_{L^2} \\
       &  + \|C^{\frac{1}{2}}(T^{\frac{1}{2}}g_{\lambda}(\hat{\Lambda}_{n})T^{\frac{1}{2}}\hat{C}_{n}\beta^*-\beta^*)\|_{L^2}.
    \end{split}
\end{equation*}
Using Lemma \ref{Covariance estimation}, Lemma \ref{Empirical bound} and regularization properties $(\ref{reg1}),(\ref{reg2})$, we get
\begin{equation}
\label{nbound}
    \|C^{\frac{1}{2}}(\hat{\beta}-\beta^*)\|_{L^2} 
        \lesssim_{p}  \frac{\lambda^{-\frac{1}{2(t+c)}}}{\sqrt{n}} + \|C^{\frac{1}{2}}(T^{\frac{1}{2}}g_{\lambda}(\hat{\Lambda}_{n})T^{\frac{1}{2}}\hat{C}_{n}\beta^*-\beta^*)\|_{L^2}.
\end{equation}
Since $\beta^*$ appearing in the second term on the r.h.s. of \eqref{nbound} depends on the source condition, we split the analysis into two cases depending on the value of $\alpha$.\\

\noindent
\textbf{Case-1 :} Let $\alpha\in(0,\frac{1}{2}]$.
\begin{equation*}
    \begin{split}
       & \|C^{\frac{1}{2}}(T^{\frac{1}{2}}g_{\lambda}(\hat{\Lambda}_{n})T^{\frac{1}{2}}\hat{C}_{n}\beta^*-\beta^*)\|_{L^2}\\
       \leq &  \underbrace{\|C^{\frac{1}{2}}T^{\frac{1}{2}}(g_{\lambda}(\hat{\Lambda}_{n})-(\hat{\Lambda}_{n}+\lambda I)^{-1})T^{\frac{1}{2}}\hat{C}_{n}\beta^*\|_{L^2}}_{\textit{Term-5}} +\underbrace{\|C^{\frac{1}{2}}(T^{\frac{1}{2}}(\Lambda+\lambda I)^{-1}T^{\frac{1}{2}}C\beta^*-\beta^*)\|_{L^2}}_{\textit{Term-6}}\\
       & + \underbrace{\|C^{\frac{1}{2}}T^{\frac{1}{2}}((\hat{\Lambda}_{n}+\lambda I)^{-1}T^{\frac{1}{2}}\hat{C}_{n}\beta^*-(\Lambda+\lambda I)^{-1}T^{\frac{1}{2}}C\beta^*)\|_{L^2}}_{\textit{Term-7}}.
    \end{split}
\end{equation*}
\noindent
\textit{Bounding Term-5:}
\begin{equation*}
    \begin{split}
         & \|C^{\frac{1}{2}}T^{\frac{1}{2}}(g_{\lambda}(\hat{\Lambda}_{n})-(\hat{\Lambda}_{n}+\lambda I)^{-1})T^{\frac{1}{2}}\hat{C}_{n}\beta^*\|_{L^2} \\
         \leq & \|\Lambda^{\frac{1}{2}}(\hat{\Lambda}_{n}+\lambda I)^{-1}r_{\lambda}(\hat{\Lambda}_{n})T^{\frac{1}{2}}\hat{C}_{n}\beta^*\|_{L^2}+ \lambda \|\Lambda^{\frac{1}{2}}(\hat{\Lambda}_{n}+\lambda I)^{-1}g_{\lambda}(\hat{\Lambda}_{n})T^{\frac{1}{2}}\hat{C}_{n}\beta^*\|_{L^2} \\
         \lesssim & \left[\|(\hat{\Lambda}_{n}+\lambda I)^{-\frac{1}{2}}r_{\lambda}(\hat{\Lambda}_{n})(\hat{\Lambda}_{n}+\lambda I)^{\frac{3}{2}}\|_{L^2 \to L^2} + \lambda \|(\hat{\Lambda}_{n}+\lambda I)^{-\frac{1}{2}}g_{\lambda}(\hat{\Lambda}_{n})(\hat{\Lambda}_{n}+\lambda I)^{\frac{3}{2}}\|_{L^2 \to L^2}\right]\\
         & \times  \|\Lambda^{\frac{1}{2}}(\Lambda+\lambda I)^{-\frac{1}{2}}\|_{L^2 \to L^2} \|(\Lambda+\lambda I)^{\frac{1}{2}}(\hat{\Lambda}_{n}+\lambda I)^{-\frac{1}{2}}\|_{L^2 \to L^2} \|(\hat{\Lambda}_{n}+\lambda I)^{-\frac{3}{2}}T^{\frac{1}{2}}\hat{C}_{n}\beta^*\|_{L^2}.
        \end{split} 
\end{equation*}
\noindent
From Lemma \ref{Covariance estimation} and regularization properties $(\ref{reg1}), (\ref{reg2})$ and $(\ref{qualification})$, we get
\begin{equation*}
\begin{split}
     \|C^{\frac{1}{2}}T^{\frac{1}{2}}(g_{\lambda}(\hat{\Lambda}_{n})-(\hat{\Lambda}_{n}+\lambda I)^{-1})T^{\frac{1}{2}}\hat{C}_{n}\beta^*\|_{L^2}
         \lesssim_{p} &  \lambda \|(\hat{\Lambda}_{n}+\lambda I)^{-\frac{3}{2}}T^{\frac{1}{2}}\hat{C}_{n}\beta^*\|_{L^2}.
         \end{split}
\end{equation*}
Using similar steps of $(\ref{3/2powerbound})$, we get
\begin{equation*}
    \|C^{\frac{1}{2}}T^{\frac{1}{2}}(g_{\lambda}(\hat{\Lambda}_{n})-(\hat{\Lambda}_{n}+\lambda I)^{-1})T^{\frac{1}{2}}\hat{C}_{n}\beta^*\|_{L^2} \lesssim_{p} \textcolor{black}{\frac{\lambda^{-\frac{1-\frac{c}{2}-t\alpha}{(t+c)}}}{\sqrt{n}} } + \lambda^{\frac{\alpha t +\frac{c}{2}}{t+c}}.
\end{equation*}
\textit{Bounding Term-6:}
\begin{equation*}
    \begin{split}
         & \|C^{\frac{1}{2}}(T^{\frac{1}{2}}(\Lambda+\lambda I)^{-1}T^{\frac{1}{2}}C\beta^*-\beta^*)\|_{L^2} 
         =  \|C^{\frac{1}{2}}(T^{\frac{1}{2}}(\Lambda+\lambda I)^{-1}T^{\frac{1}{2}}CT^{\alpha}h-T^{\alpha}h)\|_{L^2}\\
         = & \left[\sum_{i}(\frac{\mu_{i}^{1+\alpha}\xi_{i}^{\frac{3}{2}}}{\mu_{i}\xi_{i}+\lambda}-\mu_{i}^{\alpha}\xi_{i}^{\frac{1}{2}})^2 \langle\phi_{i},h\rangle^2\right]^{\frac{1}{2}} = \left[\sum_{i}(\frac{\lambda \mu_{i}^{\alpha}\xi_{i}^{\frac{1}{2}}}{\mu_{i}\xi_{i}+\lambda})^2 \langle\phi_{i},h\rangle^2\right]^{\frac{1}{2}}\\
         \leq & \lambda \sup_{i} \left[\frac{\mu_{i}^{\alpha}\xi_{i}^{\frac{1}{2}}}{\mu_{i}\xi_{i}+\lambda}\right] \|h\|_{L^2} \lesssim \lambda \sup_{i} \left[\frac{i^{-\alpha t -\frac{c}{2}}}{i^{-(t+c)}+\lambda}\right]\|h\|_{L^2} \lesssim \lambda^{\frac{\alpha t +\frac{c}{2}}{t+c}}.
    \end{split}
\end{equation*}
\textit{Bounding Term-7:}
\begin{equation*}
    \begin{split}
         & \|C^{\frac{1}{2}}T^{\frac{1}{2}}((\hat{\Lambda}_{n}+\lambda I)^{-1}T^{\frac{1}{2}}\hat{C}_{n}\beta^*-(\Lambda+\lambda I)^{-1}T^{\frac{1}{2}}C\beta^*)\|_{L^2} \\
         = & \|\Lambda^{\frac{1}{2}}((\hat{\Lambda}_{n}+\lambda I)^{-1}T^{\frac{1}{2}}\hat{C}_{n}\beta^*-(\Lambda+\lambda I)^{-1}T^{\frac{1}{2}}C\beta^*)\|_{L^2}.
    \end{split}
\end{equation*}
Using the similar steps of $(\ref{2b})$, we get
\begin{equation*}
   \|C^{\frac{1}{2}}T^{\frac{1}{2}}((\hat{\Lambda}_{n}+\lambda I)^{-1}T^{\frac{1}{2}}\hat{C}_{n}\beta^*-(\Lambda+\lambda I)^{-1}T^{\frac{1}{2}}C\beta^*)\|_{L^2} \lesssim_{p} \textcolor{black}{\frac{\lambda^{-\frac{1-\frac{c}{2}-t\alpha}{(t+c)}}}{\sqrt{n}}.}
\end{equation*}
Combining all the bounds, we get
\begin{equation*}
        \|C^{\frac{1}{2}}(T^{\frac{1}{2}}g_{\lambda}(\hat{\Lambda}_{n})T^{\frac{1}{2}}\hat{C}_{n}\beta^*-\beta^*)\|_{L^2} \lesssim_{p} 
            \textcolor{black}{ \frac{\lambda^{-\frac{1-\frac{c}{2}-t\alpha}{(t+c)}}}{\sqrt{n}} }+\lambda^{\frac{\alpha t+ \frac{c}{2}}{t+c}}.
\end{equation*}
\textbf{Case-2:} Suppose $\alpha \geq \frac{1}{2}$. Let $\nu$ be the qualification of the regularization family.
\begin{equation*}
    \begin{split}
        &\|C^{\frac{1}{2}}(T^{\frac{1}{2}}g_{\lambda}(\hat{\Lambda}_{n})T^{\frac{1}{2}}\hat{C}_{n}\beta^*-\beta^*)\|_{L^2} \\
        = & \|C^{\frac{1}{2}}(T^{\frac{1}{2}}g_{\lambda}(\hat{\Lambda}_{n})T^{\frac{1}{2}}\hat{C}_{n}T^{\frac{1}{2}}T^{\alpha-\frac{1}{2}}h-T^{\frac{1}{2}}T^{\alpha-\frac{1}{2}}h)\|_{L^2} \\
         = &  \|C^{\frac{1}{2}}T^{\frac{1}{2}}r_{\lambda}(\hat{\Lambda}_{n})T^{\alpha-\frac{1}{2}}h\|_{L^2}
         =  \|\Lambda^{\frac{1}{2}}r_{\lambda}(\hat{\Lambda}_{n})T^{\alpha-\frac{1}{2}}h\|_{L^2}.
    \end{split}
\end{equation*}
Following the steps of \textbf{Case-2} in Theorem \ref{comm.estimation}, we get 
\begin{equation*}
    \|C^{\frac{1}{2}}(\hat{\beta}-\beta^*)\|_{L^2}^2 \lesssim_{p} \frac{\lambda^{-\frac{1}{t+c}}}{n} + \lambda^{\frac{2 r t+c}{t+c}},
\end{equation*}
where $r = \min \{\alpha , \nu +\frac{1}{2}+\nu \frac{c}{t}\}$.\\
\indent Combining all the bounds, we get
\begin{equation*}
    \|C^{\frac{1}{2}}(\hat{\beta}-\beta^*)\|_{L^2}^2 \lesssim_{p}
    \frac{\lambda^{-\frac{1}{t+c}}}{n}+ \lambda^{\frac{2 r t+ c}{t+c}} ,
\end{equation*}
and the result follows by choosing 
$\lambda = n^{-\frac{t+c}{1+c+ 2 r t}}$.

\subsection{Proof of Theorem \ref{predictionNC}}\label{pcomm-pred2}
    Consider
    \begin{equation*}
        \begin{split}
            \|C^{\frac{1}{2}}(\hat{\beta}-\beta^*)\|_{L^2} = & \|C^{\frac{1}{2}}(J\hat{\beta}-\beta^*)\|_{L^2}
            =  \|C^{\frac{1}{2}}(Jg_{\lambda}(J^*\hat{C}_{n}J)J^*\hat{R}-\beta^*)\|_{L^2}.
        \end{split}
    \end{equation*}
Following the steps of Theorem \ref{predictionbasic}, we get
\begin{equation*}
    \begin{split}
        \|C^{\frac{1}{2}}(\hat{\beta}-\beta^*)\|_{L^2} \lesssim_{p} & \|(\Lambda + \lambda I)^{-\frac{1}{2}}(T^{\frac{1}{2}}\hat{R}-T^{\frac{1}{2}}\hat{C}_{n}\beta^*)\|_{L^2} \\
       & \quad \quad + \|C^{\frac{1}{2}}(T^{\frac{1}{2}}g_{\lambda}(\hat{\Lambda}_{n})T^{\frac{1}{2}}\hat{C}_{n}\beta^*-\beta^*)\|_{L^2}.
    \end{split}
\end{equation*}
Using Lemma \ref{Empirical bound} yields
\begin{equation*}
    \|C^{\frac{1}{2}}(\hat{\beta}-\beta^*)\|_{L^2} \lesssim_{p} \frac{\lambda^{-\frac{1}{2b}}}{\sqrt{n}}+  \|C^{\frac{1}{2}}(T^{\frac{1}{2}}g_{\lambda}(\hat{\Lambda}_{n})T^{\frac{1}{2}}\hat{C}_{n}\beta^*-\beta^*)\|_{L^2}.
\end{equation*} 
Now consider
\begin{equation*}
    \begin{split}
        \|C^{\frac{1}{2}}(T^{\frac{1}{2}}g_{\lambda}(\hat{\Lambda}_{n})T^{\frac{1}{2}}\hat{C}_{n}\beta^*-\beta^*)\|_{L^2} = \|\Lambda^{\frac{1}{2}}r_{\lambda}(\hat{\Lambda}_{n})\Lambda^sh \|_{L^2}.
    \end{split}
\end{equation*}
\textcolor{black}{
Let $\mu = \nu -\frac{1}{2}$. Then
\begin{equation*}
\begin{split}
    \|\Lambda^{\frac{1}{2}}r_{\lambda}(\hat{\Lambda}_{n})\Lambda^sh \|_{L^2} \leq \|(\hat{\Lambda}+\lambda I)^{\frac{1}{2}}r_{\lambda}(\hat{\Lambda}_{n})(\hat{\Lambda}+\lambda I)^{\mu} (\hat{\Lambda}+\lambda I)^{-\mu}\Lambda^sh\|_{L^2}. 
\end{split}
\end{equation*}
Following steps similar to those of Theorem \ref{NCestimation} with $\mu$ in place of $\nu$, we obtain
}
\begin{equation*}
    \begin{split}
        \|C^{\frac{1}{2}}(T^{\frac{1}{2}}g_{\lambda}(\hat{\Lambda}_{n})T^{\frac{1}{2}}\hat{C}_{n}\beta^*-\beta^*)\|_{L^2} \lesssim_{p} & \textcolor{black}{\frac{\lambda^{-\frac{1}{b} + \frac{1}{2}}}{\sqrt{n}}} + \lambda^{\nu} \|(\Lambda + \lambda I)^{-(\nu-\frac{1}{2})}\Lambda^s h\|_{L^2}\\
        \lesssim & \textcolor{black}{\frac{\lambda^{-\frac{1}{b}+ \frac{1}{2}}}{\sqrt{n}}} +\lambda^{r+\frac{1}{2}},
    \end{split}
\end{equation*}
where $r = \min\{s, \nu -\frac{1}{2}\}$. Combining all the bounds, we get
\begin{equation*}
    \|C^{\frac{1}{2}}(\hat{\beta}-\beta^*)\|_{L^2}^{2} \lesssim_{p} \frac{\lambda^{-\frac{1}{b}}}{n} +\lambda^{2r+1}
\end{equation*}
and the result follows by choosing 
$\lambda = n^{-\frac{b}{1+b+2rb}}$.

\subsection{Proof of Theorem \ref{lowercomm}}\label{pcomm-low}
To establish the lower bound, without loss of generality, we assume $\epsilon \sim N(0, \sigma^2)$ since the lower bound for a special case guarantees the lower bound for the general case. Let $M$ be the smallest integer greater than $ c_{0} n^{\frac{1}{q}}$ for some constant $c_{0}>0$ and $q>0$, which will be specified later. For $\theta \in 
    \{0,1\}^M$,  
    define $$f_{\theta}:= \sum_{k=1}^{M}\theta_kM^{-\frac{1}{2}}T^{\alpha}\phi_{k+M}.$$ Let  $P_{\theta}$ denote the $n$-fold product of joint probability distributions for $\{(X_{i}, Y_{i}) : 1 \leq i \leq n\}$ with $\beta^* = f_{\theta}$. Then the Kullback-Leibler divergence between $P_{\theta}$ and $P_{\theta^{'}}$ is $$\mathcal{K}(P_{\theta}, P_{\theta^{'}})  = \frac{n}{2 \sigma^2}\|C^{\frac{1}{2}}(f_{\theta}-f_{\theta^{'}})\|_{L_2}^2$$  for $\theta, \theta^{'}\in\{0,1\}^M.$ \textcolor{black}{We refer the reader to Appendix~\ref{subsec:kl} for the derivation of the above formula.} Using the representation of $f_\theta$ and $f_{\theta^{'}}$, we obtain
    
\begin{equation*}
    \begin{split}        \mathcal{K}(P_{\theta}, P_{\theta^{'}}) 
        & = \frac{n}{2 \sigma^2}\left\|\sum_{k=1}^{M}(\theta_k-\theta_k^{'})M^{-\frac{1}{2}}C^{\frac{1}{2}}T^{\alpha}\phi_{k+M}\right\|_{L_2}^2\\
        & = \frac{n}{2 \sigma^2} \sum_{k=1}^{M}(\theta_k-\theta_k^{'})^2M^{-1}\mu_{k+M}^{2 \alpha}\xi_{k+M} 
         \leq \frac{n}{\sigma^2} M^{-1} \mu_M^{2 \alpha} \xi_M H(\theta,\theta^{'})\\
        &  \leq \frac{d}{\sigma^2} n M^{-1} M^{-2 \alpha t} M^{-c}M
         \leq \frac{d}{\sigma^2 c_0^{q}} M^{q-2 \alpha t-c}\\
         & \textcolor{black}{\leq \frac{d}{\sigma^2 c_{0}^q}M \leq \frac{8d}{\sigma^2 c_{0}^q \log 2} \log N   ,}
    \end{split}
\end{equation*}
\textcolor{black}{where we have used that $c_0n^{\frac{1}{q}}\le M$, $q = 1+2 \alpha t+c$ and $N \geq 2^{\frac{M}{8}}$.}
\textcolor{black}{Let $c_{0} = c'_{0} u^{-\frac{1}{q}} $, where $c'_{0}$ is a large constant such that $\frac{8 d}{\sigma^2 ((c'_{0})^q \log 2)} \leq 1$ then $\mathcal{K}(P_{\theta},P_{\theta^{'}}) \leq u \log N$.}

Next, for $\alpha \geq \frac{1}{2}$, by considering the error term in the RKHS norm, we have
\begin{equation*}
    \begin{split}
        &\|f_{\theta}-f_{\theta^{'}}\|_{\mathcal{H}}^2  = \left\|\sum_{k= 1}^{M}(\theta_k -\theta^{'}_k)M^{-\frac{1}{2}}T^{\alpha}\phi_{k+M}\right\|_{\mathcal{H}}^2 
         = \frac{1}{M} \left\|\sum_{k= 1}^{M} (\theta_k -\theta^{'}_k) T^{\alpha -\frac{1}{2}}\phi_{k+M}\right\|_{L_2}^2 \\
    & = \frac{1}{M} \sum_{k= 1}^{M} (\theta_k -\theta^{'}_k)^2 \mu_{k+M}^{2(\alpha -\frac{1}{2})} 
         \gtrsim M^{-1} H(\theta , \theta^{'}) \mu_{2M}^{2(\alpha -\frac{1}{2})} 
    \stackrel{(*)}{\gtrsim} M^{-1} M M^{-2t(\alpha -\frac{1}{2})} \\
        & = M^{-2t(\alpha -\frac{1}{2})}
         \stackrel{(\dagger)}{\gtrsim} 
          u^{\frac{t(2\alpha -1)}{q}}n^{-\frac{t(2\alpha -1)}{q}},
    \end{split}
\end{equation*}
where we used Lemma~\ref{VGbound} in $(*)$ and the fact that $\frac{M}{2}<c_0 n^{\frac{1}{q}}$ in $(\dagger)$.
Hence,
\begin{equation}
    \label{rkhses}
    \|f_{\theta}-f_{\theta^{'}}\|_{\mathcal{H}} \geq 2\tilde{c_{1}} u^{\frac{t(\alpha -\frac{1}{2})}{1+c+2 \alpha t}} n^{-\frac{t(\alpha -\frac{1}{2})}{1+c+2 \alpha t}},
\end{equation}
for some constant $\tilde{c_{1}}$.\\
Now applying Theorem \ref{tsyback theorem}, we get
\textcolor{black}{$$\inf_{\hat{\beta}} \sup_{\beta^* \in \mathcal{R}(T^{\alpha})} \mathbb{P}\left\{\|\beta^*-\hat{\beta}\|_{\mathcal{H}} \geq \tilde{c_{1}} u^{\frac{t(\alpha -\frac{1}{2})}{1+c+2 \alpha t}} n^{-\frac{t(\alpha -\frac{1}{2})}{1+c+2 \alpha t}}\right\} \geq \frac{\sqrt{N}}{1+ \sqrt{N}}\left(1-2u-\sqrt{\frac{2u}{\log N}}\right).$$
Then for $a = \tilde{c_{1}} u^{\frac{t(\alpha -\frac{1}{2})}{1+c+2 \alpha t}}$ and $\zeta = \frac{t(\alpha -\frac{1}{2})}{1+c+2 \alpha t}$, we have
\begin{equation*}
    \inf_{\hat{\beta}} \sup_{\beta^* \in \mathcal{R}(T^{\alpha})} \mathbb{P}\left\{\|\beta^*-\hat{\beta}\|_{\mathcal{H}} \geq a n^{-\frac{t(\alpha -\frac{1}{2})}{1+c+2 \alpha t}}\right\} \geq \frac{\sqrt{N}}{1+ \sqrt{N}}\left(1-2({a}/{\tilde{c_1}})^{\frac{1}{\zeta}}-\sqrt{\frac{2({a}/{\tilde{c_1}})^{\frac{1}{\zeta}}}{\log N}}\right).
\end{equation*}
Using the fact that $n \to \infty$ implies $N \to \infty$ yields the desired result.\\}

Next, we consider the estimation error term in the $L^2$ norm as
\begin{equation*}
    \begin{split}
       & \|f_{\theta}-f_{\theta^{'}}\|_{L_2}^2  = \left\|\sum_{k=1}^{M}(\theta_k-\theta_k^{'})M^{-\frac{1}{2}}T^{\alpha}\phi_{k+M}\right\|_{L_2}^2
         = M^{-1}\sum_{k=1}^{M}(\theta_k-\theta_k^{'})^2\mu_{k+M}^{2 \alpha} \\
        & \gtrsim M^{-1} H(\theta, \theta^{'}) \mu_{2M}^{2 \alpha}
        \gtrsim M^{-1} M M^{-2 \alpha t}
        = M^{-2 \alpha t} \gtrsim u^{\frac{2 \alpha t}{q}}n^{-\frac{2 \alpha t}{q}}.
    \end{split}
\end{equation*}
Therefore, we get
\begin{equation}
    \label{l2es}
    \|f_{\theta}-f_{\theta^{'}}\|_{L_2} \geq 2 \tilde{c_2} u^{\frac{ \alpha t}{1+c+2 \alpha t}} n^{-\frac{\alpha t}{1+c+2\alpha t}}.
\end{equation}
Now applying Theorem \ref{tsyback theorem}, we get
\textcolor{black}{$$\inf_{\hat{\beta}} \sup_{\beta^* \in \mathcal{R}(T^{\alpha})} \mathbb{P}\left\{\|\beta^*-\hat{\beta}\|_{L^2} \geq \tilde{c_2}  u^{\frac{\alpha t}{1+c+2\alpha t}} n^{-\frac{\alpha t}{1+c+2\alpha t}}\right\} \geq \frac{\sqrt{N}}{1+ \sqrt{N}}\left(1-2u-\sqrt{\frac{2u}{\log N}}\right),$$
resulting in the desired result for $a = \tilde{c_2} u^{\frac{\alpha t}{1+c+2\alpha t}}$.\\}

Next, we consider the error term for the 
 prediction error:
\begin{equation*}
    \begin{split}
        &\|C^{\frac{1}{2}}(f_{\theta}-f_{\theta^{'}})\|_{L^2}^2  = \left\|\sum_{k=1}^{M}(\theta_k-\theta_k^{'})M^{-\frac{1}{2}}C^{\frac{1}{2}}T^{\alpha}\phi_{k+M}\right\|_{L_2}^2\\
        & = M^{-1}\sum_{k=1}^{M}(\theta_k-\theta_k^{'})^2\mu_{k+M}^{2 \alpha} \xi_{k+M}
         \gtrsim M^{-1}(2M)^{-2 \alpha t}(2M)^{-c}H(\theta,\theta^{'})
         \gtrsim M^{-2 \alpha t-c}\\
&     \geq u^{\frac{2 \alpha t+c}{q}}n^{-\frac{2 \alpha t+c}{q}}.
    \end{split}
\end{equation*}
Hence,
\begin{equation}
    \label{pred}
    \|C^{\frac{1}{2}}(f_{\theta}-f_{\theta^{'}})\|_{L^2}^2 \geq 2 \tilde{c_3} u^{\frac{c+2 \alpha t}{1+c+2 \alpha t}} n^{-\frac{c+2 \alpha t}{1+c+2 \alpha t}}.
\end{equation}
Applying Theorem \ref{tsyback theorem}, we get
\textcolor{black}{$$\inf_{\hat{\beta}} \sup_{\beta^* \in \mathcal{R}(T^{\alpha})} \mathbb{P}\left\{\mathbb{E}\langle\hat{\beta}-\beta^*, X\rangle_{L^2}^2 \geq   \tilde{c_3} u^{\frac{c+ 2 \alpha t}{1+c+2\alpha t}} n^{-\frac{c+2 \alpha t}{1+c+2 \alpha t}}\right\} \geq \frac{\sqrt{N}}{1+ \sqrt{N}}\left(1-2u-\sqrt{\frac{2u}{\log N}}\right),$$
which for $a= \tilde{c_3} u^{\frac{c+ 2 \alpha t}{1+c+2\alpha t}}$, provides the desired result.
}

\subsection{Proof of Theorem \ref{lowerNC}}\label{pcomm-lower2}
Let $M$ be the smallest integer greater than $ c_{0} n^{\frac{1}{q}}$ for some constant $c_{0}>0$ and $q > 1+b$. For $\theta \in \{0,1\}^M $, we define $$f_{\theta}:= \sum_{k=1}^{M}\theta_kM^{-\frac{1}{2}}T^{\frac{1}{2}}(T^{\frac{1}{2}}CT^{\frac{1}{2}})^{s}\phi_{k+M}.$$
Similar to the proof of Theorem \ref{lowercomm}, consider the Kullback-Leibler divergence between joint probability distributions $P_{\theta}$ and $P_{\theta^{'}}$:
\begin{equation*}
    \begin{split}
        \mathcal{K}(P_{\theta},P_{\theta^{'}}) & = \frac{n}{2 \sigma^2}\|C^{\frac{1}{2}}(f_{\theta}-f_{\theta^{'}})\|_{L_2}^2  = \frac{n}{2 \sigma^2}\left\|\sum_{k=1}^{M}(\theta_k-\theta_k^{'})M^{-\frac{1}{2}}C^{\frac{1}{2}}T^{\frac{1}{2}}\Lambda^{s}\phi_{k+M}\right\|_{L_2}^2\\
        & = \frac{n M^{-1}}{2 \sigma^2}\sum_{k=1}^{M}(\theta_k-\theta_k^{'})^2 \tau_{k+M}^{1+2s} \leq \frac{d'}{\sigma^2} n M^{-b(1+2s)} = \frac{d'}{\sigma^2} n M^{-b(1+2s)}\\
        & \leq \frac{d'}{\sigma^2 c_{0}^q} M^{-b(1+2s)+q}.
    \end{split}
\end{equation*}
In the last step, we used that $c_{0}n^{\frac{1}{q}} \leq M$.
To satisfy the condition on Kullback divergence, we take $q= 1 + b(1+2s)$ and $c_{0} = c_{0}^{'} u^{-\frac{1}{q}} $, where $c_{0}^{'}$ is a large enough constant.\\
\noindent
Next, we consider the error term in the RKHS norm:
\begin{equation*}
    \begin{split}
        \|f_{\theta}-f_{\theta^{'}}\|_{\mathcal{H}}^2 & =  M^{-1}\sum_{k=1}^{M}(\theta_k-\theta_k^{'})^2 \tau_{k+M}^{2s} \gtrsim M^{-1}\tau_{2M}^{2s}H(\theta, \theta^{'})\\
        & \stackrel{(*)}{\gtrsim} M^{-1} M^{-b(2s)}M = M^{-b(2s)} \stackrel{(\dagger)}{\geq} 2\tilde{c_4} u^{\frac{2sb}{q}} n^{-\frac{2sb}{q}},
    \end{split}
\end{equation*}
where we used Lemma~\ref{VGbound} in $(*)$ and the fact that $\frac{M}{2}<c_0 n^{\frac{1}{q}}$ in $(\dagger)$. Applying Theorem \ref{tsyback theorem}, yields
\textcolor{black}{$$\inf_{\hat{\beta}} \sup_{\beta^* \in \mathcal{R}(T^{\frac{1}{2}}(\Lambda)^s)} \mathbb{P}\left\{\|\hat{\beta}-\beta^*\|_{\mathcal{H}} \geq  \tilde{c_4} u^{\frac{sb}{1+b+2sb}} n^{-\frac{bs}{1+b+2 sb}}\right\} \geq \frac{\sqrt{N}}{1+ \sqrt{N}}\left(1-2u-\sqrt{\frac{2u}{\log N}}\right).$$
Let $a= \tilde{c_4} u^{\frac{sb}{1+b+2sb}}$, then we have}
\begin{equation*}
        \lim_{a \to 0} \lim_{n \to \infty} \inf_{\hat{\beta}} \sup_{\beta^* \in \mathcal{R}(T^{\frac{1}{2}}(\Lambda)^s)} \mathbb{P}\left\{\|\hat{\beta}-\beta^*\|_{\mathcal{H}} \geq a n^{-\frac{sb}{1+b+2 sb}}\right\} =1.
    \end{equation*}

The error term for prediction is given by
\begin{equation*}
    \begin{split}
        \|C^{\frac{1}{2}}(f_{\theta}-f_{\theta^{'}})\|_{L^2}^2 & \gtrsim M^{-1}H(\theta,\theta^{'}) \tau_{2M}^{1+2 s} \\
        & \gtrsim M^{-b(1+2 s)} \geq 2 \tilde{c_5} u^{\frac{b(1+2 s)}{q}} n^{-\frac{b(1+2 s)}{q}},
    \end{split}
\end{equation*}
where applying Theorem \ref{tsyback theorem} yields
\textcolor{black}{\begin{align*}&\inf_{\hat{\beta}} \sup_{\beta^* \in \mathcal{R}(T^{\frac{1}{2}}(\Lambda)^s)} \mathbb{P}\left\{\mathbb{E}\langle\hat{\beta}-\beta^*, X\rangle_{L^2}^2 \geq \tilde{c_5} u^{\frac{b(2s+1)}{1+b+2sb}} n^{-\frac{b(2s+1)}{1+b+2 sb}}\right\} \\
&\qquad\qquad\geq \frac{\sqrt{N}}{1+ \sqrt{N}}\left(1-2u-\sqrt{\frac{2u}{\log N}}\right).\end{align*}
Let $a= \tilde{c_5} u^{\frac{b(2s+1)}{1+b+2sb}}$, then we have}
\begin{equation*}
    \lim_{a \to 0} \lim_{n \to \infty} \inf_{\hat{\beta}} \sup_{\beta^* \in \mathcal{R}(T^{\frac{1}{2}}(\Lambda)^s)} \mathbb{P}\left\{\mathbb{E}\langle\hat{\beta}-\beta^*, X\rangle_{L^2}^2 \geq a n^{-\frac{b(2s+1)}{1+b+2 sb}}\right\} =1.
\end{equation*}

\textcolor{black}{\section*{Acknowledgments}
The authors profusely thank the referees for their valuable comments and suggestions, which helped to significantly improve the manuscript. We are particularly grateful to the reviewer whose insightful feedback resulted in Section~\ref{Sec:similar}. We would also like to thank the reviewer who pointed out an error in the proof of Lemmas~\ref{cmdifference} and \ref{ncmdifference}. BKS is partially supported by the National Science Foundation (NSF) CAREER award DMS-1945396.}
\bibliographystyle{acm}
\bibliography{123}
\appendix
\numberwithin{equation}{section}
\section{Supplementary Results}
\label{supplements}
In this appendix, we present technical results that are used to prove the main results of the paper.

\begin{lemma}
\label{J T}
    For any bounded linear positive operator $\mathcal{A}:L^2(S)\to L^2(S)$, we have
\begin{equation*}
Jg_{\lambda}(J^*\mathcal{A}J)J^* = T^{1/2}g_{\lambda}(T^{\frac{1}{2}}\mathcal{A}T^{\frac{1}{2}})T^{1/2}.
\end{equation*}
\end{lemma}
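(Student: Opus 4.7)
The plan is to prove the identity first for polynomials in $J^*\mathcal{A}J$ (respectively $T^{1/2}\mathcal{A}T^{1/2}$) by direct manipulation, and then to extend to the regularization function $g_\lambda$ via functional calculus. The crucial bookkeeping device is the identity $T = JJ^*$, which is what allows one to ``collapse'' the middle of the two expressions.

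For the polynomial step, it suffices to check the claim for $p(x)=x^k$ and extend by linearity. For $k \geq 1$, telescoping $J^*\mathcal{A}J\cdot J^*\mathcal{A}J\cdots J^*\mathcal{A}J$ using $JJ^* = T$ gives
\[
(J^*\mathcal{A}J)^k = J^*\mathcal{A}(T\mathcal{A})^{k-1}J,
\]
so that $J(J^*\mathcal{A}J)^k J^* = JJ^*\mathcal{A}(T\mathcal{A})^{k-1}JJ^* = (T\mathcal{A})^k T$. Analogously, since $T^{1/2}\cdot T^{1/2}=T$, one obtains $(T^{1/2}\mathcal{A}T^{1/2})^k = T^{1/2}\mathcal{A}(T\mathcal{A})^{k-1}T^{1/2}$, whence $T^{1/2}(T^{1/2}\mathcal{A}T^{1/2})^k T^{1/2} = (T\mathcal{A})^k T$ as well. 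The case $k=0$ reduces to $JJ^* = T = T^{1/2}\cdot T^{1/2}$. Thus the identity $J p(J^*\mathcal{A}J)J^* = T^{1/2} p(T^{1/2}\mathcal{A}T^{1/2}) T^{1/2}$ holds for every polynomial $p$.

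To pass from polynomials to $g_\lambda$, observe that both $J^*\mathcal{A}J$ (on $\mathcal{H}$) and $T^{1/2}\mathcal{A}T^{1/2}$ (on $L^2$) are bounded, positive, self-adjoint operators with spectra contained in a common compact interval $[0,\eta]$. If $g_\lambda$ is continuous on $[0,\eta]$ (Tikhonov, Showalter, Landweber), Weierstrass yields polynomials $p_n \to g_\lambda$ uniformly on $[0,\eta]$; by continuous functional calculus $p_n(J^*\mathcal{A}J) \to g_\lambda(J^*\mathcal{A}J)$ and $p_n(T^{1/2}\mathcal{A}T^{1/2}) \to g_\lambda(T^{1/2}\mathcal{A}T^{1/2})$ in operator norm, and the identity for polynomials passes to the limit after pre/post-composing with the bounded operators $J,J^*,T^{1/2}$.

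The one genuine subtlety—and the main obstacle—is the case of a discontinuous regularizer such as the spectral cut-off. I would handle it via Borel functional calculus: choose a uniformly bounded sequence of continuous functions $g_n$ converging pointwise to $g_\lambda$ on $[0,\eta]$; then dominated convergence for the spectral measure gives strong-operator convergence $g_n(J^*\mathcal{A}J) \to g_\lambda(J^*\mathcal{A}J)$ and likewise for $T^{1/2}\mathcal{A}T^{1/2}$. Since $J$, $J^*$, and $T^{1/2}$ are bounded, SOT convergence is preserved under left/right composition, and the polynomial (hence continuous-function) identity passes to the limit, yielding the claim in full generality.
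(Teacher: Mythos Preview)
Your argument is correct and takes a genuinely different route from the paper. The paper proceeds spectrally: it fixes an eigensystem $(m_i,\phi_i)_i$ of $J^*\mathcal{A}J$ on $\mathcal{H}$, verifies that $(m_i,\,m_i^{-1}T^{1/2}\mathcal{A}J\phi_i)_i$ is an orthonormal eigensystem of $T^{1/2}\mathcal{A}T^{1/2}$ on $L^2$, and then expands both sides in these bases to check the identity termwise. Your approach instead establishes $J(J^*\mathcal{A}J)^kJ^*=(T\mathcal{A})^kT=T^{1/2}(T^{1/2}\mathcal{A}T^{1/2})^kT^{1/2}$ directly from $JJ^*=T$, and then passes to $g_\lambda$ by polynomial approximation (continuous case) or bounded pointwise approximation and SOT limits (Borel case). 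The paper's argument is more concrete and, as a by-product, exhibits the explicit eigenvector correspondence between the two operators; on the other hand it tacitly relies on compactness (discrete spectrum) and requires some care with the zero eigenspace since one divides by $m_i$. Your argument is cleaner in that it needs neither compactness nor any eigenvalue bookkeeping, and it handles the discontinuous regularizers (spectral cut-off) uniformly via the Borel functional calculus.
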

\begin{proof}
Let $(m_i, \phi_i )_i$ be the eigensystem of the operator $J^*\mathcal{A}J:\mathcal{H} \to \mathcal{H}$. It is easy to verify that $(m_i,\frac{T^{1/2}\mathcal{A}J\phi_i}{m_i} )_i$ is the eigensystem of $T^{1/2}\mathcal{A}T^{1/2}$ since $$T^{1/2}\mathcal{A}T^{1/2} T^{1/2}\mathcal{A}J\phi_i=T^{1/2}\mathcal{A} T \mathcal{A}J\phi_i=T^{1/2}\mathcal{A} JJ^*\mathcal{A} J\phi_i=m_i T^{1/2}\mathcal{A} J\phi_i$$
and for any $i, j \in \mathbb{N}$,
    \begin{equation*}
			    \begin{split}
				&\left\langle \frac{T^{1/2}\mathcal{A}J\phi_i}{m_i}, \frac{T^{1/2}\mathcal{A}J\phi_j}{m_j} \right\rangle_{L^2}  = \left\langle \frac{\mathcal{A}J\phi_i}{m_i}, \frac{T\mathcal{A}J\phi_j}{m_j} \right\rangle_{L^2} 
				 = \left\langle \frac{\mathcal{A}J\phi_i}{m_i}, \frac{JJ^{*}\mathcal{A}J\phi_j}{m_j} \right\rangle_{L^2} \\
				& = \left\langle \frac{J^{*}\mathcal{A}J\phi_i}{m_i}, \frac{J^{*}\mathcal{A}J\phi_j}{m_j} \right\rangle_{\mathcal{H}} 
				 = \left\langle \frac{m_i\phi_i}{m_i}, \frac{m_j\phi_j}{m_j} \right\rangle_{\mathcal{H}} 
                 = \langle \phi_i, \phi_j \rangle_{\mathcal{H}} 
				 = \delta_{ij}.
                \end{split}
	\end{equation*}           
Therefore, using the spectral representation of $J^*\mathcal{A}J$, it follows that for all $f \in L^2(S)$,
  \begin{equation*}
      \begin{split}
&T^{1/2}g_{\lambda}(T^{\frac{1}{2}}\mathcal{A}T^{\frac{1}{2}})T^{1/2} f  = \sum_{i} g_{\lambda}(m_i) \left\langle T^{1/2}f , \frac{T^{1/2}\mathcal{A}J\phi_i}{m_i} \right\rangle_{L^2} T^{1/2} \left(\frac{T^{1/2}\mathcal{A}J\phi_i}{m_i}\right)\\
		& = \sum_{i} g_{\lambda}(m_i) \left\langle f , \frac{T\mathcal{A}J\phi_i}{m_i} \right\rangle_{L^2}  \frac{T\mathcal{A}J\phi_i}{m_i}
		 = \sum_{i} g_{\lambda}(m_i) \left\langle f , \frac{JJ^{*}\mathcal{A}J\phi_i}{m_i} \right\rangle_{L^2}  \frac{JJ^{*}\mathcal{A}J\phi_i}{m_i} \\
		& = \sum_{i} g_{\lambda}(m_i) \langle f , J\phi_i \rangle_{L^2}  J\phi_i 
		 = \sum_{i} g_{\lambda}(m_i) \langle J^{*}f , \phi_i \rangle_{\mathcal{H}}  J\phi_i 
		 = Jg_{\lambda}(J^*\mathcal{A}J)J^*f
      \end{split}
  \end{equation*}
  and the result follows.
\end{proof}

\begin{lemma}
\label{Empirical bound}
For any $\delta >0$, with at least probability $1-\delta$, we have that
\begin{equation*}
\|(\Lambda+\lambda I)^{-\frac{1}{2}}T^{\frac{1}{2}}(\hat{R}-\hat{C}_{n}\beta^*)\|_{L^2} \leq \sqrt{\frac{ \sigma^2 \mathcal{N}(\lambda)}{n \delta}}.
\end{equation*}
\end{lemma}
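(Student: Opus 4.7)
The plan is to rewrite the quantity inside the norm as an empirical average of i.i.d.\ mean-zero random vectors in $L^2(S)$, compute its second moment exactly, and then invoke Markov's inequality to obtain the in-probability bound. First I would observe that by the FLR model $Y_i = \langle X_i,\beta^*\rangle_{L^2} + \epsilon_i$, so
\begin{equation*}
\hat{R}-\hat{C}_{n}\beta^* \;=\; \frac{1}{n}\sum_{i=1}^{n}\bigl(Y_iX_i - (X_i\otimes X_i)\beta^*\bigr) \;=\; \frac{1}{n}\sum_{i=1}^{n}\epsilon_i X_i.
\end{equation*}
Setting $Z_i := (\Lambda+\lambda I)^{-\frac{1}{2}}T^{\frac{1}{2}}\epsilon_i X_i \in L^2(S)$, the quantity to control is $\|\frac{1}{n}\sum_{i=1}^n Z_i\|_{L^2}$.

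Next I would verify that $Z_1,\ldots,Z_n$ are i.i.d.\ with $\mathbb{E}[Z_i]=0$, using independence of $\epsilon_i$ and $X_i$ together with $\mathbb{E}[\epsilon_i]=0$. For the second moment, I would compute
\begin{equation*}
\mathbb{E}\|Z_i\|_{L^2}^2 \;=\; \sigma^2\,\mathbb{E}\bigl\langle (\Lambda+\lambda I)^{-1}T^{\frac{1}{2}}X_i,\,T^{\frac{1}{2}}X_i\bigr\rangle_{L^2} \;=\; \sigma^2\,\mathrm{tr}\bigl((\Lambda+\lambda I)^{-1}T^{\frac{1}{2}}CT^{\frac{1}{2}}\bigr) \;=\; \sigma^2\,\mathcal{N}(\lambda),
\end{equation*}
where I used $\mathbb{E}[\epsilon_i^2]=\sigma^2$, the trace identity $\|Au\|^2=\mathrm{tr}(A(u\otimes u)A^*)$, $\mathbb{E}[X_i\otimes X_i]=C$, and the definition $\Lambda=T^{\frac{1}{2}}CT^{\frac{1}{2}}$ with $\mathcal{N}(\lambda)=\mathrm{tr}(\Lambda(\Lambda+\lambda I)^{-1})$.

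By independence and centering of the $Z_i$'s, the cross terms vanish, giving
\begin{equation*}
\mathbb{E}\Bigl\|\tfrac{1}{n}\sum_{i=1}^{n}Z_i\Bigr\|_{L^2}^2 \;=\; \tfrac{1}{n^2}\sum_{i=1}^{n}\mathbb{E}\|Z_i\|_{L^2}^2 \;=\; \frac{\sigma^2\mathcal{N}(\lambda)}{n}.
\end{equation*}
Applying Markov's inequality to the nonnegative random variable $\|\frac{1}{n}\sum Z_i\|_{L^2}^2$ with threshold $\sigma^2\mathcal{N}(\lambda)/(n\delta)$ then yields the claim with probability at least $1-\delta$. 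No sharp concentration inequalities are needed; the only technical point is the trace computation, which is routine. There is no real obstacle in this argument—it is a standard second-moment/Markov bound exploiting that the noise term has an exact covariance structure through $\Lambda$.
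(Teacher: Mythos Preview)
Your proposal is correct and follows essentially the same approach as the paper: define the i.i.d.\ mean-zero summands, compute the second moment via the trace identity to obtain $\sigma^2\mathcal{N}(\lambda)/n$, and conclude with Markov's inequality. The only cosmetic difference is that you simplify $Y_iX_i-(X_i\otimes X_i)\beta^*=\epsilon_iX_i$ upfront using the model, whereas the paper carries the unsimplified form and reduces it to $\epsilon$ inside the second-moment computation.
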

\begin{proof}
For $i= 1,2, \cdots, n$, define $Z_{i}:= (\Lambda+\lambda I)^{-\frac{1}{2}}T^{\frac{1}{2}}[Y_{i}X_{i}-(X_{i}\otimes X_{i})\beta^*]$. Since the slope function $\beta^*$ satisfies the operator equation $C\beta^*= \mathbb{E}[YX]$, the mean of the random variable $Z_{i}$ is zero, i.e.,
$$\mathbb{E}[Z_{i}]=(\Lambda+\lambda I)^{-\frac{1}{2}}T^{\frac{1}{2}}[\mathbb{E}[YX]-C\beta^*] =0.$$
By Markov's inequality, for any $t>0$
$$ \mathbb{P}\left(\left\|\frac{1}{n}\sum_{i=1}^{n}Z_{i}\right\|_{L^2} \geq t\right)\leq \frac{\mathbb{E}\|\frac{1}{n}\sum_{i=1}^{n}Z_{i}\|_{L^2}^{2}}{t^2}.$$
Note that 
$$\mathbb{E}\left\|\frac{1}{n}\sum_{i=1}^{n}Z_{i}\right\|_{L^2}^{2} = \frac{1}{n^2}\sum_{i,j=1}^{n}\mathbb{E}\langle Z_{i},Z_{j}\rangle_{L^2}= \frac{1}{n^2}\sum_{i\neq j}^{n}\mathbb{E}\langle Z_{i},Z_{j}\rangle_{L^2}+ \frac{1}{n^2}\sum_{i=1}^{n}\mathbb{E}\|Z_{i}\|_{L^2}^{2}= \frac{\mathbb{E}\|Z_{1}\|_{L^2}^{2}}{n} $$
and by taking $t = \sqrt{\frac{\mathbb{E}\|Z_{1}\|_{L^2}^{2}}{n\delta}}$, with at least probability $1-\delta$, we have 
\begin{equation}
\label{chebyshevbound}
\|(\Lambda+\lambda I)^{-\frac{1}{2}}T^{\frac{1}{2}}(\hat{R}-\hat{C}_{n}\beta^*)\|_{L^2} \leq \sqrt{\frac{\mathbb{E}\|(\Lambda+\lambda I)^{-\frac{1}{2}}T^{\frac{1}{2}}(YX-(X \otimes X)\beta^*)\|_{L^2}^{2}}{n\delta}}.
\end{equation}
Consider
\begin{equation*}
    \begin{split}
        & \mathbb{E}[\|(\Lambda+\lambda I)^{-\frac{1}{2}}T^{\frac{1}{2}}(YX-(X \otimes X)\beta^*)\|_{L^2}^{2}]\\
        = & \mathbb{E}[\|(\Lambda+\lambda I)^{-\frac{1}{2}}T^{\frac{1}{2}}(Y-\langle X, \beta^* \rangle_{L^2})X\|_{L^2}^{2}]\\
        = & \mathbb{E}[(Y-\langle X, \beta^* \rangle_{L^2})^2\|(\Lambda+\lambda I)^{-\frac{1}{2}}T^{\frac{1}{2}}X\|_{L^2}^{2}]\\
        = & \mathbb{E}[\epsilon^{2} \langle (\Lambda+\lambda I)^{-\frac{1}{2}}T^{\frac{1}{2}}X, (\Lambda+\lambda I)^{-\frac{1}{2}}T^{\frac{1}{2}}X\rangle_{L^2}]\\
        = & \mathbb{E}[\epsilon^{2}\text{trace}((\Lambda+\lambda I)^{-1}T^{\frac{1}{2}}(X \otimes X)T^{\frac{1}{2}})] \\
        = & \mathbb{E}[\epsilon^{2} ] \text{trace}((\Lambda+\lambda I)^{-1}T^{\frac{1}{2}}CT^{\frac{1}{2}}) \\
        = & \sigma^2 \text{trace}((\Lambda+\lambda I)^{-1}\Lambda) = \sigma^2 \mathcal{N}(\lambda).
    \end{split}
\end{equation*}
The result therefore follows from \eqref{chebyshevbound}.
\end{proof}

\begin{lemma}
  \label{reducing}
  For any $n \geq 1$, we have
    \begin{equation*}
    \begin{split}
        (\hat{\Lambda}_{n}+\lambda I )^{-n} - (\Lambda+\lambda I )^{-n} =  &(\hat{\Lambda}_{n}+\lambda I )^{-(n-1)}[(\hat{\Lambda}_{n}+\lambda I )^{-1} - (\Lambda+\lambda I )^{-1}]\\
        & + \sum_{i=1}^{n-1}(\hat{\Lambda}_{n}+\lambda I )^{-i}(\Lambda-\hat{\Lambda}_{n})(\Lambda+\lambda I )^{-(n+1-i)}.
    \end{split}
\end{equation*}
\end{lemma}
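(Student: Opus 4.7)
The identity is purely algebraic, and the plan is to derive it from the first-order resolvent identity $A^{-1} - B^{-1} = A^{-1}(B - A)B^{-1}$ (with the abbreviations $A := \hat{\Lambda}_n + \lambda I$ and $B := \Lambda + \lambda I$) by a telescoping argument. Note $B - A = \Lambda - \hat{\Lambda}_n$, which is the middle factor appearing in the sum on the right-hand side.

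Concretely, I would begin by writing the telescoping decomposition
\begin{equation*}
A^{-n} - B^{-n} = \sum_{j=0}^{n-1}\bigl[A^{-(n-j)}B^{-j} - A^{-(n-j-1)}B^{-(j+1)}\bigr] = \sum_{j=0}^{n-1} A^{-(n-j-1)}\bigl(A^{-1} - B^{-1}\bigr)B^{-j},
\end{equation*}
where the first equality is a standard pairwise-cancellation telescoping and the second comes from factoring $A^{-(n-j-1)}$ on the left and $B^{-j}$ on the right. Substituting the resolvent identity into each summand turns this into $\sum_{j=0}^{n-1} A^{-(n-j)}(\Lambda - \hat{\Lambda}_n)B^{-(j+1)}$, and reindexing $i = n - j$ gives $\sum_{i=1}^{n} A^{-i}(\Lambda - \hat{\Lambda}_n)B^{-(n+1-i)}$. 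To match the form stated in the lemma, I would then peel off the $i = n$ summand, namely $A^{-n}(\Lambda - \hat{\Lambda}_n)B^{-1}$, and rewrite it as $A^{-(n-1)}\bigl[A^{-1} - B^{-1}\bigr]$ by applying the resolvent identity in reverse. The remaining terms $i = 1, \ldots, n-1$ then form exactly the summation on the right-hand side of the lemma.

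Since the argument consists entirely of formal manipulations with no analytic estimates, there is no substantive obstacle. The only point requiring care is to respect the non-commutativity of $A$ and $B$ when using $A^{-1} - B^{-1} = A^{-1}(B-A)B^{-1}$ (the factor $A^{-1}$ must stay on the left, $B^{-1}$ on the right) and to keep the reindexing consistent. As a sanity check one could alternatively run an induction on $n$, with the base case $n = 1$ trivial and the inductive step using $A^{-(n+1)} - B^{-(n+1)} = A^{-1}(A^{-n} - B^{-n}) + (A^{-1} - B^{-1})B^{-n}$.
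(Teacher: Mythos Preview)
Your proof is correct and follows essentially the same algebraic route as the paper's own proof. The paper derives the one-step recurrence $A^{-n}-B^{-n}=A^{-1}(B-A)B^{-n}+A^{-1}\bigl(A^{-(n-1)}-B^{-(n-1)}\bigr)$ (in your notation) and then unfolds it $n-1$ times, which is exactly the inductive alternative you mention at the end; your direct telescoping presentation is just the closed form of that unfolding, so the two arguments coincide.
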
  
\begin{proof}
Define
\begin{equation*}
   \Lambda_{\lambda}:= (\Lambda+\lambda I)^{-1} \quad \text{ and } \quad \hat{\Lambda}_{\lambda}:= (\hat{\Lambda}_{n}+\lambda I)^{-1}.
\end{equation*}
Consider
\begin{equation*}
    \begin{split}
        \hat{\Lambda}_{\lambda}^{n} -\Lambda_{\lambda}^{n}  = & \hat{\Lambda}_{\lambda} (\hat{\Lambda}_{\lambda}^{n-1}-\hat{\Lambda}\Lambda_{\lambda}^{n} - \lambda\Lambda_{\lambda}^{n} ) 
        =  \hat{\Lambda}_{\lambda} (\hat{\Lambda}_{\lambda}^{n-1}-\hat{\Lambda}\Lambda_{\lambda}^{n} - \Lambda_{\lambda}^{n-1} + \Lambda\Lambda_{\lambda}^{n} ).
    \end{split}
\end{equation*}
In the last step, we have used the fact that $\lambda \Lambda_{\lambda}^{n} =\Lambda_{\lambda}^{n-1} - \Lambda\Lambda_{\lambda}^{n}, \quad \forall n \in \mathbb{N} $. Therefore,   
\begin{equation*}
    \begin{split}
        \hat{\Lambda}_{\lambda}^{n} -\Lambda_{\lambda}^{n}  = & \hat{\Lambda}_{\lambda} (\Lambda - \hat{\Lambda}_{n})\Lambda_{\lambda}^{n} + \hat{\Lambda}_{\lambda} (\hat{\Lambda}_{\lambda}^{n-1}-\Lambda_{\lambda}^{n-1}).
    \end{split}
\end{equation*}
Doing similar step $n-1$ times, we get
\begin{equation*}
    \begin{split}
        \hat{\Lambda}_{\lambda}^{n} -\Lambda_{\lambda}^{n} = & \hat{\Lambda}_{\lambda}^{n-1}[\hat{\Lambda}_{\lambda} -\Lambda_{\lambda}] + \sum_{i=1}^{n-1} \hat{\Lambda}_{\lambda}^{i}(\Lambda - \hat{\Lambda}_{n})\Lambda_{\lambda}^{n+1-i}
    \end{split}
\end{equation*}
and the result follows.
\end{proof}

\begin{lemma}
\label{cmdifference}
    Suppose $T$ and $C$ commute and Assumptions \ref{as:1}, \ref{as:3} and \ref{as:5} hold. Then for all choices of $p$ such that $2p (t+c) \geq 2 \alpha t +c$, we have
    \textcolor{black}{
    \begin{equation*}
    \|(\Lambda + \lambda I)^{-p}T^{\alpha}(C-\hat{C}_{n})T^{\alpha}(\Lambda + \lambda I)^{-p}\|_{L^2 \to L^2} \lesssim_{p} \frac{1}{\sqrt{n}} \lambda^{-\frac{1+2p(t+c)-(2 \alpha t +c)}{t+c}}.
\end{equation*}}
\end{lemma}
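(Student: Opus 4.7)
The plan is to set $A := (\Lambda+\lambda I)^{-p} T^{\alpha}$. Since $T$ and $C$ commute (Assumption~\ref{as:3}), $A$ is self-adjoint and diagonal in their common eigenbasis $\{\phi_i\}$, so the quantity to bound equals $\|\hat{M} - M\|_{L^2\to L^2}$, where $M := A C A$ and $\hat{M} := A \hat{C}_n A = \frac{1}{n}\sum_{i=1}^{n} A X_i \otimes A X_i$. Writing $\hat{M} - M = \frac{1}{n}\sum_i \eta_i$ with $\eta_i := A X_i \otimes A X_i - M$ reduces the lemma to a sample-covariance concentration problem whose rate should be governed by the effective rank $\text{trace}(M)/\|M\|_{L^2 \to L^2}$ rather than by $\text{trace}(M)$ alone.

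The core analytic step is a Bernstein-type bound $\|\hat{M} - M\|_{L^2\to L^2} \lesssim_p \sqrt{\|\mathbb{E} \eta_1^2\|_{L^2 \to L^2}/n}$, for which I would control the variance proxy $\|\mathbb{E} \eta_1^2\|_{L^2 \to L^2}$ using Assumption~\ref{as:5}. Expanding gives $\mathbb{E}\eta_1^2 = \mathbb{E}[\|A X_1\|_{L^2}^2 \,AX_1\otimes AX_1] - M^2$. For an arbitrary unit vector $v \in L^2(S)$, Cauchy--Schwarz together with Assumption~\ref{as:5} (applied to the direction $v$ and, through an orthonormal expansion, also to the norm $\|AX_1\|_{L^2}$) yields $\mathbb{E}\|AX_1\|_{L^2}^4 \le d_1(\text{trace}(M))^2$ and $\mathbb{E}\langle AX_1, v\rangle_{L^2}^4 \le d_1 \langle Mv, v\rangle^2 \le d_1 \|M\|_{L^2\to L^2}^2$. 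Combining, $\langle \mathbb{E}\eta_1^2 v, v\rangle \lesssim \|M\|_{L^2 \to L^2}\, \text{trace}(M)$; the supremum over $v$ then gives $\|\mathbb{E} \eta_1^2\|_{L^2 \to L^2} \lesssim \|M\|_{L^2 \to L^2}\, \text{trace}(M)$, and hence $\|\hat{M} - M\|_{L^2 \to L^2} \lesssim_p \sqrt{\|M\|_{L^2 \to L^2}\, \text{trace}(M)/n}$.

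The remainder is a direct spectral computation via Assumption~\ref{as:3} and Lemma~\ref{supbound}. In the eigenbasis, $M$ is diagonal with eigenvalues $\mu_i^{2\alpha}\xi_i/(\mu_i\xi_i+\lambda)^{2p} \asymp i^{-(2\alpha t+c)}/(i^{-(t+c)}+\lambda)^{2p}$. The hypothesis $2p(t+c)\ge 2\alpha t+c$ places the maximizer at $i \asymp \lambda^{-1/(t+c)}$, giving $\|M\|_{L^2 \to L^2} \asymp \lambda^{-(2p(t+c)-2\alpha t-c)/(t+c)}$, while a standard integral estimate applied to the sum of the same eigenvalues (convergent because $c>1$) yields $\text{trace}(M) \asymp \lambda^{-(1+2p(t+c)-2\alpha t-c)/(t+c)}$. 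Multiplying these and taking the square root reproduces exactly the claimed exponent $-\tfrac{1+4p(t+c)-(4\alpha t+2c)}{2(t+c)}$.

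The principal obstacle is justifying the Bernstein step. Classical non-commutative Bernstein inequalities for Hilbert--Schmidt-valued sums require almost-sure or sub-exponential control of $\|\eta_i\|_{L^2 \to L^2}$, whereas Assumption~\ref{as:5} provides only a fourth moment. The required estimate must therefore be derived from a moment-based Markov argument that preserves the sharp variance proxy $\|\mathbb{E}\eta_1^2\|_{L^2 \to L^2}$: the cruder Hilbert--Schmidt bound $\mathbb{E}\|\eta_1\|_{HS}^2 \lesssim (\text{trace}(M))^2$ would only produce an extra factor of $\sqrt{\text{trace}(M)/\|M\|_{L^2 \to L^2}} \asymp \sqrt{\mathcal{N}(\lambda)}$, giving the wrong exponent. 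Since the conclusion is only in the weak $\lesssim_p$ sense, a polynomial-tail Rosenthal-type inequality or a judicious truncation argument should suffice; once that concentration input is in hand, the remaining manipulations are elementary.
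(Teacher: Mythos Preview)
Your strategy is reasonable but takes an unnecessarily hard road. The paper sidesteps the Bernstein obstacle you flag by not aiming at the operator norm directly: it bounds the operator norm by the Hilbert--Schmidt norm from the outset, writes
\[
\|A(C-\hat C_n)A\|_{L^2\to L^2}\;\le\;\Bigl(\sum_{j,k}a_j^2a_k^2\,\langle\phi_j,(C-\hat C_n)\phi_k\rangle_{L^2}^2\Bigr)^{1/2},\qquad a_j=\frac{\mu_j^{\alpha}}{(\mu_j\xi_j+\lambda)^{p}},
\]
takes expectation (with Jensen), and then controls each entry $\mathbb E\langle\phi_j,(C-\hat C_n)\phi_k\rangle^2\le\frac{1}{n}\mathbb E[x_j^2x_k^2]$ via the Karhunen--Lo\`eve expansion $X=\sum_i x_i\phi_i$ together with Assumption~\ref{as:5}. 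In the paper's computation only the diagonal $j=k$ contribution survives, so the double sum collapses to the single sum $\frac{1}{n}\sum_j a_j^4\xi_j^2=\frac{1}{n}\|M\|_{HS}^2$; Lemma~\ref{seriessum} evaluates this to the stated power of $\lambda$, and one application of Markov's inequality gives the $\lesssim_p$ conclusion. No Bernstein, Rosenthal or truncation machinery is invoked.

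This is exactly the ``cruder Hilbert--Schmidt bound'' you dismissed --- but the paper does \emph{not} estimate it by $(\mathrm{trace}\,M)^2$; it lands on $\|M\|_{HS}^2$, and for the polynomially decaying spectrum of $M$ one has $\|M\|_{HS}^2\asymp\|M\|_{L^2\to L^2}\,\mathrm{trace}(M)$, so the resulting exponent coincides with the one your variance-proxy computation produces. In other words, the loss in passing from operator norm to HS norm is already absorbed by the eigenvalue decay, and the concentration input you identify as the principal obstacle is never needed. Your operator-norm route is conceptually sharper and would be the right tool if the HS bound genuinely overshot, but here the elementary second-moment/Markov argument already reaches the target rate.
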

\begin{proof}
Consider
\begin{equation*}
    \begin{split}
        &\|(\Lambda + \lambda I)^{-p}T^{\alpha}(C-\hat{C}_{n})T^{\alpha}(\Lambda + \lambda I)^{-p}\|_{L^2 \to L^2} \\
        = & \sup_{h: \|h\|_{L^2}=1} \langle h,(\Lambda + \lambda I)^{-p}T^{\alpha}(C-\hat{C}_{n})T^{\alpha}(\Lambda + \lambda I)^{-p}h \rangle_{L^2}\\
        = & \sup_{h: \|h\|_{L^2}=1} \langle T^{\alpha}(\Lambda + \lambda I)^{-p} h,(C-\hat{C}_{n})T^{\alpha}(\Lambda + \lambda I)^{-p}h \rangle_{L^2}.
    \end{split}
\end{equation*}
Using the spectral representation of $T$ and $C$, we get
\begin{equation*}
    \begin{split}
        & \langle T^{\alpha}(\Lambda + \lambda I)^{-p} h,(C-\hat{C}_{n})T^{\alpha}(\Lambda + \lambda I)^{-p}h \rangle_{L^2} \\
        = &  \left\langle \sum_{j \geq 1} \frac{\mu_{j}^{\alpha}\langle h, \phi_{j} \rangle_{L^2}}{(\mu_{j}\xi_{j}+\lambda)^{p}}\phi_{j} , \sum_{k \geq 1} \frac{\mu_{k}^{\alpha}\langle h, \phi_{k} \rangle_{L^2}}{(\mu_{k}\xi_{k}+\lambda)^{p}}(C-\hat{C}_{n})\phi_{k} \right\rangle_{L^2}\\
        = & \sum_{j,k \geq 1} \frac{\mu_{j}^{\alpha}\mu_{k}^{\alpha}\langle h, \phi_{j} \rangle_{L^2}\langle h, \phi_{k} \rangle_{L^2}}{(\mu_{j}\xi_{j}+\lambda)^{p}(\mu_{k}\xi_{k}+\lambda)^{p}} \langle \phi_{j}, (C-\hat{C}_{n})\phi_{k}\rangle_{L^2}\\
        \le &\left(\sum_{j,k \geq 1} \frac{\mu_{j}^{2\alpha}\mu_{k}^{2\alpha}}{(\mu_{j}\xi_{j}+\lambda)^{2p}(\mu_{k}\xi_{k}+\lambda)^{2p}} \langle \phi_{j}, (C-\hat{C}_{n})\phi_{k}\rangle^2_{L^2}\right)^{\frac{1}{2}}\Vert h\Vert_{L^2},
    \end{split}
\end{equation*}
where we applied Cauchy-Schwartz inequality in the last inequality. Therefore, 
\begin{equation*}
    \begin{split}
        & \mathbb{E}\|(\Lambda + \lambda I)^{-p}T^{\alpha}(C-\hat{C}_{n})T^{\alpha}(\Lambda + \lambda I)^{-p}\|_{L^2 \to L^2} \\
        \leq & \left(\sum_{j,k \geq 1} \frac{\mu_{j}^{2\alpha}\mu_{k}^{2\alpha}}{(\mu_{j}\xi_{j}+\lambda)^{2p}(\mu_{k}\xi_{k}+\lambda)^{2p}}\mathbb{E}\langle\phi_{j}, (C-\hat{C}_{n})\phi_{k}\rangle^2_{L^2}\right)^{\frac{1}{2}},
    \end{split}
\end{equation*}
which follows through an application of Jensen's inequality.
Now we consider 
\begin{equation*}
    \begin{split}      &\mathbb{E}\langle\phi_{j}, (C-\hat{C}_{n})\phi_{k}\rangle^2_{L^2} =   \mathbb{E}\langle\phi_{j},\frac{1}{n}\sum_{i=1}^{n}(C-X_{i} \otimes X_{i} )\phi_{k}\rangle^2_{L^2}\\
        = & \mathbb{E}\left[\frac{1}{n}\sum_{i=1}^{n}\langle \phi_{j}, (C- X_{i} \otimes X_{i})\phi_{k}\rangle_{L^2}\right]^2
        \le \frac{1}{n} \mathbb{E} \langle \phi_{j}, (X \otimes X)\phi_{k} \rangle_{L^2}^2\\
         =&  \frac{1}{n} \mathbb{E} \langle X, (\phi_{j} \otimes \phi_{k})X \rangle_{L^2}^2=\textcolor{black}{\mathbb{E}[\langle X, \phi_{j} \rangle^2_{L^2} \langle X, \phi_{k} \rangle^2_{L^2}] = \mathbb{E}[x_{j}^2 x_{k}^2],}
    \end{split}
\end{equation*}
\textcolor{black}{where we used the representation from  Theorem~\ref{KLrep} that $X = \sum x_{i}\phi_{i}$ with $\mathbb{E}[x_{i}] = 0 $ and $\mathbb{E}[x_{i}x_{j}] = \delta_{ij}\xi_{i}$.}
\textcolor{black}{Using Cauchy-Schwartz inequality and Assumption \ref{as:5}, we have
\begin{equation*}
\mathbb{E} [x^2_jx^2_k] \leq (\mathbb{E}[x_{j}^4] \mathbb{E}[x_{k}^4])^{\frac{1}{2}} \lesssim \xi_{j} \xi_{k}.
\end{equation*}
}
\textcolor{black}{Therefore,
\begin{equation*}
    \begin{split}
&\mathbb{E}\|(\Lambda + \lambda I)^{-p}T^{\alpha}(C-\hat{C}_{n})T^{\alpha}(\Lambda + \lambda I)^{-p}\|_{L^2 \to L^2} \\ \lesssim & \frac{1}{\sqrt{n}} \left(\sum_{j,k \geq 1}\frac{\mu_{j}^{2 \alpha}\mu_{k}^{2 \alpha} \xi_{j} \xi_{k}}{(\mu_{j}\xi_{j}+\lambda)^{2p}(\mu_{k}\xi_{k}+\lambda)^{2p}}\right)^{\frac{1}{2}} 
        =  \frac{1}{\sqrt{n}} \sum_{j \geq 1}\frac{\mu_{j}^{2 \alpha} \xi_{j}}{(\mu_{j}\xi_{j}+\lambda)^{2p}}\\
         \lesssim & \frac{1}{\sqrt{n}} \sum_{j \geq 1} \frac{j^{-2 \alpha t} j^{-c}}{(j^{-(t+c)}+\lambda)^{2p}}\lesssim \frac{1}{\sqrt{n}} \lambda^{-\frac{1+2p(t+c)-(2 \alpha t +c)}{t+c}},
    \end{split}
\end{equation*}
where the last inequality follows from Lemma \ref{seriessum} for $2p(t+c) \geq 2 \alpha t +c$.
}
The result follows from an application of Markov's inequality.
\end{proof}

\begin{lemma}
\label{ncmdifference}
    Suppose Assumptions \ref{as:2},
    \ref{as:4} and \ref{as:5} hold. Then for $p\geq \frac{1}{2}$, we have
    \textcolor{black}{
    \begin{equation*}
    \|(\Lambda+\lambda I)^{-p}(\Lambda-\hat{\Lambda}_{n})(\Lambda+\lambda I)^{-p}\|_{L^2 \to L^2} \lesssim_{p} \frac{1}{\sqrt{n}} \lambda^{-\frac{1+2bp-b}{b}}.
\end{equation*}
}
\end{lemma}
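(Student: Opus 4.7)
The plan is to mimic the strategy of Lemma \ref{cmdifference} but work in the eigenbasis $(\tau_j,e_j)_j$ of $\Lambda$ supplied by Assumption \ref{as:4}, since $T$ and $C$ no longer share an eigenbasis. The structural starting point is the decomposition $\Lambda-\hat{\Lambda}_n=\frac{1}{n}\sum_{i=1}^n W_i$ with $W_i:=\Lambda-T^{\frac{1}{2}}(X_i\otimes X_i)T^{\frac{1}{2}}$ i.i.d.\ and centered. The first step is to reduce the general $p\geq\tfrac12$ case to the base case $p=\tfrac12$ by a spectral factoring: since $\Lambda$ is positive and $p\geq\tfrac12$, the spectral theorem gives $\|(\Lambda+\lambda I)^{-(p-\frac12)}\|_{L^2\to L^2}\le\lambda^{-(p-\frac12)}$, and writing $(\Lambda+\lambda I)^{-p}=(\Lambda+\lambda I)^{-(p-\frac12)}(\Lambda+\lambda I)^{-\frac12}$ on both sides, combined with sub-multiplicativity of the operator norm, yields
$$\|(\Lambda+\lambda I)^{-p}(\Lambda-\hat{\Lambda}_n)(\Lambda+\lambda I)^{-p}\|_{L^2\to L^2}\le\lambda^{-(2p-1)}\,\bigl\|(\Lambda+\lambda I)^{-\frac12}(\Lambda-\hat{\Lambda}_n)(\Lambda+\lambda I)^{-\frac12}\bigr\|_{L^2\to L^2}.$$

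For the base case, the target is $\|(\Lambda+\lambda I)^{-\frac12}(\Lambda-\hat{\Lambda}_n)(\Lambda+\lambda I)^{-\frac12}\|_{L^2\to L^2}\lesssim_p\sqrt{\mathcal{N}(\lambda)/n}$, which I plan to obtain from concentration of the sandwiched summands $Z_i:=(\Lambda+\lambda I)^{-\frac12}W_i(\Lambda+\lambda I)^{-\frac12}$. Two ingredients are needed: an essentially-sure operator norm bound on $Z_i$, and an effective-variance control $\|\mathbb{E}[Z_1^2]\|_{L^2\to L^2}\lesssim\mathcal{N}(\lambda)$. Both are derived from Assumption \ref{as:5} together with the trace identity
$$\mathbb{E}\langle X,T^{\frac12}(\Lambda+\lambda I)^{-1}T^{\frac12}X\rangle_{L^2}=\mathrm{trace}\bigl((\Lambda+\lambda I)^{-1}\Lambda\bigr)=\mathcal{N}(\lambda),$$
by applying the A5 inequality to linear functionals $f=T^{\frac12}e$ in order to quadratize the fourth-moment terms. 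Assumption \ref{as:4} then gives $\mathcal{N}(\lambda)=\sum_i\tau_i/(\tau_i+\lambda)\asymp\lambda^{-1/b}$, hence $\sqrt{\mathcal{N}(\lambda)/n}\asymp\lambda^{-1/(2b)}/\sqrt{n}$, and substitution into the factoring step produces
$$\lambda^{-(2p-1)}\cdot\frac{\lambda^{-1/(2b)}}{\sqrt{n}}=\frac{\lambda^{-\frac{1+4bp-2b}{2b}}}{\sqrt{n}},$$
which is the claim.

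The main obstacle will be the base-case concentration bound. A direct Hilbert--Schmidt, second-moment computation in the spirit of Lemma \ref{cmdifference} only yields the weaker rate $\mathcal{N}(\lambda)/\sqrt{n}$, because in the non-commutative setting the expectation $\mathbb{E}\langle e_j,W_1 e_k\rangle_{L^2}^2$ does not carry the $\delta_{jk}$ Kronecker factor that the commutative argument enjoyed: there the eigenfunctions $\phi_i$ simultaneously diagonalize $T$, $C$ and the Karhunen--Lo\`eve coordinates of $X$, so the off-diagonal ($j\neq k$) contributions vanish, whereas here the $e_j$ diagonalize only $\Lambda$ and the resulting double sum is $(\sum_j\tau_j/(\tau_j+\lambda))^2=\mathcal{N}(\lambda)^2$ rather than $\mathcal{N}(\lambda)$. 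Closing this $\sqrt{\mathcal{N}(\lambda)}$ gap forces the passage from the Hilbert--Schmidt to the operator norm through a Bernstein-type inequality for sums of i.i.d.\ self-adjoint operators on a Hilbert space, which is a standard tool in the learning-theory literature and, given the hypothesis $p\geq\tfrac12$ together with Assumption \ref{as:5}, gives exactly the $\sqrt{\mathcal{N}(\lambda)/n}$ rate needed to complete the proof.
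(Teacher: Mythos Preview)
Your proposal diverges from the paper's proof at the central step. The paper does \emph{not} abandon the Hilbert--Schmidt route; it carries it through directly for every $p\geq\tfrac12$, with no reduction to $p=\tfrac12$ and no Bernstein inequality. The device you are missing is this: since $\mathbb{E}[T^{1/2}X\otimes T^{1/2}X]=\Lambda$, the Karhunen--Lo\`eve expansion of the \emph{transformed} process $T^{1/2}X$ (rather than $X$) reads $T^{1/2}X=\sum_i x_i e_i$ with $\mathbb{E}[x_i x_j]=\delta_{ij}\tau_i$, so the uncorrelated-coordinate structure that drove the commutative argument is available here too, just in the $\Lambda$-eigenbasis $(e_j)$ rather than the $C$-eigenbasis $(\phi_j)$. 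With this expansion one has $\langle T^{1/2}X,(e_j\otimes e_k)T^{1/2}X\rangle=x_jx_k$, and the paper argues, exactly as in Lemma~\ref{cmdifference} (invoking Assumption~\ref{as:5} with $f=T^{1/2}e_j$), that $\mathbb{E}\langle e_j,(\Lambda-\hat\Lambda_n)e_k\rangle^2\lesssim n^{-1}\tau_j^2\,\delta_{jk}$. The double sum in the Hilbert--Schmidt bound therefore collapses to the single sum $\sum_j\tau_j^2/(\tau_j+\lambda)^{4p}$, and Lemma~\ref{seriessum} with $(\alpha,\beta,q)=(2b,b,4p)$---whose hypothesis $q\geq\alpha/\beta$ is precisely $p\geq\tfrac12$---finishes the proof. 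In short, your diagnosis that ``the off-diagonal contributions do not vanish'' overlooks that the right object to expand is $T^{1/2}X$, not $X$.

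Your Bernstein alternative is thus unnecessary, and as written it would require an almost-sure operator-norm bound on the summands $(\Lambda+\lambda I)^{-1/2}W_i(\Lambda+\lambda I)^{-1/2}$, which is not available under the paper's hypotheses (only the fourth-moment condition of Assumption~\ref{as:5} is assumed, not boundedness of $X$); making that route rigorous would need a moment-based operator Bernstein inequality and extra work. Your spectral-factoring reduction from general $p$ to $p=\tfrac12$ is correct and does give the right exponent, but the paper handles all $p\geq\tfrac12$ in one stroke via the single-sum bound.
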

\begin{proof}
Following similar steps as in Lemma \ref{cmdifference}, we have
    \begin{equation*}
        \begin{split}
         & \mathbb{E}\|(\Lambda+\lambda I)^{-p}(\Lambda-\hat{\Lambda}_{n})(\Lambda+\lambda I)^{-p}\|_{L^2 \to L^2}\\
         \lesssim & \left(\sum_{j,k \geq 1} \frac{1}{(\tau_{j}+\lambda)^{2p}(\tau_{k}+\lambda)^{2p}}\mathbb{E}\langle e_{j}, (\Lambda-\hat{\Lambda}_{n})e_{k}\rangle^2_{L^2}\right)^{\frac{1}{2}},
        \end{split}
    \end{equation*}
    where $(\tau_j,e_j)_j$ is the eigensystem of $\Lambda$. 
Note that \begin{equation*}
        \begin{split}
\mathbb{E}\langle e_{j}, (\Lambda-\hat{\Lambda}_{n})e_{k}\rangle^2_{L^2} = \frac{1}{n} \mathbb{E}\langle T^{\frac{1}{2}}X, (e_{j}\otimes e_{k})T^{\frac{1}{2}}X\rangle_{L^2}^2.
        \end{split}
    \end{equation*}
\textcolor{black}{Since $\mathbb{E}[T^{\frac{1}{2}}X \otimes T^{\frac{1}{2}}X]= \Lambda$, by Theorem \ref{KLrep}, we have $T^{\frac{1}{2}}X = \sum_i{x_{i}e_{i}}$, where $\mathbb{E}[x_{i}]= 0$ and $\mathbb{E}[x_{i}x_{j}]= \delta_{ij}\tau_{i}$.} Then using similar ideas as in Lemma~\ref{cmdifference}, we get
\textcolor{black}{
\begin{equation*}
    \begin{split}
        \mathbb{E}\langle T^{\frac{1}{2}}X, (e_{j}\otimes e_{k})T^{\frac{1}{2}}X\rangle_{L^2}^2 \lesssim \tau_{j} \tau_{k}
    \end{split}
\end{equation*}
resulting in \begin{equation*}
    \begin{split}        &\|(\Lambda+\lambda I)^{-p}(\Lambda-\hat{\Lambda}_{n})(\Lambda+\lambda I)^{-p}\|_{L^2 \to L^2} \lesssim_{p}  \frac{1}{\sqrt{n}}\sum_{j}\frac{\tau_{j}}{(\tau_{j}+\lambda)^{2p}}\\
        \lesssim{} & \frac{1}{\sqrt{n}} \sum_{j}\frac{j^{-b}}{(j^{-b}+\lambda )^{2p}}
        \lesssim  \frac{1}{\sqrt{n}} \lambda^{-\frac{1+2bp-b}{b}},
    \end{split}
\end{equation*}
}
provided $p \geq \frac{1}{2}$.
\end{proof}

\begin{lemma}
\label{Covariance estimation}
Let $\delta \in (0,1)$ and $\lambda \geq c_{\delta} n^{-\frac{b}{b+1}}$ for some positive constant $c_{\delta}$. Under Assumptions \ref{as:4} and \ref{as:5}, with probability at least $1-\delta$, the following holds:

\begin{enumerate}[label=(\alph*)]
    \item  $\|(\hat{\Lambda}_{n}-\Lambda)(\Lambda+\lambda I)^{-1}\|_{L^2 \to L^2} \leq \frac{1}{2}$;
    \item $\|(\Lambda+\lambda I)^{l}(\hat{\Lambda}_{n}+\lambda I)^{-l}\|_{L^2 \to L^2} \leq 2^{l}, \quad~ \forall l \in [0,1]$;
    \item $\|(\Lambda+\lambda I)^{-l}(\hat{\Lambda}_{n}+\lambda I)^{l}\|_{L^2 \to L^2} \leq (\frac{3}{2})^{l}, \quad~ \forall l \in [0,1]$.
\end{enumerate}

\end{lemma}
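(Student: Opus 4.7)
The proof proceeds in the order (a), then the $l=1$ cases of (b)--(c), then the full range $l\in[0,1]$ via a Cordes-type inequality.

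For (a), the plan is to mimic the second-moment/Markov scheme that underlies Lemma~\ref{ncmdifference}. Writing
\[(\hat{\Lambda}_n-\Lambda)(\Lambda+\lambda I)^{-1}=\frac{1}{n}\sum_{i=1}^n\bigl[T^{1/2}(X_i\otimes X_i)T^{1/2}-\Lambda\bigr](\Lambda+\lambda I)^{-1}\]
as an i.i.d.\ mean-zero sum and bounding operator norm by Hilbert--Schmidt norm, I expand in the eigenbasis $(\tau_j,e_j)$ of $\Lambda$ to get
\[\mathbb{E}\|(\hat{\Lambda}_n-\Lambda)(\Lambda+\lambda I)^{-1}\|_{HS}^2=\sum_{j,k}\frac{\mathbb{E}\langle e_j,(\hat{\Lambda}_n-\Lambda)e_k\rangle^2}{(\tau_k+\lambda)^2}.\]
Exactly as in Lemma~\ref{ncmdifference}, Assumption~\ref{as:5} combined with the Karhunen--Lo\`eve expansion of $T^{1/2}X$ gives $\mathbb{E}\langle e_j,(\hat{\Lambda}_n-\Lambda)e_k\rangle^2\lesssim \tau_j^2\delta_{jk}/n$, and then Assumption~\ref{as:4} reduces the right-hand side to $\frac{1}{n}\sum_j\tau_j^2/(\tau_j+\lambda)^2\lesssim \lambda^{-1/b}/n$. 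Markov's inequality then produces $\|(\hat{\Lambda}_n-\Lambda)(\Lambda+\lambda I)^{-1}\|\le 1/2$ with probability at least $1-\delta$, provided $c_\delta$ in the hypothesis $\lambda\ge c_\delta n^{-b}$ is chosen large enough.

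The $l=1$ cases of (b) and (c) follow from (a) by elementary algebra. Since $\hat{\Lambda}_n-\Lambda$ and $(\Lambda+\lambda I)^{-1}$ are self-adjoint, taking adjoints yields $\|(\Lambda+\lambda I)^{-1}(\hat{\Lambda}_n-\Lambda)\|\le 1/2$ as well, so
\[\|(\Lambda+\lambda I)^{-1}(\hat{\Lambda}_n+\lambda I)\|=\|I+(\Lambda+\lambda I)^{-1}(\hat{\Lambda}_n-\Lambda)\|\le \tfrac{3}{2},\]
which is (c) at $l=1$. For (b) at $l=1$, let $M:=\|(\Lambda+\lambda I)(\hat{\Lambda}_n+\lambda I)^{-1}\|$. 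The identity $(\Lambda+\lambda I)(\hat{\Lambda}_n+\lambda I)^{-1}=I-(\hat{\Lambda}_n-\Lambda)(\hat{\Lambda}_n+\lambda I)^{-1}$ together with
\[\|(\hat{\Lambda}_n-\Lambda)(\hat{\Lambda}_n+\lambda I)^{-1}\|\le\|(\hat{\Lambda}_n-\Lambda)(\Lambda+\lambda I)^{-1}\|\cdot\|(\Lambda+\lambda I)(\hat{\Lambda}_n+\lambda I)^{-1}\|\le \tfrac{M}{2}\]
gives the self-bounding estimate $M\le 1+M/2$, hence $M\le 2$.

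Finally, the full ranges in (b) and (c) follow from the Cordes inequality $\|A^lB^l\|\le\|AB\|^l$ valid for positive self-adjoint operators $A,B$ and $l\in[0,1]$: take $A=\Lambda+\lambda I$, $B=(\hat{\Lambda}_n+\lambda I)^{-1}$ for (b), and $A=(\Lambda+\lambda I)^{-1}$, $B=\hat{\Lambda}_n+\lambda I$ for (c). The principal hurdle is the concentration step (a); matching the stated threshold $\lambda\ge c_\delta n^{-b}$ rather than the cruder $n^{-b/(b+1)}$ coming from a naive Hilbert--Schmidt bound requires the (near-)diagonal structure of the matrix-element moments, which is precisely the delicate point that drives Lemma~\ref{ncmdifference}. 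Once (a) is established, the remaining two steps are essentially algebraic.
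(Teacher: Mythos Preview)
Your proposal is correct and follows essentially the same route as the paper. Part (a) via the Hilbert--Schmidt/second-moment bound and Markov, the diagonal moment estimate $\mathbb{E}\langle e_j,(\hat{\Lambda}_n-\Lambda)e_k\rangle^2\lesssim \tau_j^2\delta_{jk}/n$, and the Cordes step for general $l\in[0,1]$ are all exactly what the paper does; the only cosmetic difference is that for the $l=1$ case of (b) the paper uses the Neumann series identity $(\Lambda+\lambda I)(\hat{\Lambda}_n+\lambda I)^{-1}=\bigl[I+(\hat{\Lambda}_n-\Lambda)(\Lambda+\lambda I)^{-1}\bigr]^{-1}$ and the bound $\|(I+A)^{-1}\|\le 1/(1-\|A\|)$, whereas you use the equivalent self-bounding inequality $M\le 1+M/2$.
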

\begin{proof}
    We start by considering
\begin{align*}
&\|(\hat{\Lambda}_{n}-\Lambda)(\Lambda+\lambda I)^{-1}\|_{L^2 \to L^2}=
    \|(\Lambda+\lambda I)^{-1}(\hat{\Lambda}_{n}-\Lambda)\|_{L^2 \to L^2}\\
    &=  \sup_{h,g : \|h\|_{L^2},\|g\|_{L^2}=1} |\langle (\Lambda+\lambda I)^{-\frac{1}{2}}g, (\Lambda+\lambda I)^{-\frac{1}{2}}(\hat{\Lambda}_{n}-\Lambda)h\rangle_{L^2}|.
 \end{align*}
    Using the fact that $h = \sum_{j\geq 1} \langle  h,e_{j}\rangle_{L^2}e_{j}$ and $g = \sum_{j\geq 1} \langle  g,e_{j}\rangle_{L^2}e_{j}$, we get
\begin{equation*}
    \begin{split}
        & \langle (\Lambda+\lambda I)^{-\frac{1}{2}}g, (\Lambda+\lambda I)^{-\frac{1}{2}}(\hat{\Lambda}_{n}-\Lambda)h\rangle_{L^2}\\
        &  =  \left\langle \sum_{j\geq 1} \frac{\langle g,e_{j}\rangle_{L^2}e_{j}}{(\tau_{j}+\lambda)^{\frac{1}{2}}}, \sum_{k,l\geq 1}\frac{\langle  h,e_{k}\rangle_{L^2}\langle e_l,(\hat{\Lambda}_{n}-\Lambda)e_{k}\rangle_{L^2}e_l}{(\tau_{l}+\lambda)^{\frac{1}{2}}}\right\rangle_{L^2}\\
         &  =  \sum_{j,k \geq 1} \frac{\langle  g,e_{j}\rangle_{L^2}\langle  h,e_{k}\rangle_{L^2}}{\tau_j+\lambda}
         \langle e_{j}, (\hat{\Lambda}_{n}-\Lambda)e_{k}\rangle_{L^2}.
    \end{split}
\end{equation*}
Applying Cauchy–Schwartz inequality yields
\begin{equation*}
    \begin{split}
        \|(\Lambda+\lambda I)^{-1}(\hat{\Lambda}_{n}-\Lambda)\|_{L^2 \to L^2} \leq \left(\sum_{j,k \geq 1} \frac{1}{(\tau_j+\lambda)^2}
        \langle e_{j}, (\hat{\Lambda}_{n}-\Lambda)e_{k}\rangle_{L^2}^2\right)^\frac{1}{2}.
    \end{split}
\end{equation*}
Now taking expectations on both sides and applying Jensen's Inequality, we have
\begin{equation*}
\mathbb{E}\|(\Lambda+\lambda I)^{-1}(\hat{\Lambda}_{n}-\Lambda)\|_{L^2 \to L^2} \leq \left(\sum_{j,k \geq 1} \frac{1}{(\tau_j+\lambda)^2}
        \mathbb{E}\langle e_{j}, (\hat{\Lambda}_{n}-\Lambda)e_{k}\rangle_{L^2}^2\right)^\frac{1}{2}.
\end{equation*}
From Lemma \ref{ncmdifference}, we can see that
\textcolor{black}{
\begin{equation*}
    \mathbb{E}\langle e_{j}, (\hat{\Lambda}_{n}-\Lambda)e_{k}\rangle_{L^2}^2 \lesssim \frac{1}{n} \tau_{j} \tau_{k}.
\end{equation*}
}
Therefore, an application of Markov's inequality yields
\textcolor{black}{
\begin{equation*}
\begin{split}
    \|(\Lambda+\lambda I)^{-1}(\hat{\Lambda}_{n}-\Lambda)\|_{L^2 \to L^2} \lesssim_{p} & \frac{1}{\sqrt{n}}\left(\sum_{j, k \geq 1} \frac{\tau_{j} \tau_{k}}{(\tau_j+\lambda)^2} \right)^{\frac{1}{2}} \lesssim \frac{1}{\sqrt{n}}\left(\sum_{j \geq 1} \frac{\tau_{j}}{(\tau_j+\lambda)^2} \right)^{\frac{1}{2}} \\
    \lesssim & \frac{1}{\sqrt{n}}\left(\sum_{j \geq 1} \frac{j^{-b}}{(j^{-b}+\lambda)^2} \right)^{\frac{1}{2}}
 \lesssim \frac{1}{\sqrt{n}} \lambda^{-\frac{1}{2b}-\frac{1}{2}}
    \end{split}
\end{equation*}
}
and the result in $(a)$ follows under \textcolor{black}{$\lambda\gtrsim n^{-\frac{b}{b+1}}$}.
As a consequence of part $(a)$, we get
\begin{equation*}
    \begin{split}
        &\|(\Lambda+\lambda I)(\hat{\Lambda}_{n}+\lambda I)^{-1}\|_{L^2 \to L^2} 
         = \|[(\hat{\Lambda}_{n}-\Lambda)(\Lambda+\lambda I)^{-1}+I]^{-1}\|_{L^2 \to L^2} \\
         & \leq  \frac{1}{1-\|(\hat{\Lambda}_{n}-\Lambda)(\Lambda+\lambda I)^{-1}\|_{L^2 \to L^2}}
         \leq  2, \quad \forall  \lambda \gtrsim \textcolor{black}{n^{-\frac{b}{b+1}}},
    \end{split}
\end{equation*}
and part $(b)$ follows from combining $(a)$ with Lemma \ref{cordes} $(\text{Corde's inequality})$. For part $(c)$, note that
\begin{equation*}
    \begin{split}
        \|(\Lambda+\lambda I)^{-1}(\hat{\Lambda}_{n}+\lambda I)\|_{L^2 \to L^2} = & \|I - (\Lambda-\hat{\Lambda}_{n})(\Lambda+\lambda I)^{-1}\|_{L^2 \to L^2}\\
        \leq &  1 +  \|(\hat{\Lambda}_{n}- \Lambda)(\Lambda+\lambda I)^{-1}\|_{L^2 \to L^2}\\
        \leq & \frac{3}{2}, \quad \forall  \textcolor{black}{\lambda \gtrsim n^{-\frac{b}{b+1}}},
    \end{split}
\end{equation*}
and applying Lemma \ref{cordes} $(\text{Corde's inequality})$ yields the result.
\end{proof}

\textcolor{black}{
\begin{remark}
    From Lemma \ref{ncmdifference}, we have $$\|(\Lambda+\lambda I)^{-\frac{1}{2}}(\Lambda-\hat{\Lambda}_{n})(\Lambda+\lambda I)^{-\frac{1}{2}}\|_{L^2 \to L^2} \lesssim_{p} \frac{1}{\sqrt{n}} \lambda^{-\frac{1}{b}}.$$ Since $$\|(\Lambda + \lambda I)^{\frac{1}{2}} (\hat{\Lambda} + \lambda I)^{-\frac{1}{2}}\|_{L^2 \to L^2}^2 \leq \frac{1}{1-\|(\Lambda+\lambda I)^{-\frac{1}{2}}(\Lambda-\hat{\Lambda}_{n})(\Lambda+\lambda I)^{-\frac{1}{2}}\|_{L^2 \to L^2}},$$ we obtain
    \begin{equation*}
        \|(\Lambda + \lambda I)^{\frac{1}{2}} (\hat{\Lambda} + \lambda I)^{-\frac{1}{2}}\|_{L^2 \to L^2} \leq \sqrt{2} \quad \forall \lambda \gtrsim n^{-\frac{b}{2}}.
    \end{equation*}
   From a similar calculation as in part $(c)$ of Lemma \ref{Covariance estimation}, we can observe that
    \begin{equation*}
        \|(\Lambda + \lambda I)^{-\frac{1}{2}} (\hat{\Lambda} + \lambda I)^{\frac{1}{2}}\|_{L^2 \to L^2} \leq \frac{3}{2} \quad \forall \lambda \gtrsim n^{-\frac{b}{2}}.
    \end{equation*}
\end{remark}
}

\begin{lemma}\cite[Lemma 5.1]{cordes1987}
\label{cordes}
Suppose $T_1$ and $T_2$ are two positive bounded linear operators on a separable Hilbert space. Then
$$\|T_1^pT_2^p\| \leq \|T_1T_2\|^p, \text{ when } 0\leq p \leq 1. $$
\end{lemma}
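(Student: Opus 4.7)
The plan is to prove the inequality via Hadamard's three-lines theorem applied to an entry of the analytic operator-valued family $z \mapsto T_1^z T_2^z$, with the boundary norm on $\operatorname{Re}(z) = 1$ identified through a spectral-radius computation. I would first reduce to strictly positive $T_1, T_2$ by replacing each $T_i$ with $T_i + \epsilon I$; both sides of the inequality depend continuously on $\epsilon$ (by the continuous functional calculus applied to $\lambda \mapsto \lambda^p$ on any compact interval in $[0,\infty)$), so it suffices to prove the result in the invertible case and then let $\epsilon \downarrow 0$.

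Assuming $T_1, T_2$ are strictly positive, fix unit vectors $u, v$ and set $F(z) := \langle T_1^z T_2^z u, v\rangle$ on the closed strip $\{0 \le \operatorname{Re}(z) \le 1\}$. Invertibility makes $\log T_i$ a bounded self-adjoint operator, so $T_i^z = e^{z \log T_i}$ is operator-norm entire, and one checks that $\|T_1^z T_2^z\|$ is uniformly bounded on the strip; hence $F$ is bounded and analytic there. On $\operatorname{Re}(z) = 0$ each $T_i^{it}$ is unitary (since $it \log T_i$ is skew-adjoint), so $|F(it)| \le 1$. On $\operatorname{Re}(z) = 1$ the key identity is $\|T_1^{1+it} T_2^{1+it}\| = \|T_1 T_2\|$: expanding,
\[
\|T_1^{1+it} T_2^{1+it}\|^2 = \|T_2^{1-it} T_1^2 T_2^{1+it}\|,
\]
and the operator inside is positive self-adjoint (its quadratic form on $x$ is $\|T_1 T_2^{1+it} x\|^2$), so its norm equals its spectral radius. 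Applying $r(AB) = r(BA)$ with $A = T_2^{1-it} T_1$ and $B = T_1 T_2^{1+it}$ collapses the imaginary parts and gives $r(T_1 T_2^2 T_1) = \|T_1 T_2\|^2$, yielding $|F(1+it)| \le \|T_1 T_2\|$.

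Hadamard's three-lines theorem now yields $|F(p + it)| \le 1^{1-p}\|T_1 T_2\|^p = \|T_1 T_2\|^p$ throughout the strip. Taking $t=0$ and supremum over unit $u, v$ gives $\|T_1^p T_2^p\| \le \|T_1 T_2\|^p$, and undoing the $\epsilon$-regularization concludes the proof. The main obstacle is the boundary identity on $\operatorname{Re}(z) = 1$: one must observe that although the individual factors $T_2^{1\pm it}$ are non-self-adjoint for $t \ne 0$, their sandwich $T_2^{1-it} T_1^2 T_2^{1+it}$ is self-adjoint and positive, after which the $r(AB) = r(BA)$ trick exactly cancels the imaginary exponents against each other. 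Once that identity is in hand, analyticity on the strip, unitarity of $T_i^{it}$, and the three-lines estimate are all routine.
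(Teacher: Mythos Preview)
The paper does not supply a proof of this lemma; it is quoted verbatim from \cite{cordes1987} and used as a black box in Lemma~\ref{Covariance estimation}. So there is no ``paper's proof'' to compare against.

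Your three-lines argument is correct. The reduction to invertible $T_i$ via $T_i+\epsilon I$ is legitimate because $\lambda\mapsto\lambda^p$ is continuous on any compact interval of $[0,\infty)$, hence $(T_i+\epsilon I)^p\to T_i^p$ in operator norm. With $\log T_i$ bounded, $z\mapsto T_i^z$ is entire and $\|T_i^{x+iy}\|=\|T_i^x\|$ is bounded for $x\in[0,1]$, so $F$ satisfies the hypotheses of Hadamard's theorem. The boundary identity on $\operatorname{Re}(z)=1$ is the only nontrivial step, and your computation is sound: $T_2^{1-it}T_1^2T_2^{1+it}$ is indeed positive self-adjoint (it equals $(T_1T_2^{1+it})^*(T_1T_2^{1+it})$), so its norm equals its spectral radius, and $r(AB)=r(BA)$ with your choice of $A,B$ collapses it to $r(T_1T_2^2T_1)=\|T_1T_2\|^2$.

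For context, the other standard route is via the L\"owner--Heinz inequality (operator monotonicity of $\lambda\mapsto\lambda^p$ for $p\in[0,1]$), which avoids complex interpolation entirely; your approach is equally valid and arguably more self-contained.
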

\noindent
\begin{lemma}\cite[Lemma A.6]{balasubramanian2024functional}
\label{supppbound}
    For any $0 < \alpha \leq \beta$,
    $$\sup_{i\in \mathbb{N}}\left[\frac{i^{-\alpha}}{i^{-\beta}+\lambda}\right] \leq \lambda^{\frac{\alpha-\beta}{\beta}}, ~~ \forall ~\lambda >0.$$
\end{lemma}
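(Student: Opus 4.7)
The plan is to reduce the discrete supremum to a continuous one-variable estimate and then invoke a weighted AM--GM (Young) inequality. Set $p := \alpha/\beta \in (0,1]$ and substitute $x = i^{-\beta}$, so that $i^{-\alpha} = x^{p}$. Since $\{i^{-\beta} : i \in \mathbb{N}\} \subset (0,\infty)$,
\[
\sup_{i \in \mathbb{N}} \frac{i^{-\alpha}}{i^{-\beta} + \lambda} \;\le\; \sup_{x > 0} \frac{x^p}{x + \lambda},
\]
so it suffices to prove $x^p/(x+\lambda) \le \lambda^{p-1}$ for all $x,\lambda>0$.

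For $p = 1$ this is trivial since $x/(x+\lambda) \le 1 = \lambda^{0}$. For $p \in (0,1)$, Young's inequality applied to $x$ and $\lambda$ with weights $p$ and $1-p$ gives
\[
x^{p}\lambda^{1-p} \;\le\; p x + (1-p)\lambda \;\le\; x + \lambda,
\]
where the second step uses $p, 1-p \in (0,1)$ and $x,\lambda > 0$. Dividing through by $\lambda^{1-p}(x+\lambda)$ yields $x^p/(x+\lambda) \le \lambda^{p-1}$, and since $p - 1 = (\alpha-\beta)/\beta$, the desired bound follows.

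No serious obstacle is anticipated. The discrete-to-continuous relaxation is costless because every $i^{-\beta}$ is a positive real, and the resulting single-variable inequality is essentially a restatement of Young's inequality together with the elementary observation $p x + (1-p)\lambda \le x + \lambda$. As a sharpness check, one may verify that the critical point $x^{\ast} = p\lambda/(1-p)$ gives continuous supremum $(1-p)^{1-p} p^{p} \lambda^{p-1}$ with $(1-p)^{1-p} p^{p} \le 1$ on $[0,1]$ (this is the exponential of the negative binary entropy), so the stated estimate is tight up to a constant that is at most $1$.
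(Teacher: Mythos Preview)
Your proof is correct. The paper does not supply its own proof of this lemma; it simply quotes the result from \cite[Lemma A.6]{balasubramanian2024functional}. Your argument via the substitution $x=i^{-\beta}$ and the weighted AM--GM (Young) inequality $x^{p}\lambda^{1-p}\le px+(1-p)\lambda\le x+\lambda$ is clean and self-contained, and the sharpness remark about the constant $(1-p)^{1-p}p^{p}\le 1$ is a nice addition.
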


\begin{lemma}
\label{supbound}
Let $a,m,p,q$ and $l$ be positive numbers. Then for $r = \min \{p,\frac{lq}{m}+a\}$ and $a< p$, we have
    $$\sup_{i \in \mathbb{N}}\left[\frac{i^{-(p-a)m}}{(i^{-q}+\lambda)^l}\right] \leq \lambda^{\frac{(r-a)m-ql}{q}},~ \forall~ \lambda >0.$$
\end{lemma}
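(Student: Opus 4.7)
The plan is to reduce Lemma~\ref{supbound} to the already-proven Lemma~\ref{supppbound} by pulling the $l$-th power outside the supremum. Setting $\alpha := (p-a)m$ (which is positive since $a < p$) and $\beta := q$, the first step is to rewrite
$$\sup_{i\in\mathbb{N}} \frac{i^{-\alpha}}{(i^{-\beta}+\lambda)^l} = \left(\sup_{i\in\mathbb{N}} \frac{i^{-\alpha/l}}{i^{-\beta}+\lambda}\right)^l,$$
which is valid because the map $x \mapsto x^l$ is monotone nondecreasing on $[0,\infty)$. From here the argument splits according to which of the two candidate values attains the minimum defining $r$.

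In the regime $r = p$, equivalently $p \le \tfrac{lq}{m}+a$, equivalently $\alpha/l \le \beta$, I would invoke Lemma~\ref{supppbound} with the exponent pair $(\alpha/l,\beta)$; this is legitimate precisely because $\alpha/l > 0$ (from $a<p$) and $\alpha/l \le \beta$. The resulting bound $\lambda^{(\alpha/l-\beta)/\beta}$ on the inner supremum, raised to the $l$-th power, becomes $\lambda^{((p-a)m - lq)/q}$, which coincides with $\lambda^{((r-a)m-lq)/q}$ since $r = p$ here.

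In the complementary regime $r = \tfrac{lq}{m}+a$, equivalently $\alpha \ge l\beta$, the target exponent $(r-a)m - lq$ is exactly $0$, so the claim reduces to showing the supremum is at most $1$. For this I would use the elementary factorization
$$\frac{i^{-\alpha}}{(i^{-\beta}+\lambda)^l} = \frac{i^{\,l\beta-\alpha}}{(1+\lambda i^{\beta})^l},$$
and note that both factors on the right-hand side are bounded by $1$ for every $i \in \mathbb{N}$ and $\lambda > 0$: the numerator because $l\beta - \alpha \le 0$ and $i \ge 1$, the denominator because $1 + \lambda i^\beta \ge 1$.

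I do not anticipate a substantive obstacle; the lemma is essentially bookkeeping on exponents. The one point that needs care is matching the two conditions $\alpha/l \le \beta$ and $\alpha \ge l\beta$ to the two branches of the $\min$ defining $r$, and checking at the boundary $p = \tfrac{lq}{m}+a$ that both branches produce consistent bounds.
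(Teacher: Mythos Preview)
Your proposal is correct and follows essentially the same approach as the paper: pull the $l$-th power outside the supremum, then split into the two cases $p \le \tfrac{lq}{m}+a$ (apply Lemma~\ref{supppbound}) and $p > \tfrac{lq}{m}+a$ (show the supremum is at most $1$). Your handling of the second case via the factorization $\frac{i^{l\beta-\alpha}}{(1+\lambda i^\beta)^l}$ is in fact slightly cleaner than the paper's, which writes the bound as a product of two separate suprema.
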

\begin{proof} Consider
    \begin{equation*}
    \begin{split}
        \sup_{i \in \mathbb{N}}\left[\frac{i^{-(p-a)m}}{(i^{-q}+\lambda)^l}\right]  = & \left(\sup_{i \in \mathbb{N}}\left[\frac{i^{-(p-a)\frac{m}{l}}}{i^{-q}+\lambda}\right] \right)^l.
    \end{split}
    \end{equation*}
If $p \leq \frac{lq}{m}+a$, i.e., $(p-a)\frac{m}{l} \leq q$, then it follows from Lemma \ref{supppbound} that
\begin{equation*}
    \sup_{i \in \mathbb{N}}\left[\frac{i^{-(p-a)m}}{(i^{-q}+\lambda)^l}\right] \leq \lambda^{\frac{(p-a)m-ql}{q}}.
\end{equation*}
If $p > \frac{lq}{m}+a$, i.e., $(p-a)\frac{m}{l} > q$, then 
\begin{equation*}
\begin{split}
    \sup_{i \in \mathbb{N}}\left[\frac{i^{-(p-a)m}}{(i^{-q}+\lambda)^l}\right] = & \left(\sup_{i \in \mathbb{N}}\left[\frac{i^{-q}}{(i^{-q}+\lambda)}\right]\right)^l \left(\sup_{i \in \mathbb{N}}\left[i^{-\left(\frac{(p-a)m}{l}-q\right)}\right]\right)^l\leq 1,
    \end{split}
\end{equation*}
where the last step follows from Lemma \ref{supppbound} and the fact that $\frac{(p-a)m}{l}-q >0$.
\end{proof}

\begin{lemma}
\label{seriessum}
    For $\alpha >1$, $\beta >1,$ and $q \geq \frac{\alpha}{\beta}$, we have 
    $$\sum_{i\in \mathbb{N}}\frac{i^{-\alpha}}{(i^{-\beta}+\lambda)^q} \lesssim \lambda^{-\frac{1+\beta q -\alpha}{\beta}}, ~~ \forall ~\lambda >0.$$
\end{lemma}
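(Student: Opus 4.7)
\textbf{Proof plan for Lemma \ref{seriessum}.} The natural strategy is to split the sum at the crossover index where the two summands of $i^{-\beta} + \lambda$ balance, namely $i \asymp \lambda^{-1/\beta}$. Concretely, I would set $N := \lceil \lambda^{-1/\beta} \rceil$ and write
\begin{equation*}
\sum_{i \in \mathbb{N}} \frac{i^{-\alpha}}{(i^{-\beta}+\lambda)^q}
= \underbrace{\sum_{i=1}^{N} \frac{i^{-\alpha}}{(i^{-\beta}+\lambda)^q}}_{S_1}
+ \underbrace{\sum_{i=N+1}^{\infty} \frac{i^{-\alpha}}{(i^{-\beta}+\lambda)^q}}_{S_2},
\end{equation*}
then estimate each piece by replacing $i^{-\beta}+\lambda$ by whichever of its two terms dominates on the relevant range.

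For $S_1$ (the head), when $i \le N$ one has $i^{-\beta} \ge \lambda$, hence $i^{-\beta}+\lambda \ge i^{-\beta}$, so $(i^{-\beta}+\lambda)^q \ge i^{-\beta q}$ and each summand is at most $i^{\beta q - \alpha}$. Because the hypothesis $q \ge \alpha/\beta$ gives $\beta q - \alpha \ge 0$, the standard integral comparison yields
\begin{equation*}
S_1 \le \sum_{i=1}^N i^{\beta q - \alpha} \lesssim N^{\beta q - \alpha + 1} \asymp \lambda^{-\frac{1+\beta q - \alpha}{\beta}}.
\end{equation*}
For $S_2$ (the tail), when $i > N$ one has $i^{-\beta} \le \lambda$, so $(i^{-\beta}+\lambda)^q \ge \lambda^q$, and using $\alpha > 1$ to guarantee convergence of the tail of $\sum i^{-\alpha}$,
\begin{equation*}
S_2 \le \lambda^{-q} \sum_{i > N} i^{-\alpha} \lesssim \lambda^{-q} N^{1-\alpha} \asymp \lambda^{-q + (\alpha-1)/\beta} = \lambda^{-\frac{1+\beta q - \alpha}{\beta}}.
\end{equation*}
Adding the two bounds gives the claimed estimate.

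I don't foresee any serious obstacle: the only mildly delicate point is making sure the integral comparison for $S_1$ remains valid in the boundary case $\beta q - \alpha = 0$ (which reduces to $\sum_{i=1}^N 1 = N \asymp \lambda^{-1/\beta}$, matching the required exponent), and checking the large-$\lambda$ regime $\lambda \ge 1$, where $N=1$ so $S_1$ is a single term and $S_2 \lesssim \lambda^{-q}$; since $\alpha > 1$, one has $\lambda^{-q} \le \lambda^{-(1+\beta q - \alpha)/\beta}$ in this regime as well, so the bound holds uniformly in $\lambda > 0$. No additional tools beyond elementary estimates and the hypotheses $\alpha > 1$, $\beta > 1$, $q \ge \alpha/\beta$ are needed.
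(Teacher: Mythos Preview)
Your proof is correct, and the edge cases (the boundary $\beta q=\alpha$ and the regime $\lambda\ge 1$) are handled properly. One cosmetic remark: the claim ``when $i\le N$ one has $i^{-\beta}\ge\lambda$'' can fail for $i=N=\lceil\lambda^{-1/\beta}\rceil$, but you never actually use it---the inequality $i^{-\beta}+\lambda\ge i^{-\beta}$ that you invoke is trivial.

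Your route is genuinely different from the paper's. The paper rewrites the summand as $i^{q\beta-\alpha}/(1+\lambda i^\beta)^q$, bounds the sum by the integral $\int_0^\infty x^{q\beta-\alpha}(1+\lambda x^\beta)^{-q}\,dx$, and then substitutes $y=\lambda^{1/\beta}x$ to extract the factor $\lambda^{-(1+\beta q-\alpha)/\beta}$ times a $\lambda$-free integral, which it shows is finite by splitting at $y=1$. Your approach instead splits the sum directly at the balance point $N\asymp\lambda^{-1/\beta}$ and uses only elementary power-sum estimates. The paper's method is slicker once one accepts the sum-to-integral comparison (which is not entirely trivial here since the integrand is unimodal rather than monotone), and the substitution makes the exponent fall out automatically; your method is more elementary and self-contained, at the cost of tracking the two regimes and the large-$\lambda$ case by hand.
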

\begin{proof}
Consider
\begin{equation*}
        \sum_{i \in \mathbb{N}} \frac{i^{-\alpha}}{(i^{-\beta}+\lambda)^{q}} = \sum_{i \in \mathbb{N}} \frac{i^{q\beta-\alpha}}{(1+\lambda i^{\beta})^{q}}  \lesssim \int_{0}^{\infty} \frac{x^{q \beta-\alpha}}{(1+\lambda x^{\beta})^{q}}dx
        =  \lambda^{-\frac{1+q \beta -\alpha }{\beta}} \int_{0}^{\infty} \frac{y^{q \beta-\alpha}}{(1+y^{\beta})^{q}} dy.
\end{equation*}
Note that
\begin{equation*}
    \begin{split}
        \int_{0}^{\infty} \frac{y^{q \beta-\alpha}}{(1+y^{\beta})^{q}} dy =& \int_{0}^{\infty} \frac{(y^{\beta})^{q-\frac{\alpha}{\beta}}}{(1+y^{\beta})^{q-\frac{\alpha}{\beta}}(1+y^{\beta})^{\frac{\alpha}{\beta}}}dy 
        \leq  \int_{0}^{\infty} \frac{1}{(1+y^{\beta})^{\frac{\alpha}{\beta}}} dy\\
        = & \int_{0}^{1} \frac{1}{(1+y^{\beta})^{\frac{\alpha}{\beta}}} dy +\int_{1}^{\infty} \frac{1}{(1+y^{\beta})^{\frac{\alpha}{\beta}}} dy. 
    \end{split}
\end{equation*}
For $y\ge 1$, $\frac{1}{1+y^\beta} \le \frac{1}{y^\beta}$ and therefore, $\int_{1}^{\infty} \frac{1}{(1+y^{\beta})^{\frac{\alpha}{\beta}}} dy \le \int^\infty_1 y^{-\alpha}\,dy=\frac{1}{\alpha-1}$. On the other hand, $\int_{0}^{1} \frac{1}{(1+y^{\beta})^{\frac{\alpha}{\beta}}} dy\le 1$. 
The result therefore follows.
\end{proof}

\begin{theorem}[Karhunen-Lo\`{e}ve Expansion]\cite[Corollary 7]{RKHSinprob2004stats}
\label{KLrep}
   \textcolor{black}{ Let $X_{t}$ be a zero-mean square-integrable stochastic process defined over a probability space $(\Omega, \mathcal{F}, P)$ and indexed over a closed and bounded interval $[a, b]$, with continuous covariance function $K_{X}(s, t)$. Then $K_{X}(s, t)$ is a Mercer kernel and letting $e_{k}$ be an orthonormal basis on $L^2([a, b])$ formed by the eigenfunctions of  $T_{K_{X}}: L^2([a,b])\rightarrow L^2([a,b]),\,f\mapsto \int^b_a K_X(\cdot,t)f(t)\,dt$ with respective eigenvalues $\lambda_{k}$, $X_{t}$ admits the following representation
    $$X_{t} = \sum _{k=1}^{\infty }Z_{k}e_{k}(t),$$
where the convergence is in $L^2$, uniform in $t$ and
$$Z_{k}=\int _{a}^{b}X_{t}e_{k}(t)dt.$$
Furthermore, the random variables $Z_k$ have zero mean, are uncorrelated and have variance $\lambda_{k}$, i.e., 
$$\mathbf {E} [Z_{k}]=0,~\forall k\in \mathbb {N} \qquad {\mbox{and}}\qquad \mathbf {E} [Z_{i}Z_{j}]=\delta _{ij}\lambda _{j},~\forall i,j\in \mathbb {N}.$$}
\end{theorem}
\textcolor{black}{\section{Kullback-Leibler divergence formula}\label{subsec:kl}
    Let $P_{\theta}, P_{\theta'}$ be joint probability distribution of $\{(X_{i}, Y_{i}): 1 \leq i \leq n \}$ for $\beta^* = f_{\theta}$ and $\beta^* = f_{\theta'}$ respectively. Let $\epsilon$ follow a normal distribution with 0 mean and $\sigma^2$ variance. Then $P_{\theta}(Y|X) \sim \mathcal{N}(\langle X, f_{\theta}\rangle_{L^2}, \sigma^2)$ and $P_{\theta'}(Y|X) \sim \mathcal{N}(\langle X, f_{\theta'}\rangle_{L^2}, \sigma^2)$. So
    \begin{equation*}
        \begin{split}
\log\left(\frac{P_{\theta}}{P_{\theta'}}\right) = & \log\left(\frac{P_{\theta}(Y|X)}{P_{\theta'}(Y|X)}\right) \\
            = & \frac{1}{2 \sigma^2}\sum_{i=1}^{n} [(Y_{i}-\langle X_{i}, f_{\theta'} \rangle_{L^2})^2 - (Y_{i}-\langle X_{i}, f_{\theta} \rangle_{L^2})^2]\\
            = & \frac{1}{2 \sigma^2}\sum_{i=1}^{n}[\langle X_{i}, f_{\theta}-f_{\theta'} \rangle_{L^2}(2Y_{i}-\langle X_{i}, f_{\theta}+f_{\theta'} \rangle_{L^2})]\\
            = & \frac{1}{\sigma^2} \sum_{i=1}^{n}[Y_{i}\langle X_{i}, f_{\theta}-f_{\theta'} \rangle_{L^2}] - \frac{1}{2 \sigma^2}\sum_{i=1}^{n}[\langle X_{i}, f_{\theta}-f_{\theta'} \rangle_{L^2}\langle X_{i}, f_{\theta}+f_{\theta'} \rangle_{L^2}]
            \end{split}
        \end{equation*}
        \begin{equation*}
        \begin{split}
            =& \frac{1}{\sigma^2} \sum_{i=1}^{n}[(Y_{i}-\langle X_{i}, f_{\theta} \rangle_{L^2})\langle X_{i}, f_{\theta}-f_{\theta'} \rangle_{L^2}] + \frac{1}{2 \sigma^2}\sum_{i=1}^{n}\langle X_{i}, f_{\theta}- f_{\theta'}\rangle_{L^2}^2,
        \end{split}
    \end{equation*}
yielding
\begin{equation*}
    \begin{split}
        \mathcal{K}(P_{\theta}, P_{\theta'}) = & \int\log\left(\frac{P_{\theta}}{P_{\theta'}}\right)dP_{\theta}\\
        = & \frac{n}{2 \sigma^2} \mathbb{E}[\langle X, f_{\theta}- f_{\theta'}\rangle_{L^2}^2] = \frac{n}{2 \sigma^2} \|C^{\frac{1}{2}}(f_{\theta}-f_{\theta'})\|_{L^2}^2.
        \end{split}
\end{equation*}}

\section{Versions of Lemmas \ref{cmdifference} and \ref{ncmdifference} under the boundedness of $X$}\label{equivalentbounds}
In this appendix, we provide the equivalent versions of Lemmas~\ref{cmdifference} and \ref{ncmdifference} under the boundedness assumption on $X$.
\begin{lemma}
\label{cmdifnew}
    Suppose $T$ and $C$ commute and Assumptions \ref{as:1} and  \ref{as:3} hold. Also assume that $\sup_{\omega \in \Omega}\|X(\cdot, \omega)\|_{L^2} < \infty$. Then for all choices of $p$ such that $2p (t+c) \geq 2 \alpha t +c$, we have
    \begin{equation*}
    \|(\Lambda + \lambda I)^{-p}T^{\alpha}(C-\hat{C}_{n})T^{\alpha}(\Lambda + \lambda I)^{-p}\|_{L^2 \to L^2} \lesssim_{p} \frac{\lambda^{-2p}}{n} \lambda^{\frac{\alpha t}{t+c}} + \frac{1}{\sqrt{n}} \lambda^{-\frac{1+4p(t+c)-(4 \alpha t +c)}{2(t+c)}}.
\end{equation*}
\end{lemma}
\begin{proof}
Consider
    \begin{align}\label{Eq:ttt}
         &\|(\Lambda + \lambda I)^{-p}T^{\alpha}(C-\hat{C}_{n})T^{\alpha}(\Lambda + \lambda I)^{-p}\|_{L^2 \to L^2}\nonumber\\
        &  \qquad \qquad \leq \|(\Lambda + \lambda I)^{-p}T^{\alpha}(C-\hat{C}_{n})\|_{L^2 \to L^2}\|T^{\alpha}(\Lambda + \lambda I)^{-p}\|_{L^2 \to L^2}\nonumber\\
        & \qquad \qquad \leq \lambda^{\frac{\alpha t - p(t+c)}{t+c}}  \|(\Lambda + \lambda I)^{-p}T^{\alpha}(C-\hat{C}_{n})\|_{L^2 \to L^2},
    \end{align}
    where last step follows from the fact that
    \begin{equation*}
        \begin{split}
            \|T^{\alpha}(\Lambda + \lambda I)^{-p}\|_{L^2 \to L^2} = \sup_{i}\left|\frac{\mu_{i}^{\alpha}}{(\mu_{i}\xi_{i} +\lambda)^p}\right| \lesssim \sup_{i}\left|\frac{i^{-\frac{\alpha t}{p}}}{i^{-(t+c)}+\lambda}\right|^p \leq \lambda^{\frac{\alpha t - p(t+c)}{t+c}}.
        \end{split}
    \end{equation*}
    Let $\xi = (\Lambda + \lambda I)^{-p}T^{\alpha} X \otimes X$ be a random element in $HS(L^2)$, the Hilbert space of Hilbert–Schmidt operators on $L^2$, with inner product $\langle A, B \rangle_{HS} = \text{Tr}(A^{\top}B)$. Considering the $HS$-norm of $\xi$, we have
\begin{equation*}
    \begin{split}
        \|\xi\|_{HS}^2 = &  \sum_{m \geq 1} \|(\Lambda + \lambda I)^{-p}T^{\alpha} \langle X, \phi_{m}\rangle_{L^2} X\|_{L^2}^2\\
        \leq & \|(\Lambda + \lambda I)^{-p}\|^2_{L^2 \mapsto L^2} \|T^{\alpha}X\|_{L^2}^2 \sum_{m}|\langle X, \phi_{m} \rangle_{L^2}|^2
        \lesssim  \lambda^{-2p}.
    \end{split}
\end{equation*}
Next, using the representation $X = \sum_{k \geq 1} \langle X, \phi_{k}\rangle_{L^2} \phi_{k}$ from Lemma~\ref{KLrep} in 
\begin{equation*}
        \|\xi\|_{HS}^2 =  \sum_{m \geq 1} \|(\Lambda + \lambda I)^{-p}T^{\alpha} \langle X, \phi_{m} \rangle_{L^2} X\|_{L^2}^2,
\end{equation*}
we have, 
\begin{align*}
    \begin{split}
        \|\xi\|_{HS}^2 &=\|(\Lambda + \lambda I)^{-p}T^{\alpha}  X\|_{L^2}^2\sum_{m \geq 1} \langle X, \phi_{m} \rangle^2_{L^2} \\
        &=  \|X\|_{L^2}^2 \left \|\sum_{k\geq 1} \frac{\mu_{k}^{\alpha} \langle X, \phi_{k} \rangle_{L^2} \phi_{k}}{(\mu_{k} \xi_{k} +\lambda)^p} \right\|^2_{L^2}
        \lesssim  \sum_{k \geq 1} \frac{\mu_{k}^{2 \alpha} \langle X, \phi_{k} \rangle^2_{L^2}}{(\mu_{k} \xi_{k} +\lambda)^{2p}}.
    \end{split}
\end{align*}
By taking expectations on both sides, we get
\begin{equation*}
    \begin{split}
        \mathbb{E}[ \|\xi\|_{HS}^2] \lesssim & \sum_{k \geq 1} \frac{\mu_{k}^{2 \alpha} \mathbb{E}\langle X \otimes X \phi_{k}, \phi_{k} \rangle}{(\mu_{k} \xi_{k} +\lambda)^{2p}}\\
        = &  \sum_{k \geq 1} \frac{\mu_{k}^{2 \alpha} \langle C \phi_{k}, \phi_{k} \rangle}{(\mu_{k} \xi_{k} +\lambda)^{2p}} =  \sum_{k \geq 1} \frac{\mu_{k}^{2 \alpha} \xi_{k}}{(\mu_{k} \xi_{k} +\lambda)^{2p}}\\
        \lesssim & \sum_{k \geq 1} \frac{k^{-2\alpha t }k^{-c}}{(k^{-(t+c)}+\lambda)^{2p}} \lesssim \lambda^{-\frac{1+2p(t+c)-(2 \alpha t+ c )}{t+c}}.
    \end{split}
\end{equation*}

Then it follows from \cite[Theorem 6.14]{SVM2008steinwart} that
$$\|(\Lambda + \lambda I)^{-p}T^{\alpha}(C - \hat{C}_{n})\|_{L^2 \mapsto L^2} \lesssim_{p} \frac{\lambda^{-p}}{n} + \frac{1}{\sqrt{n}}\lambda^{-\frac{1+2p(t+c)-(2 \alpha t +c)}{2(t+c)}},$$
using which in \eqref{Eq:ttt} yields the result.
\end{proof}

\vspace{5mm}
\begin{lemma}
\label{ncmdiffnew}
    Suppose Assumptions \ref{as:2} and
    \ref{as:4} hold. Also assume that $\sup_{\omega \in \Omega}\|X(\cdot, \omega)\|_{L^2} < \infty$. Then for $p\geq \frac{1}{2}$, we have
    \begin{equation*}
    \|(\Lambda+\lambda I)^{-p}(\Lambda-\hat{\Lambda}_{n})(\Lambda+\lambda I)^{-p}\|_{L^2 \to L^2} \lesssim_{p} \frac{\lambda^{-2p}}{n} + \frac{1}{\sqrt{n}} \lambda^{-\frac{1+4bp - b}{2b}}.
\end{equation*}
\end{lemma}
\begin{proof}
Consider
\begin{equation*}
    \|(\Lambda+\lambda I)^{-p}(\Lambda-\hat{\Lambda}_{n})(\Lambda+\lambda I)^{-p}\|_{L^2 \to L^2} \leq \frac{1}{\lambda^{p}} \|(\Lambda+\lambda I)^{-p}(\Lambda-\hat{\Lambda}_{n})\|_{L^2 \to L^2}.
\end{equation*}
Let $\xi = (\Lambda+\lambda I)^{-p} T^{\frac{1}{2}}X \otimes T^{\frac{1}{2}}X$ and consider its $HS$-norm,
\begin{equation*}
    \|\xi\|_{HS}^2 = \sum_{m} \|(\Lambda+\lambda I)^{-p} \langle T^{\frac{1}{2}}X , \phi_{m}\rangle_{L^2} T^{\frac{1}{2}}X\|^2_{L^2} \leq \frac{\|T^{\frac{1}{2}}X\|^4}{\lambda^{2p}}.
\end{equation*}
Next, using the representation $T^{\frac{1}{2}}X = \sum_{k \geq 1} \langle T^{\frac{1}{2}}X, e_{k} \rangle e_{k}$ from Lemma \ref{KLrep} in
\begin{equation*}
    \|\xi\|_{HS}^2 = \sum_{m\ge 1} \|(\Lambda+\lambda I)^{-p} \langle T^{\frac{1}{2}}X , \phi_{m}\rangle_{L^2} T^{\frac{1}{2}}X\|^2_{L^2},
\end{equation*}
we have
\begin{equation*}
    \begin{split}
        \|\xi\|_{HS}^2 = \|T^{\frac{1}{2}}X\|^2_{L^2} \left\|\sum_{k\geq 1} \frac{\langle T^{\frac{1}{2}}X, e_{k} \rangle_{L^2} e_{k}}{(\tau_{k} + \lambda)^{p}}\right\|^2 \lesssim \sum_{k \geq 1} \frac{\langle T^{\frac{1}{2}}X, e_{k}\rangle^2_{L^2}}{(\tau_{k} + \lambda)^{2p}}.
    \end{split}
\end{equation*}
By taking expectations on both sides, we get
\begin{equation*}
        \mathbb{E}[\|\xi\|_{HS}^2] \lesssim  \sum_{k \geq 1} \frac{\tau_{k}}{(\tau_{k} + \lambda)^{2p}} \lesssim \sum_{k \geq 1} \frac{k^{-b}}{(k^{-b}+\lambda)^{2p}}
        \lesssim  \lambda^{-\frac{1+2pb-b}{b}}.
\end{equation*}
The result therefore follows from  \cite[Theorem 6.14]{SVM2008steinwart}.
\end{proof}
\end{document}